\newcommand{\prob}{\mathbb{P}}
\newcommand{\Prob}[1]{\prob\left(#1\right)}
\newcommand{\Probn}[1]{\prob_n\left(#1\right)}
\newcommand{\expec}{\mathbb{E}}
\newcommand{\Exp}[1]{\expec\left[#1\right]}
\newcommand{\Expn}[1]{\expec_n\left[#1\right]}
\newcommand{\Var}[1]{\textup{Var}\left(#1\right)}
\newcommand{\Varn}[1]{\textup{Var}_n\left(#1\right)}
\newcommand{\plim}{\ensuremath{\stackrel{\sss{\prob}}{\longrightarrow}}}
\newcommand{\ind}[1]{\mathbbm{1}_{\left\{#1\right\}}}
\newcommand{\bigO}[1]{O\left(#1\right)}
\newcommand{\bigOp}[1]{O_{\sss\prob}\left(#1\right)}
\newcommand{\bigOps}{O_{\sss\prob}}
\newcommand{\sss}[1]{\scriptscriptstyle{#1}}
\newcommand\abs[1]{\left|#1\right|}
\newcommand{\me}{\textup{e}}
\newcommand{\dd}{{\rm d}}
\newcommand{\cj}[1]{\textcolor{black}{#1}}
\newcommand{\op}{o_{\sss\prob}}
\newcommand{\gind}{{\sss{(\mathrm{ind})}}}
\newcommand{\gsub}{{\sss{(\mathrm{sub})}}}
\newcommand{\ECMn}{{\rm ECM}^{\sss{(n)}}}
\newcommand{\CMnD}{{\rm CM}_n(\boldsymbol{D})}
\newcommand{\ECMnD}{{\rm ECM}^{\sss{(n)}}(\boldsymbol{D})}
\newcommand{\Nn}{N^{\sss(n)}}
\newcommand{\Mn}{M^{\sss(n)}}
\newcommand{\Ecal}{{\mathcal{E}}}
\numberwithin{equation}{section}
\newcommand*{\swap}[2]{#2#1}
\newcommand{\cs}[1]{{\color{black}#1}}
\newcommand{\eqn}[1]{\begin{equation}#1\end{equation}}
\newcommand{\eqan}[1]{\begin{align}#1\end{align}}
\newtheorem{theorem}{Theorem}[section]
\newtheorem{lemma}[theorem]{Lemma}
\numberwithin{equation}{section}
\begin{document}
	
	\begin{frontmatter}
		\title{Optimal subgraph structures in scale-free configuration models}
		\runtitle{Optimal subgraph structures in configuration models}
		
		\begin{aug}
			\author{\fnms{Remco} \snm{van der Hofstad}\thanksref{t1,t2}\ead[label=e1]{}},
			\author{\fnms{Johan S. H.} \snm{van Leeuwaarden}\thanksref{t1,t3}\ead[label=e1]{}}
			\and
			\author{\fnms{Clara} \snm{Stegehuis}\corref{}\thanksref{t1,t4}
				\ead[label=e3]{c.stegehuis@utwente.nl}}		
			\thankstext{t1}{Supported by NWO TOP grant 613.001.451}
			\thankstext{t2}{Also supported by NWO Gravitation Networks grant 024.002.003 and NWO VICI grant 639.033.806.}
			\thankstext{t3}{Also supported by NWO Gravitation Networks grant 024.002.003, an NWO TOP-GO grant and an ERC Starting Grant}
			\runauthor{R. van der Hofstad, J. van Leeuwaarden and C. Stegehuis}
			
			\affiliation{Eindhoven University of Technology\thanksmark{t2}, Tilburg University\thanksmark{t3} and Twente University\thanksmark{t4}}
			
		\end{aug}
		
		\begin{abstract}
				Subgraphs reveal information about the geometry and functionalities of complex networks. For scale-free networks with unbounded degree fluctuations, 
			we obtain the asymptotics of the number of times a small connected graph  occurs as a subgraph or as an induced subgraph. We obtain these results by analyzing the configuration model with degree exponent $\tau\in(2,3)$ and introducing a novel class of optimization problems. For any given subgraph, the unique optimizer describes the degrees of the vertices that together span the subgraph. 
			We find that subgraphs typically occur between vertices with specific degree ranges. In this way, we can count and characterize {\it all} subgraphs. We refrain from double counting in the case of multi-edges, essentially counting the subgraphs in the {\it erased} configuration model. 		\end{abstract}
		
		\begin{keyword}[class=MSC]
			\kwd{05C80}
			\kwd{05C82}
		\end{keyword}
		
		\begin{keyword}
			\kwd{random graphs}
			\kwd{configuration model}
			\kwd{motifs}
			\kwd{subgraphs}
		\end{keyword}
		
	\end{frontmatter}

\section{Introduction}

Scale-free networks often have degree distributions  that follow power laws
with exponent $\tau\in(2,3)$ \cite{albert1999,faloutsos1999,jeong2000,vazquez2002}. Many networks have been reported to satisfy these conditions,  including 
metabolic networks, the internet and social networks. Scale-free networks come with the presence of hubs, i.e., vertices of extremely high degrees. 

Another property of real-world scale-free networks is that the clustering coefficient (the probability that two uniformly chosen neighbors of a vertex are neighbors themselves) decreases with the vertex degree~\cite{boguna2003,colomer2012,maslov2004,stegehuis2017,vazquez2002}, again following a power law. Thus, two neighbors of a hub are less likely to connect.
The triangle is the most studied network subgraph, because it not only describes the clustering coefficient, but also signals hierarchy and community structure~\cite{ravasz2003}.
However, other subgraphs such as larger cliques are equally important for understanding network organization \cite{benson2016,tsourakakis2017}. Indeed, subgraph counts might vary considerably across different networks \cite{milo2004,milo2002,wuchty2003} and any given network may have a set of 
statistically significant subgraphs (also called motifs). 
Statistical relevance can be expressed by comparing a real-world network to some mathematically tractable model. This comparison filters out the effect of the degree sequence and the network size on the subgraph count. A popular statistic takes the subgraph count, 
subtracts the expected number of subgraphs in a model, and divides by the standard deviation in the model~\cite{gao2017,milo2004,onnela2005}. Such a standardized test statistic sheds light on whether a subgraph is overrepresented in comparison to the model. 
This raises the question of what model to use. A natural candidate is the uniform simple graph with the same degrees as the original network. 

For $\tau>3$, when the degree distribution has a finite second moment, it is easy to generate such graphs using the configuration model, a random graph model that creates random graphs with any given degree sequence~\cite{bollobas1980,hofstad2009}. For  $\tau\in(2,3)$, however, the configuration model fails to create simple graphs with high probability~\cite{janson2009c}.
We therefore consider the erased configuration model~\cite{britton2006}\cite[Chapter 7]{hofstad2009},  which constructs a configuration model and then removes all self-loops and merges multiple edges. For an erased configuration model with degree exponent $\tau\in(2,3)$, we count how often a small connected graph $H$ occurs as a subgraph or as an induced subgraph, where edges not present in $H$ are also not allowed to be present in the subgraph. 

We find that every (induced) subgraph $H$, typically occurs between vertices in the erased configuration model with degrees in highly specific ranges that depend on the precise subgraph $H$. An example of these typical degree ranges for subgraphs on 4 vertices is shown in Figure~\ref{fig:graphlet4} (which will be discussed in more detail in Section~\ref{sec:motif45}).
In this paper we show that many subgraphs consist exclusively of $\sqrt{n}$-degree vertices, including cliques of all sizes. Hence, in such subgraphs, hubs (of degree close to the maximal value $n^{1/(\tau-1)}$) are unlikely to participate in a typical subgraph. Hubs can be part, however, of other subgraphs. We define optimization problems that find these optimal degree ranges for every subgraph. 

We next define the model.
\begin{figure}[tb]
	\definecolor{mycolor1}{RGB}{230,37,52}%
	\tikzstyle{every node}=[circle,fill=black!25,minimum size=8pt,inner sep=0pt]
	\tikzstyle{S1}=[fill=mycolor1!60]
	\tikzstyle{S2}=[fill=mycolor1!60!black]
	\tikzstyle{S3}=[fill=mycolor1]
	\tikzstyle{n1}=[fill=mycolor1!20]
	\begin{subfigure}[t]{0.16\linewidth}
		\centering
		\begin{tikzpicture}
		\tikzstyle{edge} = [draw,thick,-]
		\node[S3] (a) at (0,0) {};
		\node[S3] (b) at (1,0) {};
		\node[S3] (c) at (0,1) {};
		\node[S3] (d) at (1,1) {};
		\draw[edge] (a)--(b);
		\draw[edge] (c)--(b);
		\draw[edge] (d)--(b);
		\draw[edge] (a)--(c);
		\draw[edge] (a)--(d);
		\draw[edge] (c)--(d);
		\end{tikzpicture}	
		\caption{$n^{6-2\tau}$}
		\label{fig:K4}
	\end{subfigure}
	\begin{subfigure}[t]{0.16\linewidth}
		\centering
		\begin{tikzpicture}
		\tikzstyle{edge} = [draw,thick,-]
		\node (a) at (0,0) {};
		\node (b) at (1,0) {};
		\node (c) at (0,1) {};
		\node (d) at (1,1) {};
		\draw[edge] (a)--(b);
		\draw[edge] (c)--(b);
		\draw[edge] (d)--(b);
		\draw[edge] (a)--(c);
		\draw[edge] (c)--(d);
		\end{tikzpicture}	
		\caption{$n^{6-2\tau}\log(n)$}
		\label{fig:squareextra}
	\end{subfigure}
	\begin{subfigure}[t]{0.16\linewidth}
		\centering
		\begin{tikzpicture}
		\tikzstyle{edge} = [draw,thick,-]
		\node[S3] (a) at (0,0) {};
		\node[S3] (b) at (1,0) {};
		\node[S3] (c) at (0,1) {};
		\node[S3] (d) at (1,1) {};
		\draw[edge] (a)--(b);
		\draw[edge] (d)--(b);
		\draw[edge] (a)--(c);
		\draw[edge] (c)--(d);
		\end{tikzpicture}	
		\caption{$n^{6-2\tau}$}
		\label{fig:square}
	\end{subfigure}
	\begin{subfigure}[t]{0.16\linewidth}
		\centering
		\begin{tikzpicture}
		\tikzstyle{every node}=[circle,fill=black!25,minimum size=8pt,inner sep=0pt]
		\tikzstyle{edge} = [draw,thick,-]
		\node[n1] (a) at (0,0) {};
		\node[S1] (b) at (1,0) {};
		\node[S2] (c) at (0,1) {};
		\node[S1] (d) at (1,1) {};
		\draw[edge] (c)--(b);
		\draw[edge] (d)--(b);
		\draw[edge] (a)--(c);
		\draw[edge] (c)--(d);
		\end{tikzpicture}	
		\caption{$n^{7-2\tau-\frac{1}{\tau-1}}$}
		\label{fig:paw}
	\end{subfigure}
	\begin{subfigure}[t]{0.16\linewidth}
		\centering
		\begin{tikzpicture}
		\tikzstyle{every node}=[circle,fill=black!25,minimum size=8pt,inner sep=0pt]
		\tikzstyle{edge} = [draw,thick,-]
		\node[n1] (a) at (0,0) {};
		\node[n1] (b) at (1,0) {};
		\node[S2] (c) at (0,1) {};
		\node[n1] (d) at (1,1) {};
		\draw[edge] (c)--(b);
		\draw[edge] (a)--(c);
		\draw[edge] (c)--(d);
		\end{tikzpicture}	
		\caption{$n^{\frac{3}{\tau-1}}$}
		\label{fig:wedge4}
	\end{subfigure}
	\begin{subfigure}[t]{0.16\linewidth}
		\centering
		\begin{tikzpicture}
		\tikzstyle{every node}=[circle,fill=black!25,minimum size=8pt,inner sep=0pt]
		\tikzstyle{edge} = [draw,thick,-]
		\node[n1] (a) at (0,0) {};
		\node[n1] (b) at (1,0) {};
		\node (c) at (0,1) {};
		\node (d) at (1,1) {};
		\draw[edge] (d)--(b);
		\draw[edge] (a)--(c);
		\draw[edge] (c)--(d);
		\end{tikzpicture}	
		\caption{$n^{4-\tau}\log(n)$}
	\end{subfigure}

\vspace{-0.5cm}
\begin{subfigure}{\linewidth}
	\centering
	\begin{tikzpicture}
	\node[S2,label={[label distance=0.05cm]0:$n^{1/(\tau-1)}$}] (a) at (5,0) {};
	\node[S3,label={[label distance=0cm]0:$\sqrt{n}$}] (b) at (3.6,0) {};
	\node[S1,label={[label distance=0.05cm]0:$n^{(\tau-2)/(\tau-1)}$}] (c) at (1,0) {};
	\node[n1,label={[label distance=0.05cm]0:$1$}] (c) at (0,0) {};
	\node[label={[label distance=0.05cm]0:non-unique}] (d) at (7,0) {};
	\end{tikzpicture}
\end{subfigure}
\vspace{-0.8cm}
\caption{Optimal structures and asymptotic counts of induced subgraphs on 4 vertices. The vertex colors indicate the typical degrees and the scaling of the number of subgraphs is given below the pictures. Our results do not apply to the gray vertices.}
	\label{fig:graphlet4}
\end{figure}

	\paragraph{The erased configuration model.}
Let $[n]=\{1,2,\ldots,n\}$. Given a \emph{degree sequence}, i.e., a sequence of $n$ positive integers $\boldsymbol{D}=(D_1,D_2,\ldots,D_n)$, the \emph{configuration model} is a (multi)graph with vertex set $[n]$, where vertex $v\in[n]$ has degree $D_v$. It is defined as follows, see e.g., \cite{bollobas2001} or \cite[Chapter 7]{hofstad2009}: given a degree sequence  with $\sum_{v\in[n]} D_v$ even, we start with $D_v$ free half-edges adjacent to vertex $v$, for $v\in[n]$. The configuration model is constructed by successively pairing, uniformly at random, free half-edges into edges and removing them from the set of free half-edges, until no free half-edges remain. Conditionally on obtaining a simple graph, the resulting graph is a {\em uniform} sample from the ensemble of simple graphs with the prescribed degree sequence~\cite[Chapter 7]{hofstad2009}. This is why the configuration model is often used as a {\em model} for real-world networks with given degrees. 
The {\em erased configuration model} is the model where all multiple edges are merged and all self-loops are removed.

In this paper, we study the setting where the degree distribution has {\em infinite variance}. Then the number of 
erased edges is large~\cite{Hoorn2015} (yet small compared to the total number of edges). 
%
In particular, we take the degrees to be an i.i.d.\ copies of a random variable $D$ such that
	\begin{equation}
	\label{D-tail}
	\prob(D=k)=c k^{-\tau}(1+o(1)), \quad {\rm as} \ k\rightarrow \infty,
	\end{equation}
where $\tau\in(2,3)$ so that $\Exp{D^2}=\infty$ and 
	\begin{equation}
	\Exp{D}=\mu<\infty.
	\end{equation} 
When this sample constructs a degree sequence such that the sum of the degrees is odd, we add an extra half-edge to the last vertex. This does not affect our computations. In this setting, $D_{\max}$ is of order $n^{1/(\tau-1)}$, where $D_{\max}=\max_{v\in[n]}D_v$ denotes the maximal degree of the degree sequence. Denote the erased configuration model on $n$ vertices by $\ECMn$ when the degrees are an independent and identically distributed (i.i.d.) sample of~\eqref{D-tail}, and  $\ECMnD$ when the degree sequence equals $\boldsymbol{D}$.

\paragraph{Quenched and annealed.}
Note that the erased configuration model as defined above has two sources of randomness: the independent and identically distributed (i.i.d.) degrees and the random pairing of the half-edges in constructing the graph. Studying the behavior of subgraphs in the erased configuration model once the degree sequence has been fixed corresponds to the \emph{quenched} setting, whereas the erased configuration model with random degrees corresponds to the \emph{annealed} setting. Our main result on the number of subgraphs in the erased configuration model is in the annealed setting. However, in the proof of our results we often study subgraph counts in the quenched setting. Throughout this paper, we denote the probability of an event $\mathcal{E}$ in the quenched setting by\cj{
\begin{equation}
\Probn{\mathcal{E}}=\Prob{\mathcal{E}\mid (D_v)_{v\in[n]}},
\end{equation}}
and we define $\expec_n$ and $\text{Var}_n$ accordingly. 

\paragraph{Subgraph counts.}
Let $H=(V_H,\Ecal_H)$ be a small, connected graph. We denote the induced subgraph count of $H$, the number of subgraphs of $\ECMn$ that are isomorphic to $H$, by $N^\gind(H)$. 
We denote the subgraph count, the number of occurrences of $H$ as a subgraph of $\ECMn$, by $N^\gsub(H)$.

Throughout this paper, we denote the sampled degree of a vertex $\cj{v\in[n]}$ in the erased configuration model by $\cj{D_v}$. Note that this may not be the same as the actual degree of a vertex in the erased configuration model, since self-loops are removed and multiple edges are merged. Since we study subgraphs $H$, we sometimes also  need to use the degree of a vertex in $H$ inside the subgraph. We denote the degree of a vertex $i$ of a subgraph $H$ by $d_i$.


\paragraph{Paper outline.}
We present our main results in Section~\ref{sec:main}, including the theorems that characterize all optimal subgraphs in terms of the solutions to optimization problems. We also apply these theorems to describe the optimal configurations of all subgraphs with 4 and 5 vertices, and present an outlook for further use of our results. We provide an overview of the proof structures in Section~\ref{sec:overview}.
We then prove the first part of the main theorems for subgraphs in Section~\ref{sec:maxcont} and for $\sqrt{n}$-optimal subgraphs in Section~\ref{sec:proof2}. The proofs of some lemmas introduced along the way are deferred to Section~\ref{sec:prooflem1}. The proof of the second part of the main theorem can be found in Section~\ref{sec:proofsec2}. We finally show how the proofs for subgraphs can be adjusted to prove the theorems on induced subgraphs in Section~\ref{sec:graphlets}.
		
\paragraph{Notation.}\label{sec:notation}
We say that a sequence of events $(\mathcal{E}_n)_{n\geq 1}$ happens with high probability (w.h.p.) if $\lim_{n\to\infty}\Prob{\mathcal{E}_n}=1$ and we use $\plim $ for convergence in probability. We write $f(n)=o(g(n))$ if $\lim_{n\to\infty}f(n)/g(n)=0$, and $f(n)=O(g(n))$ if $|f(n)|/g(n)$ is uniformly bounded. We write $f(n)=\Theta(g(n))$ if $f(n)=O(g(n) )$ as well as $g(n)=O(f(n))$. We say that $X_n=O_{\sss{\prob}}(g(n))$ for a sequence of random variables $(X_n)_{n\geq 1}$ if $|X_n|/g(n)$ is a tight sequence of random variables, and $X_n=o_{\sss{\prob}}(g(n))$ if $X_n/g(n)\plim 0$.

\section{Main results}\label{sec:main}
The key insight obtained in this paper is that the creation of subgraphs is crucially affected by the following trade-off, inherently present in power-law networks: {On} the one hand, hubs contribute substantially to the subgraph count, because they are well connected, and therefore potentially contribute to many subgraphs. On the other hand, hubs are by definition rare. This should be contrasted with lower-degree vertices that occur more frequently, but typically take part in fewer connections and hence fewer subgraphs. 
Therefore, one may expect every subgraph to consist of a selection of vertices with specific degrees
that `optimizes' this trade-off and hence maximizes the probability that the subgraph occurs. 

Let $\ECMn|_{\boldsymbol{v}}$ denote the induced subgraph of the erased configuration model on vertices $\boldsymbol{v}$. Write the probability that a subgraph $H=(V_H,\Ecal_H)$ with $|V_H|=k$ is created on $k$ uniformly chosen vertices $\boldsymbol{v}=(v_1, \ldots, v_k)$ in $\ECMn$ as
\begin{equation}\label{eq:pHpresent}
\Prob{\ECMn|_{\boldsymbol{v}}\supseteq H}=\sum_{\boldsymbol{D}'}\Prob{\ECMn|_{\boldsymbol{v}}\supseteq H \mid D_{\boldsymbol{v}}=\boldsymbol{D}'}\Prob{D_{\boldsymbol{v}}=\boldsymbol{D}'},
\end{equation}
where the sum is over all possible degrees on $k$ vertices $\boldsymbol{D}'=(D_i')_{i\in [k]}$, and $D_{\boldsymbol{v}}=(D_{v_i})_{i\in[k]}$ denotes the degrees of the randomly chosen set of $k$ vertices. 
We show that for every (induced) subgraph, there is a specific range of $D_1',\ldots,D_k'$ that gives the maximal contribution to~\eqref{eq:pHpresent}, large enough even to completely ignore all other degree ranges. 

We show that \cs{when~\eqref{eq:pHpresent} is maximized by a unique range of degrees}, there are only four possible ranges of degrees that maximize the term inside the sum in~\eqref{eq:pHpresent}. These ranges are constant degrees, or degrees proportional to $n^{(\tau-2)/(\tau-1)}$, to $\sqrt{n}$ or to $n^{1/(\tau-1)}$. 

\subsection{An optimization problem}
We now present the optimization problems that maximizes the summand in~\eqref{eq:pHpresent}, first for subgraphs and later for induced subgraphs. 
Let $H=(V_H,\Ecal_H)$ be a small, connected graph on $k\geq 3$ vertices. Denote the set of vertices of $H$ that have degree one inside $H$ by $V_1$. 
Let $\mathcal{P}$ be all partitions of $V_H\setminus V_1$ into three disjoint sets $S_1,S_2,S_3$.
 This partition into $S_1,S_2$ and $S_3$ corresponds to the following typical orders of magnitude \cs{of the degrees of the vertices of $H$ embedded in $\ECMn$}: $S_1$ denotes the vertices with degree proportional to $n^{(\tau-2)/(\tau-1)}$, $S_2$ the ones with degrees proportional to $n^{1/(\tau-1)}$, and $S_3$ the vertices with degrees proportional to $\sqrt{n}$. The optimization problem finds the partition of the vertices into these three orders of magnitude that maximizes the contribution to the number of (induced) subgraphs. When a vertex in $H$ has degree 1, its degree in $\ECMn$ is typically small, i.e., it does not grow {with} $n$. 

Given a partition $\mathcal{P}=(S_1,S_2,S_3)$ of $V_H\setminus V_1$, let $\Ecal_{S_i}$ denote the set of edges in $H$ between vertices in $S_i$ and $E_{S_i}=|\Ecal_{S_i}|$ its size, $\Ecal_{S_i,S_j}$ the set of edges between vertices in $S_i$ and $S_j$ and $E_{S_i,S_j}=|\Ecal_{S_i,S_j}|$ its size, and finally $\Ecal_{S_i,V_1}$ the set of edges between vertices in $V_1$ and $S_i$ and $E_{S_i,V_1}=|\Ecal_{S_i,V_1}|$ its size. We now define the optimization problem for subgraphs that optimizes the summand in~\eqref{eq:pHpresent} as
	\begin{equation}
	\label{eq:maxeqsub}
	B^\gsub(H) = \max_{\mathcal{P}}\left[\abs{S_1}-\abs{S_2}-\frac{2E_{S_1}+E_{S_1,S_3}+E_{S_1,V_1}-E_{S_2,V_1}}{\tau-1}\right].
	\end{equation}
The first two terms in the optimization problem give a positive contribution for all vertices in $S_1$, \cj{which have} relatively low degree, and a negative contribution for vertices in $S_2$, which have high degrees. Therefore, the first two terms in the optimization problem capture that high-degree vertices are rare, and low-degree vertices abundant. The last term gives a negative contribution for all edges between vertices with relatively low degrees in the subgraph. This captures the other part of the trade-off: high-degree vertices are more likely to connect to other vertices than low degree vertices. 
Note that $B^\gsub(H)\geq 0$, since putting all vertices in $S_3$ yields zero. 

For induced subgraphs, we define the similar optimization problem
	\begin{align}
	B^\gind(H)  = & \max_{\mathcal{P}^\gind}\left[\abs{S_1}-\abs{S_2}-\frac{2E_{S_1}+E_{S_1,S_3}+E_{S_1,V_1}-E_{S_2,V_1}}{\tau-1}\right],\nonumber\\
	&\qquad\quad \text{s.t.}\quad  \{{i,j}\}\in \Ecal_H \quad \forall  {i}\in S_2, {j}\in S_2\cup S_3,
	\label{eq:maxeqind}
	\end{align}
where again $\mathcal{P}^\gind{=(S_1,S_2,S_3)}$ is a partition of $V_H\setminus V_1$ into three sets. The constraint in~\eqref{eq:maxeqind} ensures that edges that are not present in $H$ are not present in the subgraph. 
Again, $B^\gind(H)\geq 0$ because $S_3=V_H\setminus V_1$ is a valid solution. 

Our main result shows that indeed the optimization problems~\eqref{eq:maxeqsub} and~\eqref{eq:maxeqind} find the typical vertex degrees for any (induced) subgraph and determine the scaling of the number of subgraphs.
We then investigate a special class of subgraphs, where the optimal contribution to~\eqref{eq:maxeqsub} or~\eqref{eq:maxeqind} is $S_3=V_H$, i.e., (induced) subgraphs where all typical vertex degrees are proportional to $\sqrt{n}$. For this class, which contains for instance cliques of all sizes, we present sharp asymptotics.

\subsection{General subgraphs}
 Let $S_1^\gsub,S_2^\gsub,S_3^\gsub$ be a maximizer of~\eqref{eq:maxeqsub}. Furthermore, for any $\boldsymbol{\alpha}=(\alpha_1,\ldots, \alpha_k)$ such that $\alpha_i\in[0,1/(\tau-1)]$, define
 	\begin{equation}\label{eq:Mnalph}
 	M_n^{(\boldsymbol{\alpha})}(\varepsilon)=\{ ({v_1,\ldots, v_k})\colon  D_{{v_i}}\in[\varepsilon,1/\varepsilon] (\mu n) ^{\alpha_i}\ \forall i\in[k] \}.
 	\end{equation}
These are the sets of vertices $(v_1,\ldots, v_k)$ such that ${D_{v_1}}$ is proportional to $n^{\alpha_1}$ and ${D_{v_2}}$ proportional to $n^{\alpha_2}$ and so on. 
Denote the number of subgraphs with vertices in $M_n^{(\boldsymbol{\alpha})}(\varepsilon)$ by $N^\gsub(H,M_n^{(\boldsymbol{\alpha})}(\varepsilon))$. Define the vector $\boldsymbol{\alpha}^\gsub$ as
 \begin{equation}\label{eq:alphasub}
	 {\alpha}_i^\gsub=\begin{cases}
		 (\tau-2)/(\tau-1)& i\in S^\gsub_1,\\
		 1/(\tau-1)& i\in S^\gsub_2,\\
		 \tfrac{1}{2} & i\in S^\gsub_3,\\
		 0 & i\in V_1.
	 \end{cases}
 \end{equation}
 For induced subgraphs, let $S_1^\gind,S_2^\gind,S_3^\gind$ be a maximizer of~\eqref{eq:maxeqind}, and define $\boldsymbol{\alpha}^\gind$ as in~\eqref{eq:alphasub}, replacing $S_i^\gsub$ by $S_i^\gind$.  
The next theorem shows that sets of vertices in $M_n^{\boldsymbol{\alpha}^\gsub}(\varepsilon)$ or $M_n^{\boldsymbol{\alpha}^\gind}(\varepsilon)$ contain a large number of subgraphs, and computes the scaling of the number of (induced) subgraphs:

\begin{theorem}[General (induced) subgraphs]\label{thm:motifs}
	Let $H$ be a subgraph on $k$ vertices such that the solution to~\eqref{eq:maxeqsub} is unique.
	\begin{enumerate}[(i)]
		\item 
For any $\varepsilon_n$ such that $\lim_{n\to\infty}\varepsilon_n=0$,
	\begin{equation}
	\frac{N^\gsub\big(H,M_n^{(\boldsymbol{\alpha}^\gsub)}\left(\varepsilon_n\right)\big) }{N^\gsub(H)}\plim 1.
	\end{equation}
	\item Furthermore, for any fixed $0<\varepsilon<1$,\cs{
	\begin{equation}\label{eq:Nsubmag}
	\frac{N^\gsub(H,M_n^{(\boldsymbol{\alpha}^\gsub)}(\varepsilon))}{n^{\frac{3-\tau}{2}(k_{2+}+B^\gsub(H))+k_1/2}} \leq  f(\varepsilon)+\op(1),
	\end{equation}
and
	\begin{equation}\label{eq:Nsubmaglow}
	\frac{N^\gsub(H,M_n^{(\boldsymbol{\alpha}^\gsub)}(\varepsilon))}{n^{\frac{3-\tau}{2}(k_{2+}+B^\gsub(H))+k_1/2}} \geq  \tilde{f}(\varepsilon)+\op(1),
	\end{equation}
for some functions $f(\varepsilon),\tilde{f}(\varepsilon)<\infty $ not depending on $n$.} Here $k_{2+}$ denotes the number of vertices in $H$ of degree at least 2, and $k_1$ the number of degree-one vertices in {the subgraph} $H$.
\end{enumerate}For induced subgraphs the same statements hold, replacing $\rm{\scriptstyle (sub)}$ by $\rm{\scriptstyle (ind)}$ \and the optimization problem in~\eqref{eq:maxeqsub} by that in~\eqref{eq:maxeqind}.
\end{theorem}
Theorem~\ref{thm:motifs}(ii) only provides the scaling in $n$ and some functions $f(\varepsilon), \tilde{f}(\varepsilon)$, which could tend to $\infty$ when $\varepsilon\searrow 0$. For subgraphs with $S_3=V_H$, we obtain more precise asymptotics in the next section.

%

\subsection{Sharp asymptotics for $\sqrt{n}$-class of subgraphs}
Now we study the special class of subgraphs for which the unique maximum of \eqref{eq:maxeqsub} or~\eqref{eq:maxeqind} is $S_3=V_H$. 
By the above interpretation of $S_1$, $S_2$ and $S_3$, we study (induced) subgraphs where the maximum contribution to the number of such subgraphs comes from vertices that have degrees proportional to $\sqrt{n}$ in $\ECMn$. 
Examples of subgraphs that fall into this category are all complete graphs. Bipartite graphs on the other hand, do not fall into the $\sqrt{n}$-class subgraphs, since we can use the two parts of the bipartition as $S_1$ and $S_2$ in such a way that~\eqref{eq:maxeqsub} results in a non-negative solution. The next theorem gives asymptotics for the number of $\sqrt{n}$-(induced) subgraphs:

\begin{theorem}[(Induced) subgraphs with $\sqrt{n}$ degrees]\label{thm:sqrtsub}
	Let $H$ be a connected graph on $k$ vertices with minimal degree 2 such that the solution to~\eqref{eq:maxeqsub} is unique, and $B^\gsub(H)=0$. Then,
	\begin{equation}
	\frac{N^\gsub(H)}{n^{\frac{k}{2}(3-\tau)}}\plim A^\gsub(H)<\infty,
	\end{equation}
	with
	\begin{equation}\label{eq:Asub}
	A^\gsub(H) = c^k\mu^{-\frac{k}{2}(\tau-1)}\int_{0}^{\infty}\cdots \int_{0}^{\infty}(x_1\cdots x_k)^{-\tau}\prod_{\mathclap{\{{i,j}\}\in \Ecal_{H}}}(1-\me^{-x_ix_j})\dd x_1\cdots \dd x_k.
	\end{equation}
	For induced subgraphs the same statements hold, replacing $\rm{\scriptstyle (sub)}$ by $\rm{\scriptstyle (ind)}$ and~\eqref{eq:maxeqsub} by~\eqref{eq:maxeqind}, where
		\begin{equation}\label{eq:Aind}
	\begin{aligned}[b]
	A^\gind(H) &= c^k\mu^{-\frac{k}{2}(\tau-1)}\int_{0}^{\infty}\cdots \int_{0}^{\infty}(x_1\cdots x_k)^{-\tau} \prod_{\mathclap{\{{i,j}\}\in \Ecal_H}} \ (1-\me^{-x_ix_j})\\
	& \quad\quad  \times \prod_{\mathclap{\{{i,j}\}\notin \Ecal_H}}   \me^{-x_ix_j}\dd x_1\cdots \dd x_k.
	\end{aligned}
	\end{equation}
\end{theorem}

In the erased configuration model, the probability that a vertex with degree $D_{v_i}$ connects to a vertex with degree $D_{v_j}$ can be approximated by $1-\me^{-D_{v_i}D_{v_j}/L_n}$, where $L_n\cj{=\sum_{v\in[n]}D_v}$ denotes the sum of all degrees. When rescaling, and taking $D_{v_i}\approx x_i\sqrt{n/\mu}$ and $D_{v_j}=x_j\sqrt{n/\mu}$, this results in the factors $1-\me^{-x_ix_j}$ in~\eqref{eq:Asub} for all edges $\{i,j\}\in \Ecal_H$ in subgraph $H$. For induced subgraphs, the fact that no other edges than the edges in $H$ are allowed to be present gives the extra factors $\me^{-x_ix_j}$ {for $\{i,j\}\notin \Ecal_H$}{} in~\eqref{eq:Aind}.

\subsection{Subgraphs on 4 and 5 vertices}\label{sec:motif45}
We apply Theorem~\ref{thm:motifs} to characterize the optimal subgraph configurations on 4 or 5 vertices. 
We find the partitions that maximize~\eqref{eq:maxeqsub} and~\eqref{eq:maxeqind}, and check whether this maximum is unique. If the maximum is indeed unique, then we can use Theorem~\ref{thm:motifs} to calculate the scaling of the number of such (induced) subgraphs. 
Figures~\ref{fig:graphlet4} and~\ref{fig:motif5} show the order of magnitude of the number of induced subgraphs on 4 and 5 vertices obtained in this way, together with the optimizing sets of~\eqref{eq:maxeqind}. 
For example, the optimal values of $S_1,S_2$ and $S_3$ for the subgraph in Figure~\ref{fig:paw} show that
	\begin{equation}
	B^\gind(H)=2-1+\frac{2+0+0-1}{\tau-1}=1+\frac{1}{\tau-1}.
	\end{equation}
By Theorem~\ref{thm:motifs}, the scaling of the induced subgraph in Figure~\ref{fig:paw} then equals 
	\begin{equation}
	n^{(3-\tau)(4-1/(\tau-1))/2+\tfrac{1}{2}}=n^{7-2\tau-\frac{1}{\tau-1}}.
	\end{equation}
The scaling of the other induced subgraphs are computed similarly.

\begin{figure}[tb]
	\definecolor{mycolor1}{RGB}{230,37,52}%
	\tikzstyle{every node}=[circle,fill=black!85,minimum size=8pt,inner sep=0pt]
	\tikzstyle{S1}=[fill=mycolor1!60]
	\tikzstyle{S2}=[fill=mycolor1!60!black]
	\tikzstyle{S3}=[fill=mycolor1]
	\tikzstyle{n1}=[fill=mycolor1!20]
	\begin{subfigure}[t]{0.16\linewidth}
		\centering
		\begin{tikzpicture}
		\tikzstyle{edge} = [draw,thick,-]
		\node[S3] (a) at (90:0.8) {};
		\node[S3] (b) at (162:0.8) {};
		\node[S3] (c) at (234:0.8) {};
		\node[S3] (d) at (306:0.8) {};
		\node[S3] (e) at (378:0.8) {};
		\draw[edge] (a)--(b);
		\draw[edge] (c)--(b);
		\draw[edge] (d)--(b);
		\draw[edge] (a)--(c);
		\draw[edge] (a)--(d);
		\draw[edge] (c)--(e);
		\draw[edge] (e)--(d);
		\draw[edge] (b)--(e);
		\draw[edge] (a)--(e);
		\draw[edge] (c)--(d);
		\end{tikzpicture}	
		\caption{$n^{\frac{5}{2}(3-\tau)}$}
	\end{subfigure}
	\begin{subfigure}[t]{0.16\linewidth}
		\centering
		\begin{tikzpicture}
		\tikzstyle{edge} = [draw,thick,-]
		\node[S3] (a) at (90:0.8) {};
		\node[S3] (b) at (162:0.8) {};
		\node[S3] (c) at (234:0.8) {};
		\node[S3] (d) at (306:0.8) {};
		\node[S3] (e) at (378:0.8) {};
		\draw[edge] (a)--(b);
		\draw[edge] (c)--(b);
		\draw[edge] (d)--(b);
		\draw[edge] (a)--(c);
		\draw[edge] (a)--(d);
		\draw[edge] (c)--(e);
		\draw[edge] (e)--(d);
		\draw[edge] (b)--(e);
		\draw[edge] (a)--(e);
		\end{tikzpicture}	
		\caption{$n^{\frac{5}{2}(3-\tau)}$}
	\end{subfigure}
	\begin{subfigure}[t]{0.16\linewidth}
		\centering
		\begin{tikzpicture}
		\tikzstyle{edge} = [draw,thick,-]
		\node[S3] (a) at (0,0) {};
		\node[S3] (b) at (1,0) {};
		\node[S3] (c) at (0,1) {};
		\node[S3] (d) at (1,1) {};
		\node[S3] (e) at (0.5,1.5) {};
		\draw[edge] (a)--(b);
		\draw[edge] (c)--(b);
		\draw[edge] (d)--(b);
		\draw[edge] (a)--(c);
		\draw[edge] (a)--(d);
		\draw[edge] (c)--(e);
		\draw[edge] (d)--(e);
		\draw[edge] (c)--(d);
		\end{tikzpicture}	
		\caption{$n^{\frac{5}{2}(3-\tau)}$}
	\end{subfigure}
	\begin{subfigure}[t]{0.16\linewidth}
		\centering
		\begin{tikzpicture}
		\tikzstyle{edge} = [draw,thick,-]
		\node[S3] (a) at (0,0) {};
		\node[S3] (b) at (1,0) {};
		\node[S3] (c) at (0,1) {};
		\node[S3] (d) at (1,1) {};
		\node[S3] (e) at (0.5,0.5) {};
		\draw[edge] (a)--(b);
		\draw[edge] (c)--(a);
		\draw[edge] (b)--(e);
		\draw[edge] (a)--(e);
		\draw[edge] (b)--(d);
		\draw[edge] (c)--(e);
		\draw[edge] (d)--(e);
		\draw[edge] (c)--(d);
		\end{tikzpicture}	
		\caption{$n^{\frac{5}{2}(3-\tau)}$}
	\end{subfigure}
	\begin{subfigure}[t]{0.16\linewidth}
		\centering
		\begin{tikzpicture}
		\tikzstyle{edge} = [draw,thick,-]
		\node[S3] (a) at (0,0) {};
		\node[S3] (b) at (1,0) {};
		\node[S3] (c) at (0,1) {};
		\node[S3] (d) at (1,1) {};
		\node[n1] (e) at (0.5,1.5) {};
		\draw[edge] (a)--(b);
		\draw[edge] (c)--(b);
		\draw[edge] (d)--(b);
		\draw[edge] (a)--(c);
		\draw[edge] (a)--(d);
		\draw[edge] (c)--(e);
		\draw[edge] (c)--(d);
		\end{tikzpicture}	
		\caption{$n^{\frac{13}{2}-2\tau}$}
	\end{subfigure}
	\begin{subfigure}[t]{0.16\linewidth}
		\centering
		\begin{tikzpicture}
		\tikzstyle{edge} = [draw,thick,-]
		\node[S3] (a) at (0,0) {};
		\node[S3] (b) at (1,0) {};
		\node[S3] (c) at (0,1) {};
		\node[S3] (d) at (1,1) {};
		\node[S3] (e) at (0.5,0.5) {};
		\draw[edge] (a)--(b);
		\draw[edge] (c)--(a);
		\draw[edge] (b)--(e);
		\draw[edge] (b)--(d);
		\draw[edge] (c)--(e);
		\draw[edge] (c)--(d);
		\draw[edge] (e)--(d);
		\end{tikzpicture}	
		\caption{$n^{\frac{5}{2}(3-\tau)}$}
	\end{subfigure}
	
	\begin{subfigure}[t]{0.16\linewidth}
		\centering
		\begin{tikzpicture}
		\tikzstyle{edge} = [draw,thick,-]
		\node[S3] (a) at (0,0) {};
		\node[S3] (b) at (1,0) {};
		\node[S3] (c) at (0,1) {};
		\node[S3] (d) at (1,1) {};
		\node[S3] (e) at (0.5,1.5) {};
		\draw[edge] (a)--(b);
		\draw[edge] (c)--(b);
		\draw[edge] (d)--(b);
		\draw[edge] (a)--(c);
		\draw[edge] (c)--(e);
		\draw[edge] (d)--(e);
		\draw[edge] (c)--(d);
		\end{tikzpicture}	
		\caption{$n^{\frac{5}{2}(3-\tau)}$}
	\end{subfigure}
	\begin{subfigure}[t]{0.16\linewidth}
		\centering
		\begin{tikzpicture}
		\tikzstyle{edge} = [draw,thick,-]
		\node[S1] (a) at (0,0) {};
		\node[S1] (b) at (1,0) {};
		\node[S2] (c) at (0,1) {};
		\node[S2] (d) at (1,1) {};
		\node[S1] (e) at (0.5,1.5) {};
		\draw[edge] (d)--(b);
		\draw[edge] (c)--(b);
		\draw[edge] (d)--(b);
		\draw[edge] (a)--(c);
		\draw[edge] (a)--(d);
		\draw[edge] (c)--(e);
		\draw[edge] (d)--(e);
		\draw[edge] (c)--(d);
		\end{tikzpicture}	
		\caption{$n^{9-3\tau}$}
		\label{fig:K23}
	\end{subfigure}
	\begin{subfigure}[t]{0.16\linewidth}
		\centering
		\begin{tikzpicture}
		\tikzstyle{edge} = [draw,thick,-]
		\node (a) at (0,0) {};
		\node (b) at (1,0) {};
		\node (c) at (0,1) {};
		\node (d) at (1,1) {};
		\node (e) at (0.5,0.5) {};
		\draw[edge] (a)--(b);
		\draw[edge] (e)--(a);
		\draw[edge] (b)--(e);
		\draw[edge] (e)--(d);
		\draw[edge] (c)--(e);
		\draw[edge] (c)--(d);
		\end{tikzpicture}	
		\caption{depends on $\tau$}
		\label{fig:bowtie}
	\end{subfigure}
	\begin{subfigure}[t]{0.16\linewidth}
		\centering
		\begin{tikzpicture}
		\tikzstyle{edge} = [draw,thick,-]
		\node[S3] (a) at (0,0) {};
		\node[S3] (b) at (1,0) {};
		\node[S3] (c) at (0,1) {};
		\node[S3] (d) at (1,1) {};
		\node[n1] (e) at (0.5,1.5) {};
		\draw[edge] (a)--(b);
		\draw[edge] (d)--(b);
		\draw[edge] (a)--(c);
		\draw[edge] (a)--(d);
		\draw[edge] (c)--(e);
		\draw[edge] (c)--(d);
		\end{tikzpicture}	
		\caption{$n^{\frac{13}{2}-2\tau}$}
	\end{subfigure}
	\begin{subfigure}[t]{0.16\linewidth}
		\centering
		\begin{tikzpicture}
		\tikzstyle{edge} = [draw,thick,-]
		\node[S1] (a) at (0,0) {};
		\node[S2] (b) at (1,0) {};
		\node[S2] (c) at (0,1) {};
		\node[S1] (d) at (1,1) {};
		\node[n1] (e) at (0.5,1.5) {};
		\draw[edge] (a)--(b);
		\draw[edge] (d)--(b);
		\draw[edge] (a)--(c);
		\draw[edge] (b)--(c);
		\draw[edge] (c)--(e);
		\draw[edge] (c)--(d);
		\end{tikzpicture}	
		\caption{$n^{6-2\tau-\frac{1}{\tau-1}}$}
	\end{subfigure}
	\begin{subfigure}[t]{0.16\linewidth}
		\centering
		\begin{tikzpicture}
		\tikzstyle{edge} = [draw,thick,-]
		\node[S3] (a) at (0,0) {};
		\node[S3] (b) at (1,0) {};
		\node[S3] (c) at (0,1) {};
		\node[S3] (d) at (1,1) {};
		\node[S3] (e) at (0.5,1.5) {};
		\draw[edge] (a)--(b);
		\draw[edge] (d)--(b);
		\draw[edge] (a)--(c);
		\draw[edge] (e)--(d);
		\draw[edge] (c)--(e);
		\draw[edge] (c)--(d);
		\end{tikzpicture}	
		\caption{$n^{\frac{5}{2}(3-\tau)}$}
	\end{subfigure}
	
	\begin{subfigure}[t]{0.16\linewidth}
		\centering
		\begin{tikzpicture}
		\tikzstyle{edge} = [draw,thick,-]
		\node[S3] (a) at (0,0) {};
		\node[S3] (b) at (1,0) {};
		\node[S3] (c) at (0,1) {};
		\node[S3] (d) at (1,1) {};
		\node[S3] (e) at (0.5,0.5) {};
		\draw[edge] (a)--(b);
		\draw[edge] (c)--(a);
		\draw[edge] (b)--(e);
		\draw[edge] (b)--(d);
		\draw[edge] (c)--(e);
		\draw[edge] (c)--(d);
		\end{tikzpicture}	
		\caption{$n^{\frac{5}{2}(3-\tau)}$}
		\label{fig:m5dom}
	\end{subfigure}
	\begin{subfigure}[t]{0.16\linewidth}
		\centering
		\begin{tikzpicture}
		\tikzstyle{edge} = [draw,thick,-]
		\node (a) at (0,0) {};
		\node (b) at (1,0) {};
		\node (c) at (0.5,1) {};
		\node[n1] (d) at (0.5,1.5) {};
		\node (e) at (0.5,0.5) {};
		\draw[edge] (a)--(b);
		\draw[edge] (e)--(a);
		\draw[edge] (b)--(e);
		\draw[edge] (e)--(c);
		\draw[edge] (c)--(d);
		\end{tikzpicture}	
		\caption{depends on $\tau$}
	\end{subfigure}
	\begin{subfigure}[t]{0.16\linewidth}
		\centering
		\begin{tikzpicture}
		\tikzstyle{edge} = [draw,thick,-]
		\node[S3] (a) at (90:0.8) {};
		\node[S3] (b) at (162:0.8) {};
		\node[S3] (c) at (234:0.8) {};
		\node[S3] (d) at (306:0.8) {};
		\node[S3] (e) at (378:0.8) {};
		\draw[edge] (a)--(b);
		\draw[edge] (c)--(b);
		\draw[edge] (d)--(c);
		\draw[edge] (a)--(e);
		\draw[edge] (d)--(e);
		\end{tikzpicture}	
		\caption{$n^{\frac{5}{2}(3-\tau)}$}
	\end{subfigure}
	\begin{subfigure}[t]{0.16\linewidth}
		\centering
		\begin{tikzpicture}
		\tikzstyle{edge} = [draw,thick,-]
		\node[S1] (a) at (0,0) {};
		\node[S1] (b) at (1,0) {};
		\node[n1] (c) at (0,1) {};
		\node[n1] (d) at (1,1) {};
		\node[S2] (e) at (0.5,0.5) {};
		\draw[edge] (a)--(b);
		\draw[edge] (e)--(a);
		\draw[edge] (b)--(e);
		\draw[edge] (e)--(d);
		\draw[edge] (c)--(e);
		\end{tikzpicture}	
		\caption{$n^{7-2\tau}$}
	\end{subfigure}
	\begin{subfigure}[t]{0.16\linewidth}
		\centering
		\begin{tikzpicture}
		\tikzstyle{edge} = [draw,thick,-]
		\node[S2] (a) at (0,0) {};
		\node[S2] (b) at (1,0) {};
		\node[n1] (c) at (0,1) {};
		\node[n1] (d) at (1,1) {};
		\node[S1] (e) at (0.5,0.5) {};
		\draw[edge] (a)--(b);
		\draw[edge] (e)--(a);
		\draw[edge] (b)--(e);
		\draw[edge] (b)--(d);
		\draw[edge] (c)--(a);
		\end{tikzpicture}	
		\caption{$n^{3-\tau+\frac{2}{\tau-1}}$}
	\end{subfigure}
	\begin{subfigure}[t]{0.16\linewidth}
		\centering
		\begin{tikzpicture}
		\tikzstyle{edge} = [draw,thick,-]
		\node[S3] (a) at (0,0) {};
		\node[S3] (b) at (1,0) {};
		\node[S3] (c) at (0,1) {};
		\node[S3] (d) at (1,1) {};
		\node[n1] (e) at (0.5,1.5) {};
		\draw[edge] (a)--(b);
		\draw[edge] (d)--(b);
		\draw[edge] (a)--(c);
		\draw[edge] (c)--(e);
		\draw[edge] (c)--(d);
		\end{tikzpicture}	
		\caption{$n^{\frac{13}{2}-2\tau}$}
	\end{subfigure}
	
	\begin{subfigure}[t]{0.16\linewidth}
		\centering
		\begin{tikzpicture}
		\tikzstyle{edge} = [draw,thick,-]
		\node[S3] (a) at (0,0) {};
		\node[S3] (b) at (1,0) {};
		\node[S3] (c) at (0,1) {};
		\node[n1] (d) at (1,1) {};
		\node[n1] (e) at (0.5,1.5) {};
		\draw[edge] (a)--(b);
		\draw[edge] (d)--(b);
		\draw[edge] (a)--(c);
		\draw[edge] (c)--(e);
		\end{tikzpicture}	
		\caption{$n^{\frac{11-3\tau}{2}}$}
	\end{subfigure}
	\begin{subfigure}[t]{0.16\linewidth}
		\centering
		\begin{tikzpicture}
		\tikzstyle{edge} = [draw,thick,-]
		\node[n1] (a) at (0,0) {};
		\node[n1] (b) at (1,0) {};
		\node[S2] (c) at (0,1) {};
		\node[S1] (d) at (1,1) {};
		\node[n1] (e) at (0.5,1.5) {};
		\draw[edge] (d)--(b);
		\draw[edge] (a)--(c);
		\draw[edge] (c)--(e);
		\draw[edge] (c)--(d);
		\end{tikzpicture}	
		\caption{$n^{4-\tau+\frac{1}{\tau-1}}$}
	\end{subfigure}
	\begin{subfigure}[t]{0.16\linewidth}
		\centering
		\begin{tikzpicture}
		\tikzstyle{edge} = [draw,thick,-]
		\node[n1] (a) at (0,0) {};
		\node[n1] (b) at (1,0) {};
		\node[n1] (c) at (0,1) {};
		\node[n1] (d) at (1,1) {};
		\node[S2] (e) at (0.5,0.5) {};
		\draw[edge] (e)--(a);
		\draw[edge] (b)--(e);
		\draw[edge] (e)--(d);
		\draw[edge] (c)--(e);
		\end{tikzpicture}	
		\caption{$n^{\frac{4}{\tau-1}}$}
	\end{subfigure}

	\vspace{-0.5cm}
	\begin{subfigure}{\linewidth}
		\centering
		\begin{tikzpicture}
		\node[S2,label={[label distance=0.05cm]0:$n^{1/(\tau-1)}$}] (a) at (5,0) {};
		\node[S3,label={[label distance=0cm]0:$\sqrt{n}$}] (b) at (3.6,0) {};
		\node[S1,label={[label distance=0.05cm]0:$n^{(\tau-2)/(\tau-1)}$}] (c) at (1,0) {};
		\node[n1,label={[label distance=0.05cm]0:$1$}] (c) at (0,0) {};
		\node[label={[label distance=0.05cm]0:depends on $\tau$}] (d) at (7,0) {};
		\end{tikzpicture}
	\end{subfigure}
	\vspace{-0.8cm}
	\caption{The scaling of the number of subgraphs in $n$ for induced subgraphs on 5 vertices. The vertex colors correspond to the typical degrees.}
	\label{fig:motif5}
\end{figure}

Most induced subgraphs in Figures~\ref{fig:graphlet4} and~\ref{fig:motif5} satisfy the constraint in Theorem~\ref{thm:motifs} that the solution to the optimization problem~\eqref{eq:maxeqsub} or~\eqref{eq:maxeqind} is unique. However, the gray vertices in Figure~\ref{fig:graphlet4} do not have unique optimizers, so that our theorems do not apply. Still, a similar analysis as in Section~\ref{sec:opt} shows that there exist ranges of degrees that give the major contribution to the rescaled number of such (induced) subgraphs. The only difference is that these ranges are wider than for the vertices with unique maximizers. For example, for the diamond subgraph in Figure~\ref{fig:squareextra} the major contribution is from vertices where the degrees of vertices at each side of an edge $\{i,j\}$ in the square around the diamond satisfy $D_iD_j=\Theta(n)$. Note that having all degrees proportional to $\sqrt{n}$ therefore is one of the main contributors. However, contributions where the bottom left vertex and the top right vertex have degrees proportional to $n^{\alpha}$ and the other two vertices have degrees $n^{1-\alpha}$ give an equal contribution for other values of $\alpha$. Using that $D_iD_j$ follows a power-law distribution with exponent $\tau$ with an extra logarithmic factor~\cite[Eq. (2.16)]{hofstad2014} then gives the extra factor $\log(n)$ in Figure~\ref{fig:squareextra}. 

The bow tie in Figure~\ref{fig:bowtie} has a unique optimal solution to~\eqref{eq:maxeqsub}, but it depends on $\tau$. For $\tau<7/3$, the maximum of~\eqref{eq:maxeqind} is uniquely attained at 0, so that the optimal composition is with all vertices of degree $\Theta(\sqrt{n})$. On the other hand, when $\tau>7/3$, $S_1$ contains the degree 2 vertices while the middle vertex is in $S_2$. This partition gives a contribution to~\eqref{eq:maxeqind} of
	\begin{equation}
	4-1-\frac{2\cdot 2}{\tau-1}= \frac{3\tau - 7}{\tau-1},
	\end{equation}
which is larger than zero when $\tau>7/3$.
Thus, for $\tau$ larger than $7/3$, the major contribution is when the middle vertex has degree $n^{1/(\tau-1)}$, and the other vertices have degrees $n^{(\tau-2)/(\tau-1)}$. 

When the maximal contribution to an induced subgraph comes from vertices with degrees proportional to $\sqrt{n}$, then by Theorem~\ref{thm:sqrtsub}, the number of such induced subgraphs converges to a constant when properly rescaled. When the maximal contribution contains vertices in $S_2$ and $S_1$, this may not hold. For example, counting the number of induced claws of Figure~\ref{fig:wedge4} is similar to counting the number of sets of three neighbors for every vertex. The only sets of neighbors that we do not count, are neighbors that are connected. This is a small fraction of the pairs of neighbors~\cite[Eq. (5)-(7)]{hofstad2017b}, thus the number of claws is approximately equal to
	\begin{equation}
	\cj{\sum_{v\in[n]}\tfrac 1 6 D_v(D_v-1)(D_v-2)\approx \tfrac{1}{6}\sum_{v\in[n]}D_v^3.}
	\end{equation}
Since the degrees are an i.i.d.\ sample from a power-law distribution, $\sum_{\cj{v\in[n]}}D_v^3$ converges to a stable law when normalized properly. 
Thus, when vertices of degrees proportional to $n^{1/(\tau-1)}$ contribute, the leading order of the number of (induced) subgraphs may contain stable random variables, in contrast {to} the deterministic leading order for $\sqrt{n}$ degrees of Theorem~\ref{thm:sqrtsub}.

The scaling of the number of (non-induced) subgraphs can be deduced from Figure~\ref{fig:graphlet4}. 
For example, we count the number of square subgraphs (the subgraph of Figure~\ref{fig:square}) by adding the contributions from the induced subgraphs in Figures~\ref{fig:K4},~\ref{fig:squareextra} and~\ref{fig:square}, that all contain a square, which shows that a square occurs $\Theta(n^{6-2\tau}\log(n))$ times as a subgraph.
The major contribution to the number of square subgraphs is from the induced subgraphs in Figure~\ref{fig:squareextra}, which indeed contains a square, and occurs more frequently than the subgraphs of Figures~\ref{fig:K4} and~\ref{fig:square}. In this manner we can infer the order of magnitude of the number of subgraphs from the number of induced subgraphs. 

\subsection{Discussion and outlook}

\paragraph{Uniqueness of the optimum.}
Theorem~\ref{thm:motifs} only holds when the optimum of~\eqref{eq:maxeqsub}, respectively~\eqref{eq:maxeqind}, is unique. Figures~\ref{fig:graphlet4} and~\ref{fig:motif5} show that for most subgraphs on 4 or 5 vertices, this is indeed the case. In Section~\ref{sec:maxcont}, we show that~\eqref{eq:maxeqsub} and~\eqref{eq:maxeqind} can both be interpreted as piecewise linear optimization problems over the optimal degrees of the vertices that together form the subgraph. Thus, if the optimum is not unique, then it is attained by an entire range of degrees. In Section~\ref{sec:maxcont} we show that in this situation the optimum is attained for vertices $v_i,v_j$ with degrees $D_{v_i},D_{v_j}$ such that $D_{v_i}D_{v_j}=\Theta(n)$ across some edges $\{i,j\}\in \Ecal_H$. One such example is the diamond of Figure~\ref{fig:squareextra} discussed in Section~\ref{sec:motif45}.
We believe that the number of subgraphs where the optimum is not unique scales as in Theorem~\ref{thm:motifs} with some additional multiplicative factors of $\log(n)$. Proving this remains open for further research. 


\paragraph{ Automorphisms of $H$.}
An automorphism of a graph $H$ is a map $V_H\mapsto V_H$ such that the resulting graph is isomorphic to $H$. 
In Theorems~\ref{thm:motifs} and~\ref{thm:sqrtsub} we count automorphisms of $H$ as separate copies of $H$, so that we may count multiple copies of $H$ on one set of vertices. Since $|V_H|$ is fixed, and Theorem~\ref{thm:motifs} only considers the scaling of the number of subgraphs, this does not influence Theorem~\ref{thm:motifs}. Because Theorem~\ref{thm:sqrtsub} studies the exact scaling of the number of subgraphs, to count the number of subgraphs without automorphisms, one should divide the results of Theorem~\ref{thm:sqrtsub} by the number of automorphisms of $H$.

\paragraph{Self-averaging.}

A random variable is called \emph{self-averaging} if its coefficient of variation tends to zero, otherwise it is called non-self-averaging. 
When the degree distribution follows a power-law with exponent $\tau\in(2,3)$, the number of subgraphs may be non-self-averaging~\cite{ostilli2014}, so that
\begin{equation}
\limsup_{n\to\infty}\frac{\Var{N^\gsub(H)}}{\Exp{N^\gsub(H)}^2}\neq 0.
\end{equation}
One such example is the triangle. While the triangle subgraph satisfies the conditions of Theorem~\ref{thm:sqrtsub}, so that the rescaled number of triangles converges in probability to a constant, it was shown in~\cite{ostilli2014}, the number of triangles is non-self-averaging in the annealed sense when $\tau$ is close to 3. 
\cs{This indicates that most realizations of $\ECMnD$ will have a number of triangles that is close to the value predicted by Theorem~\ref{thm:sqrtsub}. However, since the number of triangles is non-self-averaging making its standard deviation quite large, some realizations will have a number of triangles that is much larger or smaller than the value predicted in Theorem~\ref{thm:sqrtsub}.}



\paragraph{Other random graph models.}  
An interesting question is whether Theorems~\ref{thm:motifs} and \ref{thm:sqrtsub} also apply to other models that create simple power-law random graphs. 
A very natural model for simple power-law random graphs is the uniform random graph, which samples a uniform graph from the ensemble of all simple graphs on a given degree sequence, which we analyze  for triangles using similar techniques as in this paper in~\cite{gao2018}.

Another random graph model that generates simple power-law random graphs is the rank-1 inhomogeneous random graph~\cite{chung2002,boguna2003}. In this model, vertices have weights $h_i$, where the weights are an i.i.d.\ sample of a power-law random variable with exponent $\tau\in(2,3)$. Then, two vertices are connected with probability $f_n(h_i,h_j)$. Two standard connection probability functions are $f_n(h_i,h_j)=\min(h_ih_j/(\mu n),1)$~\cite{chung2002}, and $f_n(h_i,h_j)=1-\me^{-h_ih_j/(\mu n)}$~\cite{bollobas2007}. Conditionally on the weight sequence, the edge statuses are independent, which is different from the erased configuration model, where the edge statuses are not independent, even when conditioning on the degree sequence. 
We prove Theorems~\ref{thm:motifs} and \ref{thm:sqrtsub} for the erased configuration model by using the approximation $\Probn{X_{ij}=1}\approx 1-\me^{-D_iD_j/L_n}$. Therefore, Theorems~\ref{thm:motifs} and \ref{thm:sqrtsub} hold also for the rank-1 inhomogeneous random graph with these connection probabilities instead~\cite{stegehuis2019b}.

A third model that creates simple power-law random graphs, is the hyperbolic random graph where vertices are sampled in a disk, and connected if their hyperbolic distance is sufficiently small~\cite{krioukov2010}. The geometry in the hyperbolic random graph makes the presence of triangles and other subgraphs containing cycles likely.
By Theorem~\ref{thm:sqrtsub}, a complete graph on $k$ vertices occurs $\Theta(n^{\frac{k}{2}(3-\tau)})$ times as a subgraph in $\ECMn$. Interestingly, this is also true for hyperbolic random graphs for $k$ sufficiently large~\cite{Blasius2017}. 
 It would be interesting to investigate the presence of other subgraphs in hyperbolic random graphs. 

\cs{Another class of popular models, which create power-law random graphs dynamically, are those that incorporate preferential
	attachment. In these models, subgraph counts scale significantly differently from the erased configuration model and uniform random graphs~\cite{garavaglia2018}.}

\section{Overview of the proofs}\label{sec:overview}
We now provide an overview of the proof structure of Theorems~\ref{thm:motifs} and \ref{thm:sqrtsub}. 
Our main results study the annealed version $\ECMn$, with random degree sequence. 
In the proofs of Theorems~\ref{thm:motifs} and \ref{thm:sqrtsub}, we often first study the quenched version of $\ECMnD$ instead, where the degree sequence $\boldsymbol{D}$ is fixed. 

We relate $\cj{L_n=\sum_{v\in[n]}D_v}$, the total number of half-edges before erasure, to its expected value $\mu n$ by defining  the event
\begin{equation}\label{eq:Jn}
J_n = \big\{ \abs{L_n-\mu n}\leq n^{2/\tau}\big\}.
\end{equation}
By~\cite[Lemma~2.3]{hofstad2017}, $\Prob{J_n}\to 1$ as $n\to\infty$. When we condition on the degree sequence, we will work on the event $J_n$, so that we can write $L_n=\mu n(1+o(1))$. Similarly, when we work with $\expec_n$ and $\text{Var}_n$, we condition on the event $J_n$. \cs{We do not include $J_n$ into the notation of $\mathbb{P}_n$, since given $\boldsymbol{D}$, $J_n$ either happens with probability one, or with probability zero. This could be treated more formally by denoting
	\begin{equation}
	\mathbb{P}_n(\mathcal{E}) = \mathbbm{1}_{J_n}\mathbb{P}(\mathcal{E}\mid \boldsymbol{D}),
	\end{equation}
	but keep notation light, when using $\mathbb{P}_n$, we always assume the event $J_n$ to hold.}

Denote the indicator that an edge is present between vertices $u$ and $v$ by $X_{u,v}$.
To obtain the probability that a specific subgraph is present on a given set of vertices, we investigate the probability of a set of edges being present in the erased configuration model. In $\ECMnD$ (see the proof of Lemma~\ref{lem:condprob} for a more precise statement),
\begin{equation}\label{eq:pedgeappr}
\Probn{X_{u,v}=1}\approx 1-\me^{-D_uD_v/L_n}.
\end{equation}
However, subgraphs often contain more than just one edge, and edges in $\ECMnD$ are not present independently. In Section~\ref{sec:avoid}, we show that these dependencies are weak, so that we can use the approximation~\eqref{eq:pedgeappr} for all edges in a subgraph as if they were present independently.

We then compute the probability that a subgraph is present on a specific set of vertices as a function of their degrees, which shows that 
	\begin{equation}
	N^\gsub(H,M_n^{(\boldsymbol{\alpha})}(\varepsilon))=\Theta_{\sss{\prob}}\Big( n^{k+(1-\tau)\sum_i\alpha_i} \ \ \prod_{\mathclap{\{i,j\}\in \Ecal_H:\alpha_i+\alpha_j<1}} \ \  n^{\alpha_1+\alpha_j-1}\Big).
	\end{equation}
To prove Theorem~\ref{thm:motifs}(ii) we optimize this over $\boldsymbol{\alpha}=(\alpha_1,\dots,\alpha_k)$. Here $\varepsilon$ does not appear in the scaling, since it is independent of $n$. To prove Theorem~\ref{thm:motifs}(i) for $\varepsilon_n\downarrow 0$, we analyze $N^\gsub(H,M_n^{(\boldsymbol{\alpha})}(\varepsilon))$ in more detail in Section~\ref{sec:proofsec2}.

To prove the sharp asymptotics of Theorem~\ref{thm:sqrtsub}, we compute the contribution to the expectation and the variance of the number of subgraphs in $\ECMnD$ from vertices with degrees proportional to $\sqrt{n}$ in Section~\ref{sec:prooflem1}. We use a second moment method to show that the number of subgraphs concentrates around its expectation in $\ECMnD$. We then investigate the asymptotic behavior of this expectation. A first moment method which shows that the expected contribution to the number of subgraphs from vertices with other degrees is small completes the proof of Theorem~\ref{thm:sqrtsub}.

Theorem~\ref{thm:sqrtsub} for induced subgraphs can be proven similarly, the only difference being that we have to take into account that to form an induced subgraph, some edges are not allowed to be present in $\ECMn$. We explain how this changes the proof of Theorems~\ref{thm:motifs} and \ref{thm:sqrtsub} in more detail in Section~\ref{sec:graphlets}.

\section{Maximum contribution: proof of Theorem~\ref{thm:motifs}(ii)}
\label{sec:maxcont}
\subsection{The probability of avoiding a subgraph}\label{sec:avoid}
The edges of a subgraph are not present independently. The following lemma computes the probability that an edge is not present conditionally on other edges not being present:
\begin{lemma}\label{lem:condprob}
	Fix $m\in\mathbb{N}$ and $\varepsilon>0$. Let $\{\{u_i,v_i\}\}_{i\in[m+1]}$ be such that $u_i,v_i\in[n]$ for all $i\in[m+1]$ and $\{u_{m+1},v_{m+1}\}\neq \{u_i,v_i\}$ for all $i\in[m]$. Let 
	\begin{equation}
	\mathcal{E} = \{X_{u_i,v_i}=0\ \forall i \in [m]  \}.
	\end{equation}
If $D_{u_i},D_{v_i}\leq n^{1/(\tau-1)}/\varepsilon$ for $i\in[m+1]$, then
	\begin{equation}\label{eq:psubbound}
	\Probn{X_{u_{m+1},v_{m+1}}=0\mid \mathcal{E}}=\bigO{\me^{-{D_{u_{m+1}}D_{v_{m+1}}}/{4L_n}}}.
	\end{equation}
	\cs{Furthermore, when $D_{u_{m+1}}D_{v_{m+1}}\leq n/\varepsilon$, for $\gamma\in(\frac{\tau-2}{2(\tau-1)},\frac{\tau-2}{\tau-1})$,
	\begin{equation}\label{eq:psubdetail}
	\begin{aligned}[b]
	\Probn{X_{u_{m+1},v_{m+1}}=0\mid \mathcal{E}}& =\me^{-\frac{D_{u_{m+1}}D_{v_{m+1}}}{L_n}} \left(1+O \bigg(\frac{D_{u_{m+1}}D_{v_{m+1}}}{L_n}n^{-\gamma}\bigg)\right).
	\end{aligned}
	\end{equation}}
\end{lemma}
Throughout the rest of the paper, we mainly use~\eqref{eq:psubbound} to bound the probability that an edge between two high-degree vertices is absent, whereas we use~\eqref{eq:psubdetail} to compute asymptotic identities for the probability that a subgraph is present.

\begin{proof}
	For $m=0$ the claim is proven in~\cite[Eq (4.6) and (4.9)]{hofstad2005}, which states that for two vertices $u$ and $v$ with $D_u>D_v$, 
	\begin{equation}\label{eq:pijsmall}
	\Probn{X_{u,v}=0}=\me^{-D_uD_v/L_n}+O({D_u^2 D_v}/{L_n^2}),
	\end{equation}
and that, by using~\cite[Eq. (4.5)]{hofstad2005} \cs{as well as $L_n-2i+1\leq L_n$ and $1-x\leq \me^{-x}$,}
	\begin{equation}\label{eq:pijlargebound}
	\cs{\Probn{X_{u,v}=0}\leq \prod_{i=1}^{D_u/2}\Big(1-\frac{D_v}{L_n-2i+1}\Big)\leq \me^{-D_uD_v/2L_n}.}
	\end{equation}
Thus we assume that $m\geq 1$. Note that $\Omega:=\{\{u_i,v_i\}\}_{i\in[m]}$ may contain the same vertices multiple times. Denote the number of distinct vertices in ${\{\{u_i,v_i\}\}_{i\in[m]}}$ by $r$, and denote these distinct vertices by $w_1,\ldots,w_r$. Let $u_{m+1},v_{m+1}$ correspond to $w_r$ and $w_{r-1}$ (if they are present in $w_1,\ldots,w_r$ at all). The ordering of the other vertices may be arbitrary. 

We now construct $\ECMnD$ conditionally on the edges $\Omega$ not being present. We pair the half-edges of the erased configuration model attached to $w_1,\ldots,w_r$. First we pair all half-edges adjacent to $w_1$. Since we condition on the edges $\Omega$ not being present, no half-edge from $w_1$ is allowed to pair to any of its neighbors in $\Omega$. After that, we pair all remaining half-edges from $w_2$, conditionally on these half-edges not connecting to one of the neighbors of $w_2$ in $\Omega$, and so on. We continue until all of the forbidden edges $\Omega$ have at least one incident vertex whose half-edges have already been paired. Then, if we pair the rest of the half-edges, we know that none of the edges in $\Omega$ are present. 

Let $B$ denote the number of vertices we have to pair before all of the forbidden edges $\Omega$ have at least one incident vertex whose half-edges have already been paired. We never have to pair half-edges adjacent to $u_{m+1}$ or to $v_{m+1}$ (if they are present in ${\{\{u_i,v_i\}\}_{i\in[m]}}$), since they are last in the ordering, and $\{u_{m+1},v_{m+1}\}$ is not present in ${\{\{u_i,v_i\}\}_{i\in[m]}}$. Therefore, the half-edges incident to all forbidden neighbors of $u_{m+1}$ and $v_{m+1}$ in $\Omega$ have already been paired before arriving at $u_{m+1}$ or $v_{m+1}$. Let $\hat{X}_{u,v}$ denote the number of half-edges between $u$ and $v$ in the configuration model, so that the edge indicator of the erased configuration model can be written as $X_{u,v}=\mathbbm{1}\{\hat{X}_{u,v}>0\}$. Furthermore, let $\mathcal{F}_{\leq s}=\sigma((\hat{X}_{w_{i},j})_{i\leq s, j\in[n]})$ be the information about the pairings that have been constructed up to time $s$. 
	
	After pairing the half-edges incident to vertices in $[B]$, denote 
	\begin{equation}
	\tilde{L}_n=L_n-2\sum_{i\in[B]}(D_{w_i}-\hat{X}_{w_i,w_i}),
	\end{equation}
\cs{which equals the remaining half-edges after pairing the half-edges incident to $(w_i)_{i\in[{B}]}$. Here we subtract $D_{w_i}$ twice, since the pairing of every half-edge removes one half-edge incident to $w_i$, and one other half-edge, unless it is paired to another half-edge incident to $w_i$, giving rise to the term $\hat{X}_{w_i,w_i}$.}
	Define $\tilde{D}_{u_{m+1}}=D_{u_{m+1}}-\sum_{i\in [B]}\hat{X}_{i,u_{m+1}}$, and define ${D}_{\tilde{v}_{m+1}}$ similarly. These quantities are all measurable with respect to $\mathcal{F}_{\leq B}$. The probability that $u_{m+1}$ does not pair to $v_{m+1}$ is the probability that ${u}_{m+1}$ of degree $\tilde{D}_{u_{m+1}}$ does not connect to ${v}_{m+1}$ of degree $\tilde{D}_{v_{m+1}}$ in a configuration model with $\tilde{L}_n$ half-edges. Thus, using~\eqref{eq:pijsmall},
	\begin{equation}\label{eq:probHcond}
	\Probn{X_{u_{m+1},v_{m+1}}=0\mid \mathcal{F}_{\leq B}}= \me^{-\tilde{D}_{u_{m+1}}\tilde{D}_{v_{m+1}}/\tilde{L}_n}+\bigO{{\tilde{D}_{u_{m+1}}^2\tilde{D}_{v_{m+1}}}/{\tilde{L}_n^2}},
	\end{equation}
	where we have assumed w.l.o.g. that $\tilde{D}_{u_{m+1}}\geq \tilde{D}_{v_{m+1}}$. 
%
	
We now proceed to prove~\eqref{eq:psubdetail}. The probability that the $j$th half-edge incident to $w_i$ pairs to $u_{m+1}$ can be bounded as
	\begin{equation}
	\begin{aligned}[b]
	\Probn{j\text{th half-edge pairs to }u_{m+1}}& \leq \frac{D_{u_{m+1}}}{L_n-2j-3-2\sum_{s\in[i-1]}D_{w_s}}\\
	&\leq KD_{u_{m+1}}/L_n,
	\end{aligned}
	\end{equation}
	for some $K>0$. We have to pair at most $D_{w_i}\leq n^{1/(\tau-1)}/\varepsilon$ half-edges, since some of the half-edges incident to $w_i$ may have been used already in previous pairings. Therefore, we can stochastically dominate $\hat{X}_{w_i,u_{m+1}}$ by $Y_{\cs{w_i}}$, where $Y_{\cs{w_i}}\sim\text{Bin}(n^{1/(\tau-1)}/\varepsilon,KD_{u_{m+1}}/L_n)$, so that $\Exp{Y_{w_i}}= K_1n^{-\beta}D_{u_{m+1}}$ for some $K_1$, where $\beta=(\tau-2)/(\tau-1)$.
	
	Choose $\gamma\in(\frac{\tau-2}{2(\tau-1)},\frac{\tau-2}{\tau-1})$. By the Chernoff bound, for some $\tilde{K}>0$,
	\begin{align}
	\Prob{Y_{\cs{w_i}}>K_1n^{-\beta}D_{u_{m+1}}(1+n^\gamma)}\leq \me^{-\tilde{K}n^{2\gamma-\beta}D_{u_{m+1}}}.
	\end{align}
	Define the events
	\begin{align}
	\mathcal{B}_{n,u}& =\{\exists i\in[B]: \hat{X}_{w_i,u_{m+1}}>K_1n^{-\beta}D_{u_{m+1}}(1+n^\gamma)\}, \\
	 \mathcal{B}_{n,v}& =\{\exists i\in[B]: \hat{X}_{w_i,v_{m+1}}>K_1n^{-\beta}D_{v_{m+1}}(1+n^\gamma)\},
	\end{align}
and let $\mathcal{B}_{n,u}^c$ and $\mathcal{B}_{n,v}^c$ denote their respective complements, so that, by a union bound,
	\begin{align}\label{eq:Bnuc}
	\Prob{\mathcal{B}_{n,u}^c}\geq 1- B \me^{-\tilde{K}n^{2\gamma-\beta}D_{u_{m+1}}}.
	\end{align}	
	On the event $\mathcal{B}_{n,u}^c$,
	\begin{equation}
	\tilde{D}_{u_{m+1}}\geq D_{u_{m+1}}\big(1-\sum_{i\in[B]}\hat{X}_{w_i,u_{m+1}}\big)=D_{u_{m+1}}(1+O(n^{\gamma-\beta})).
	\end{equation}
	
Similarly, $\tilde{D}_{v_{m+1}}=D_{v_{m+1}}(1+O(n^{\gamma-\beta}))$ on $\mathcal{B}_{n,v}^c$, where $\Prob{\mathcal{B}_{n,v}^c}\geq 1- B \me^{-\tilde{K}n^{2\gamma-\beta}D_{v_{m+1}}}$.
	Then, when $D_{u_{m+1}}D_{v_{m+1}}=O(n)$ as assumed for \eqref{eq:psubdetail},~\eqref{eq:probHcond} becomes
	\begin{align}\label{eq:Pxuxvsmall}
	&\Probn{X_{u_{m+1},v_{m+1}}=0\mid \mathcal{F}_{B+1},\mathcal{B}_{n,u}^c,\mathcal{B}_{n,v}^c}\nonumber\\
	& =\me^{-\frac{D_{{u}_{m+1}}D_{{v}_{m+1}}}{L_n}(1+O(n^{-\gamma}))}+O\bigg(\frac{D_{{u}_{m+1}}^2D_{{v}_{m+1}}}{{L}_n^2}\bigg)\nonumber\\
	& =\me^{-\frac{D_{{u}_{m+1}}D_{{v}_{m+1}}}{{L}_n}}\left(1+O\left(\frac{D_{u_{m+1}}D_{v_{m+1}}}{L_n}n^{-\gamma}\right)\right),
	\end{align}
	where we have used that $D_{u_{m+1}}=O(n^{1/(\tau-1)})$.
Furthermore, $2\gamma-\beta>0$, whereas by assumption $D_{u_{m+1}}D_{v_{m+1}}/L_n=O(1)$, so that~\eqref{eq:Bnuc} together with~\eqref{eq:Pxuxvsmall} proves~\eqref{eq:psubdetail}.

To prove~\eqref{eq:psubbound}, we use~\eqref{eq:pijlargebound} and the fact that on the event $\mathcal{B}_{n,u}^c\cap \mathcal{B}_{n,v}^c$,  ${D}_{v_{m+1}}{D}_{u_{m+1}}\geq \tilde{D}_{v_{m+1}}\tilde{D}_{u_{m+1}}/2$ for $n$ sufficiently large to obtain
\begin{align}
\Probn{X_{u_{m+1},v_{m+1}}=0\mid \mathcal{F}_{B+1}\mathcal{B}_{n,u}^c,\mathcal{B}_{n,v}^c} & \leq \me^{-\tilde{D}_{u_{m+1}}\tilde{D}_{v_{m+1}}/2\tilde{L}_n}\nonumber\\
& \leq   \me^{-\tilde{D}_{u_{m+1}}\tilde{D}_{v_{m+1}}/2{L}_n}\nonumber\\
& \leq   \me^{-{D}_{u_{m+1}}{D}_{v_{m+1}}/4{L}_n}.
\end{align}
Combining this with~\eqref{eq:Bnuc} and the fact that $2\gamma-\beta>0$ completes the proof of~\eqref{eq:psubbound}.
\end{proof}

\subsection{An optimization problem}\label{sec:opt}
We now use Lemma~\ref{lem:condprob} to study the probability that a subgraph is present on vertices $(v_1, \ldots, v_k)$ of specific degrees. 
Assume that $D_{{v_i}}\in[\varepsilon,1/\varepsilon]n^{\alpha_i}$ with $\alpha_i\in[0,1/(\tau-1)]$ for all $i$, so that $D_{{v_i}}=\Theta(n^{\alpha_i})$.

Let $H$ be a subgraph on $k$ vertices labeled as $1,\ldots,k$, and with $m$ edges labeled a ${\Ecal_H=\big\{\{i_1,j_1\},\ldots,\{i_m,j_m\}\big\}}$. Furthermore, let $\ECMnD|_{\boldsymbol{v}}$ be the induced subgraph of $\ECMnD$ on the vertices ${\boldsymbol{v}=(v_1,\ldots,v_k)}$. We aim to study the probability that this occurs.

When $\alpha_i+\alpha_j<1$, by~\eqref{eq:pijsmall},
	\begin{equation}
	\label{eq:pijsmallalpha}
	\Probn{X_{v_i,v_j}=1}=\big(1-\me^{-\Theta(n^{\alpha_i+\alpha_j-1})}\big)(1+o(1))=\Theta\left(n^{\alpha_i+\alpha_j-1}\right).
	\end{equation}
Furthermore, by~\eqref{eq:pijsmall}, $\Probn{X_{v_i,v_j}=1}=\Theta(1)$ and $\Probn{X_{v_i,v_j}=0}=\Theta(1)$ when $\alpha_i+\alpha_j=1$. When $\alpha_i+\alpha_j>1$ instead, by~\eqref{eq:pijlargebound},
	\begin{equation}
	\label{eq:pijlarge2}
	\Probn{X_{v_i,v_j}=1}=1-O\big(\me^{-n^{\alpha_i+\alpha_j-1}/(4\mu )}\big),
	\end{equation}
so that it equals 1 minus a stretched exponentially small term.
For vertices $v_i,v_j\in [n]$ with $D_{v_i}\in[\varepsilon,1/\varepsilon]n^{\alpha_i}$ and $D_{v_j}\in[\varepsilon,1/\varepsilon]n^{\alpha_j}$, denote 
	\begin{equation}
	w_{i,j}=n^{\alpha_i+\alpha_j-1-\gamma},
	\end{equation}
	 with $\gamma$ as in~\eqref{eq:psubdetail}.
By Lemma~\ref{lem:condprob}, for any set of $m$ edges in $[n]$, and $D_{u_p}\in[\varepsilon,1/\varepsilon]n^{\alpha_{i_p}}, D_{v_p}\in[\varepsilon,1/\varepsilon]n^{\alpha_{j_p}}$,
	\begin{align}
	\Probn{X_{u_1,v_1}=\cdots = X_{u_m,v_m}=0} & = \prod_{\mathclap{p\colon  \alpha_{i_p}+\alpha_{j_p}<1}}  (1+O(w_{i_p,j_p}))\big(1-\Theta\big(n^{\alpha_{i_p}+\alpha_{j_p}-1}\big)\big)\nonumber \\
	& \quad \times  \prod_{\mathclap{p\colon \alpha_{i_p}+\alpha_{j_p}=1}}\me^{-{D_{u_{i_p}}D_{v_{j_p}}}/(\mu n)}(1+O(n^{-({\tau-2})/({\tau-1})}))\nonumber\\
	& \quad \times \prod_{\mathclap{p\colon \alpha_{i_p}+\alpha_{j_p}>1}}O\Big(\me^{-n^{\alpha_{i_p}+\alpha_{j_p}-1}/(4\mu)}\Big).
	\end{align}
For ease of notation, we denote
	\begin{equation}
	q(i,j)= \begin{cases}
	(1+O(w_{i,j}))\big(1-\Theta\big(n^{\alpha_{i}+\alpha_{j}-1}\big)\big) & \text{if }\alpha_i+\alpha_j< 1,\\
	\me^{-D_{v_i}D_{v_j}/(\mu n)}(1+O(n^{-\gamma})) &\text{if }\alpha_i+\alpha_j=1,\\
	O\Big(\me^{-n^{\alpha_{i}+\alpha_{j}-1}/(4\mu)}\Big) & \text{if }\alpha_i+\alpha_j>1.
	\end{cases}
	\end{equation}

We write the probability that $H$ is present on a specified subset of vertices $\boldsymbol{v}=(v_1,\ldots,v_k)$ as
	\begin{equation}\label{eq:phsub2}
	\begin{aligned}[b]
	&\Probn{\ECMnD|_{\boldsymbol{v}}\supseteq \Ecal_H}\\
	& = 1-\sum_{l=1}^m\prob_n(X_{v_{i_l},v_{j_l}}=0)+\sum_{{l\neq p}}\prob_n(X_{v_{i_l},v_{j_l}}=X_{v_{i_p},v_{j_p}}=0) \\
	&\quad -\sum_{\mathclap{{l\neq p\neq r}}}\prob_n({X_{v_{i_l},v_{j_l}}=X_{v_{i_p},v_{j_p}}=X_{v_{i_r},v_{i_r}}=0})+\cdots \\
	&\quad + (-1)^{m}\prob_n({X_{v_{i_1},v_{j_1}}=\cdots = X_{v_{i_m},v_{j_m}}}=0)\\
	& = 1-\sum_{l=1}^mq(i_l,j_l)+\sum_{l\neq p}q(i_l,j_l)q(i_p,j_p)\\
	&\quad -\sum_{\mathclap{l\neq p\neq r}}q(i_l,j_l)q(i_p,j_p)q({i_r,j_r})+\cdots  + (-1)^{m} \prod_{l\in[m]}q(i_l,j_l)\\
	& = \prod_{l\in[m]}\big(1-q(i_l,j_l)\big)=  \Theta\bigg(\prod_{\{i,j\}\in \Ecal_H\colon \alpha_{i}+\alpha_j<1}n^{\alpha_i+\alpha_j-1}\bigg).
	\end{aligned}
	\end{equation}
Here we have used that, for $\alpha_i+\alpha_j<1$,
	\begin{equation}
	1-q(i,j)=1-(1-\Theta(n^{\alpha_i+\alpha_j-1}))(1+O(w_{i,j}))=\Theta(n^{\alpha_i+\alpha_j-1}),
	\end{equation}
and that, for $\alpha_i+\alpha_j>1$,
	\begin{equation}
	1-q(i,j)=1-O\big(\me^{-n^{\alpha_{i}+\alpha_{j}-1}/(4\mu)}\big)=1+o(1).
	\end{equation}
Furthermore, for $D_{v_i}\in[ \varepsilon,1/\varepsilon] n^{\alpha_i}$, $D_{v_j}\in[ \varepsilon,1/\varepsilon] n^{\alpha_j}$ and $\alpha_i+\alpha_j=1$, 
	\begin{equation}
	1-q(i,j)=(1+O(n^{-\gamma}))\big(1-\me^{-D_iD_j/(\mu n)}\big)=\Theta(1),
	\end{equation}
so that edges $\{i,j\}$ with $\alpha_i+\alpha_j\geq 1$ do not contribute to the order of magnitude of the last term in~\eqref{eq:phsub2}.
The degrees are an i.i.d.\ sample from a power-law distribution. Therefore,
	\begin{align}
	\Prob{D_1\in[\varepsilon,1/\varepsilon] (\mu n)^{\alpha}}& \cs{= \sum_{x=\varepsilon (\mu n)^\alpha}^{(\mu n)^\alpha/\varepsilon}cx^{-\tau}(1+o(1))}\nonumber \\
	&=\cs{O(1)}\int_{\varepsilon (\mu n)^\alpha}^{1/\varepsilon(\mu n)^\alpha}cx^{-\tau}\dd x = K(\varepsilon)\cs{O}((\mu n)^{\alpha(1-\tau)}),
	\end{align}
for some $K(\varepsilon)$ not depending on $n$. 
The number of vertices with degrees in $[\varepsilon,1/\varepsilon](\mu n)^\alpha$ is 
Binomial$(n,\Prob{D_1\in[\varepsilon,1/\varepsilon] (\mu n)^{\alpha}})$, so that the number of vertices with degrees in $[\varepsilon,1/\varepsilon](\mu n)^\alpha$ is $\Theta_{\sss{\prob}}(n^{(1-\tau)\alpha+1})$ for $\alpha\leq \frac{1}{\tau-1}$. Then, for $M_n^{(\boldsymbol{\alpha})}$ as in~\eqref{eq:Mnalph},
	\begin{equation}\label{eq:numhdeg}
	\# \text{ sets of vertices with degrees in }M_n^{(\boldsymbol{\alpha})}=\Theta_{\sss{\prob}}( n^{k+(1-\tau)\sum_i\alpha_i}).
	\end{equation}
Combining~\eqref{eq:phsub2} and~\eqref{eq:numhdeg} yields
	\begin{equation}\label{eq:Nalph}
	N^\gsub(H,M_n^{(\boldsymbol{\alpha})}(\varepsilon))=\Theta_{\sss{\prob}}\Big( n^{k+(1-\tau)\sum_i\alpha_i} \ \ \prod_{\mathclap{\{i,j\}\in \Ecal_H:\alpha_i+\alpha_j<1}} \ \  n^{\alpha_{i}+\alpha_j-1}\Big).
	\end{equation}
The maximum contribution is obtained for $\boldsymbol{\alpha}$ that maximizes
	\begin{equation}\label{eq:maxalph}
	\begin{aligned}[b]
	&\max (1-\tau)\sum_{i}\alpha_i +\sum_{\{i,j\}\in \Ecal_H\colon \alpha_i+\alpha_j<1}(\alpha_i+\alpha_j-1)\\
	&\qquad\quad\text{s.t. } \alpha_i\in [0,\tfrac{1}{\tau-1}] \ \forall i.
	\end{aligned}
	\end{equation} 
The following lemma shows that this optimization problem attains its maximum for specific values of the exponents $\alpha_i$:

\begin{lemma}[Maximum contribution to subgraphs]\label{lem:maxmotif}
	Let $H$ be a connected graph on $k$ vertices. If the solution to~\eqref{eq:maxalph} is unique, then the optimal solution satisfies $\alpha_i\in\{0,\tfrac{\tau-2}{\tau-1},\tfrac{1}{2},\tfrac{1}{\tau-1}\}$ for all $i$. If it is not unique, then there exist at least 2 optimal solutions with $\alpha_i\in\{0,\tfrac{\tau-2}{\tau-1},\tfrac{1}{2},\tfrac{1}{\tau-1}\}$  for all $i$. In any optimal solution $\alpha_i=0$ if and only if vertex $i$ has degree one in $H$.
\end{lemma}
\begin{proof}
	Defining $\beta_i=\alpha_i-\tfrac{1}{2}$ yields that~\eqref{eq:maxalph} equals
	\begin{equation}\label{eq:maxeqbeta}
	\max \frac{1-\tau}{2}k+ (1-\tau)\sum_{i}\beta_i +\sum_{\{i,j\}\in \Ecal_H\colon \beta_i+\beta_j<0}(\beta_i+\beta_j),
	\end{equation} 
	over all possible values of $\beta_i\in[-\tfrac12,\tfrac{3-\tau}{2(\tau-1)}]$. Then, we have to prove that $\beta_i\in\{-\tfrac 12, \tfrac{\tau-3}{2(\tau-1)},0,\tfrac{3-\tau}{2(\tau-1)}\}$ for all $i$ in the optimal solution.
	Note that~\eqref{eq:maxeqbeta} is a piecewise linear function in $\beta_1,\dots,\beta_k$. Therefore, if~\eqref{eq:maxeqbeta} has a unique maximum, then it must be attained at the boundary for $\beta_i$ or at a border of one of the linear sections. Thus, any unique optimal value of $\beta_i$ satisfies $\beta_i=-\tfrac{1}{2}$, $\beta_i=\tfrac{\tau-3}{2(\tau-1)}$ or $\beta_i+\beta_j=0$ for some $j$.
	We ignore the constant factor of $(1-\tau)\tfrac{k}{2} $ in~\eqref{eq:maxeqbeta}, since it does not influence the optimal $\beta$ values.
	Rewriting~\eqref{eq:maxeqbeta} without the constant factor yields

	\begin{equation}\label{eq:maxbeta}
	\max \sum_i \beta_i\Big(1-\tau +|\{ s\in[k]: (s,i)\in \Ecal_H\text{ and }\beta_s<-\beta_i\}|\Big).
	\end{equation}
	The proof of the lemma then consists of three steps: \\
	\textit{Step 1.} Show that $\beta_i=-\tfrac{1}{2}$ if and only if vertex $i$ has degree 1 in $H$ in any optimal solution.\\
	\textit{Step 2.} Show that any unique solution does not contain $i$ with $\abs{\beta_i}\in(0,\tfrac{3-\tau}{2(\tau-1)})$.\\
	\textit{Step 3.} Show that any optimal solution that is not unique can be transformed into two different optimal solutions with $\beta_i\in\{-\tfrac 12, \tfrac{\tau-3}{2(\tau-1)},0,\tfrac{3-\tau}{2(\tau-1)}\}$ for all $i$.\\
	
	\textit{Step 1.}
	Let $i$ be a vertex of degree 1 in $H$, and $j$ be the neighbor of $i$. Let $N_j$ denote the number of edges in $H$ from $j$ to other vertices $v$ not equal to $i$ with $\beta_v<-\beta_j$. The contribution from vertices $i$ and $j$ to~\eqref{eq:maxbeta} is
	\begin{equation}
	\beta_j(1-\tau+N_j)+\beta_i(1-\tau+\ind{\beta_i>-\beta_j})+\beta_j\ind{\beta_i<-\beta_j}.
	\end{equation} 
	For any value of $\beta_j\in[-\tfrac 12,\tfrac{3-\tau}{2(\tau-1)}]$, this contribution is maximized when choosing $\beta_i=-\tfrac{1}{2}$. 
	Thus, $\beta_i=- \tfrac{1}{2}$ in the optimal solution if the degree of vertex $i$ is one.
	
Let $i$ be a vertex in $V_H$, and recall that $d_i$ denotes the degree of $i$ in $H$. Let $i$ be such that $d_i\geq 2$ in $H$, and suppose that $\beta_i<\tfrac{\tau-3}{2(\tau-1)}$. Because the maximal value of $\beta_j$ for $j\neq i$ is $\tfrac{3-\tau}{2(\tau-1)}$, the contribution to the $i$th term of~\eqref{eq:maxbeta} is
	\begin{equation}
	-\tfrac{1}{2}(1-\tau+d_i)<0,
	\end{equation}
		irrespective of the values of the $\beta_j$, $j\neq i$.
	Increasing $\beta_i$ to $\tfrac{\tau-3}{2(\tau-1)}$ then gives a higher contribution. 
	\cs{Thus, $\beta_i\geq \tfrac{\tau-3}{2(\tau-1)}$ when $d_i\geq 2$.}

	\textit{Step 2.}
	Now we show that when the solution to~\eqref{eq:maxbeta} is unique, it is never optimal to have $\abs{\beta}\in(0,\tfrac{3-\tau}{2(\tau-1)})$. 
	Let 
	\begin{equation}\label{eq:tildebeta}
	\tilde{\beta}=\min_{i:\abs{\beta_i}>0}\abs{\beta_i}.
	\end{equation}
	Let  $N_{\tilde{\beta}^-}$ denote the number of vertices with their $\beta$ value equal to $-\tilde{\beta}$, and $N_{\tilde{\beta}^+}$ the number of vertices with value $\tilde{\beta}$, where $N_{\tilde{\beta}^+}+N_{\tilde{\beta}^-}\geq 1$. Furthermore, let $E_{\tilde{\beta}^-}$ denote the number of edges from vertices with value $-\tilde{\beta}$ to other vertices $j$ such that $\beta_j<\tilde{\beta}$, and $E_{\tilde{\beta}^+}$ the number of edges from vertices with value $\tilde{\beta}$ to other vertices $j$ such that $\beta_j<-\tilde{\beta}$. Then, the contribution from these vertices to~\eqref{eq:maxbeta} is
	\begin{equation}\label{eq:Nbeta}
	\tilde{\beta}\big((1-\tau)(N_{\tilde{\beta}^+}-N_{\tilde{\beta}^-})+E_{\tilde{\beta}^+}-E_{\tilde{\beta}^-}\big).
	\end{equation}
	Because we assume ${\beta}$ to be optimal, and the optimum to be unique, the value inside the brackets cannot equal zero. The contribution is linear in $\tilde{\beta}$ and it is the optimal contribution, and therefore $\tilde{\beta}\in\{0,\tfrac{3-\tau}{2(\tau-1)}\}$.
	This shows that $\beta_i\in\{\tfrac{\tau-3}{2(\tau-1)},0,\tfrac{3-\tau}{2(\tau-1)}\}$ for all $i$ such that $d_i\geq 2$.
	
	\textit{Step 3.}
	\cs{
	Suppose that the solution to~\eqref{eq:maxbeta} is not unique. Suppose that $\beta_*$ appears in one of the optimizers of~\eqref{eq:maxbeta}. In the same notation as in~\eqref{eq:Nbeta}, the contribution from vertices with $\beta$-values $\beta_*$ and $-\beta_*$ equals
	\begin{equation}
	{\beta}_*\Big[(1-\tau)\big(N_{{\beta}_*^+}-N_{{\beta}_*^-}\big)+E_{{\beta}_*^+}-E_{{\beta}_*^-}\Big]. 
	\end{equation}
	Since this contribution is linear in $\beta_*$, the contribution of these vertices can only be non-unique if the term within the square brackets equals zero. 
	Thus, for the solution to~\eqref{eq:maxbeta} to be non-unique, there must exist $\hat{\beta}_1,\ldots,\hat{\beta}_s>0$ for some $s\geq 1$ such that  
	\begin{equation}
	\hat{\beta}_j\Big((1-\tau)\big(N_{\hat{\beta}_j^+}-N_{\hat{\beta}_j^-}\big)+E_{\hat{\beta}_j^+}-E_{\hat{\beta}_j^-}\Big)=0 \quad \forall j\in[s].
	\end{equation}
	Setting all $\hat{\beta}_j=0$ and setting all $\hat{\beta}_j=\tfrac{3-\tau}{2(\tau-1)}$ are both optimal solutions. Thus, if the solution to~\eqref{eq:maxbeta} is not unique, at least 2 solutions exist with $\beta_i\in\{\tfrac{\tau-3}{2(\tau-1)},0,\tfrac{3-\tau}{2(\tau-1)}\}$ for all $i\in V_H$. }
\end{proof}

\begin{proof}[Proof of Theorem~\ref{thm:motifs}(ii) for subgraphs]
	Let $\boldsymbol{\alpha}^\gsub$ be the unique optimizer of~\eqref{eq:maxalph}. 
	By Lemma~\ref{lem:maxmotif}, the maximal value of~\eqref{eq:maxalph} is attained by partitioning $V_H\setminus V_1$ into the sets $S_1,S_2,S_3$ such that vertices in $S_1$ have $\alpha_i^\gsub=\tfrac{\tau-2}{\tau-1}$, vertices in $S_2$ have $\alpha_i^\gsub =\tfrac{1}{\tau-1}$, vertices in $S_3$ have $\alpha_i^\gsub=\tfrac{1}{2}$ and vertices in $V_1$ have $\alpha_i^\gsub =0$. Then, the edges with $\alpha_i^\gsub+\alpha_j^\gsub <1$ are edges inside $S_1$, edges between $S_1$ and $S_3$ and edges from degree 1 vertices. Recall that the number of edges inside $S_1$ is denoted by $E_{S_1}$, the number of edges between $S_1$ and $S_3$ by $E_{S_1,S_3}$ and the number of edges between $V_1$ and $S_i$ by $E_{S_1,V_1}$. Then we can rewrite~\eqref{eq:maxalph} as
	\begin{equation}
	\label{eq:maxtemp}
	\begin{aligned}[b]
	\max_{\mathcal{P}} \ &\Big[(1-\tau) \left(\frac{\tau-2}{\tau-1}\abs{S_1}+\frac{1}{\tau-1}\abs{S_2}+\tfrac 12 \abs{S_3}\right)+\frac{\tau-3}{\tau-1}E_{S_1}\\
	&\qquad +\frac{\tau-3}{2(\tau-1)}E_{S_1,S_3}
	-\frac{E_{S_1,V_1}}{\tau-1}-\frac{\tau-2}{\tau-1}E_{S_2,V_1}-\frac 12 E_{S_3,V_1}\Big],
	\end{aligned}
	\end{equation}
over all partitions $\mathcal{P}=(S_1,S_2,S_3)$ of $V_H\setminus V_1$. Using that $|S_3|=k-\abs{S_1}-\abs{S_2}-k_1$, ${E_{S_3,V_1}=k_1-E_{S_1,V_1}-E_{S_2,V_1}}$, where $k_1=\abs{V_1}$ and extracting a factor $(3-\tau)/2$ shows that this is equivalent to
	\begin{equation}
	\label{eq:maxtemp2}
	\begin{aligned}[b]
	&\frac{1-\tau}{2}k+	\max_{\mathcal{P}} \ \frac{(3-\tau)}{2}\Big( \abs{S_1}-\abs{S_2}+\frac{\tau-2}{3-\tau} k_1-\frac{2E_{S_1}+E_{S_1,S_3}}{\tau-1}  \\
	&\qquad\quad -\frac{E_{S_1,V_1}-E_{S_2,V_1}}{\tau-1}\Big).
	\end{aligned}
	\end{equation}
Since $k$ and $k_1$ are fixed and $3-\tau>0$, we need to maximize
	\begin{equation}
	\label{eq:maxeq}
	B^\gsub(H)=\max_{\mathcal{P}}\left[\abs{S_1}-\abs{S_2}-\frac{2E_{S_1}+E_{S_1,S_3}+E_{S_1,V_1}-E_{S_2,V_1}}{\tau-1}\right],
	\end{equation}
which equals \eqref{eq:maxeqsub}.
	By~\eqref{eq:Nalph}, the contribution of the maximum is then given by 
	\begin{equation}\label{eq:maxcontrscaling}
	n^{\frac{3-\tau}{2}(k+B^\gsub(H))+\frac{\tau-2}{2}k_1}=n^{\frac{3-\tau}{2}(k_{2+}+B^\gsub(H))+{k_1/2}} ,
	\end{equation}
which proves Theorem~\ref{thm:motifs}(ii) for subgraphs. 
\end{proof}

\section{Proof of Theorem~\ref{thm:sqrtsub}}\label{sec:proof2}
 Define the special case of $M_n^{\sss{(\boldsymbol{\alpha})}}(\varepsilon)$ of~\eqref{eq:Mnalph} where $\alpha_i=\tfrac{1}{2}$ for all $i\in V_H=[k]$ as
	\begin{equation}
		W_n^k(\varepsilon)=\{(v_1,\ldots,v_k)\colon D_{{v_s}}\in[\varepsilon,1/\varepsilon]\sqrt{\mu n} \quad \forall s \in[k]\},
	\end{equation}
\cs{and let $\bar{W}_n^k(\varepsilon)$ denote the complement of $W_n^k(\varepsilon)$.}
	Denote the number of subgraphs $H$ with all vertices in $W_n^k(\varepsilon)$ by $N^\gsub(H,W_n^k(\varepsilon))$. 
\begin{lemma}[Major contribution to subgraphs]\label{lem:convNH}
	Let $H$ be a connected graph on $k{\geq 3}$ vertices such that~\eqref{eq:maxeqsub} is uniquely optimized at $S_3=[k]$, so that $B^\gsub(H)=0$. Then,
	\begin{enumerate}[(i)]
	\item \label{lem:convNH1} the number of subgraphs with vertices in $W_n^k(\varepsilon)$ satisfies\cs{
	\begin{align}
		\frac{N^\gsub(H,W_n^k(\varepsilon))}{n^{\frac{k}{2}(3-\tau)}} 
		= & (1+\op(1))c^k\mu^{-\frac{k}{2}(\tau-1)} \int_{\varepsilon}^{1/\varepsilon}\!\!\cdots \int_{\varepsilon}^{1/\varepsilon}(x_1\cdots x_k)^{-\tau}\nonumber\\
		& \times \prod_{\mathclap{\{i,j\}\in \Ecal_H}}(1-\me^{-x_ix_j})\dd x_1\cdots \dd x_k +f_n(\varepsilon),
	\end{align}
for some function $f_n(\varepsilon)$ such that, for any $\delta>0$, 
	\begin{equation}\lim_{\varepsilon\searrow 0}\limsup_{n\to\infty}\Prob{f_n(\varepsilon)>\delta\mid J_n}=0;
	\end{equation}
	}
	\item \label{lem:Afinite}
$A^\gsub(H)$ defined in~\eqref{eq:Asub} satisfies $A^\gsub(H)<\infty$.
	\end{enumerate}
	\end{lemma}

The proof of Lemma~\ref{lem:convNH} can be found in Section~\ref{sec:prooflem1}. 
We now prove Theorem~\ref{thm:sqrtsub} using this lemma. 

\begin{proof}[Proof of Theorem~\ref{thm:sqrtsub}]
	We start by studying the expected number of subgraphs with vertices outside $W_n^k(\varepsilon)$. First, we investigate the expected number of subgraphs in the case where vertex 1 of the subgraph has degree smaller than $\varepsilon\sqrt{\mu n}$. 
	\cs{Similarly to~\eqref{eq:phsub2}, we can use Lemma~\ref{lem:condprob} to show that the probability that $H$ is present on a specified subset of vertices $\boldsymbol{v}=(v_1,\ldots,v_k)$ can be written as
		\begin{align}
		\Probn{\ECMnD|_{\boldsymbol{v}}\supseteq \Ecal_H} & =\Theta \Big( \prod_{\{i,j\}\in \Ecal_H\colon D_{v_i}D_{v_j}<L_n}(1-\me^{-D_{v_i}D_{v_j}/L_n})\Big)
		\nonumber\\
		& =\Theta\Bigg( \prod_{\{i,j\}\in \Ecal_H}(1-\me^{-D_{v_i}D_{v_j}/L_n})\Bigg).
		\end{align}
	
}
	
Furthermore, by~\eqref{D-tail}, there exists $C_0$ such that $\Prob{D=k}\leq C_0k^{-\tau}$ for all $k$. Let $I^\gsub(H,\boldsymbol{v})=\ind{\ECMnD|_{\boldsymbol{v}}\supseteq \Ecal_H},$ so that $N^\gsub(H)=\sum_{\boldsymbol{v}} I^\gsub(H,\boldsymbol{v})$. Then, the expected number of subgraphs in the case where vertex 1 of the subgraph has degree smaller than $\varepsilon\sqrt{\mu n}$ is bounded by

\begin{equation}
\begin{aligned}
 &\sum_{\boldsymbol{v}}\Exp{I^\gsub(H,\boldsymbol{v})\ind{D_{v_1}<\varepsilon\sqrt{\mu n}}\mid J_n}\\
	&\leq \Theta(1)n^k\int_{1}^{\varepsilon\sqrt{\mu n}}\int_{1}^{\infty}\cdots\int_{1}^{\infty}(x_1\cdots x_k)^{-\tau} \ \prod_{\mathclap{\{i,j\}\in \Ecal_H}} \ (1-\me^{-x_ix_j/(\mu n)})\dd x_1\cdots \dd x_k\\
	&=\Theta(1)n^k(\mu n)^{\frac{k}{2}(1-\tau)} \int_{0}^{\varepsilon}\int_{0}^{\infty}\cdots\int_{0}^{\infty}(t_1\cdots t_k)^{-\tau} \ \prod_{\mathclap{\{i,j\}\in \Ecal_H}} \ (1-\me^{-t_it_j})\dd t_1\cdots \dd t_k\\
	& = \bigO{n^{\frac{k}{2}(3-\tau)}}h_1(\varepsilon),
	\end{aligned}
	\end{equation}
where $h_1(\varepsilon)$ is a function of $\varepsilon$. By Lemma~\ref{lem:convNH}\ref{lem:Afinite}, $h_1(\varepsilon)\to 0$ as $\varepsilon\searrow 0$. We can bound the situation where one of the other vertices has degree smaller than $\varepsilon\sqrt{n}$, or where one of the vertices has degree larger than $\sqrt{n}/\varepsilon$, similarly. This yields
	\begin{equation}
	\Exp{N^\gsub(H,\bar{W}_n^k(\varepsilon))\mid J_n} = \bigO{n^{\frac{k}{2}(3-\tau)}}h(\varepsilon),
	\end{equation}
for some function $h(\varepsilon)$ not depending on $n$ such that $h(\varepsilon)\to 0$ when $\varepsilon\searrow 0$. Then, by the Markov inequality, conditionally on $J_n$,
	\begin{equation}
	\begin{aligned}[b]
	N^\gsub(H,\bar{W}_n^k(\varepsilon))=h(\varepsilon)\bigOp{n^{\frac{k}{2}(3-\tau)}}.
	\end{aligned}
	\end{equation}
Therefore, for any $\delta>0$,
	\begin{equation}
	\limsup_{\varepsilon\to 0}\limsup_{n\to\infty} \Prob{\frac{N^\gsub(H,\bar{W}_n^k(\varepsilon))}{n^{k(3-\tau)/2}}>\delta \mid J_n}=0.
	\end{equation}
Combining this with the fact that $\Prob{J_n}\to 1$ and Lemma~\ref{lem:convNH}\ref{lem:convNH1} gives
	\begin{align}
	\frac{N^\gsub(H)}{n^{\frac{k}{2}(3-\tau)}}\plim & c^k\mu^{-\frac{k}{2}(\tau-1)}\! \int_{0}^{\infty}\! \cdots\!  \int_{0}^{\infty}(x_1,\cdots x_k)^{-\tau}\prod_{\mathclap{\{i,j\}\in \Ecal_{H}}} \ (1-\me^{-x_ix_j})	\dd x_1\cdots \dd x_k.
	\end{align}
\end{proof}

\section{Major contribution to subgraphs: proof of Lemma~\ref{lem:convNH}}\label{sec:prooflem1}
We first prove Lemma~\ref{lem:convNH}(i). We compute the expected value of the number of subgraphs in the quenched sense in Lemmas~\ref{lem:condexsub} and~\ref{lem:convsub}. Then, we study the variance of the number of subgraphs in the quenched sense in Lemma~\ref{lem:varsub}. 
	Together, these lemmas prove Lemma~\ref{lem:convNH}(i). 


\subsection{Conditional expectation}
In this section, we study the expected number of subgraphs in $\ECMnD$. 
Let $H$ be a subgraph on $k$ vertices, labeled as ${[k]}$, and $m$ edges, denoted by $e_1={\{i_1,j_1\},\ldots,e_m=\{i_m,j_m\}}$.

\begin{lemma}[Conditional expectation of subgraphs]\label{lem:condexsub}
	Let $H$ be a subgraph such that~\eqref{eq:maxeqsub} has a unique maximum, attained at $S_3^\gsub=[k]$ so that $B^\gsub(H)=0$. Then, on the event $J_n$ defined in~\eqref{eq:Jn},
	\begin{equation}\label{eq:condex}
	\Expn{N^\gsub(H,W_n^k(\varepsilon))}=\sum_{(v_1,\ldots,v_k)\in W_n^k(\varepsilon)}\prod_{\{i,j\}\in \Ecal_H}(1-\me^{-D_{v_i}D_{v_j}/L_n})(1+o(1)).
	\end{equation}
\end{lemma}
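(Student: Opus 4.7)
My plan is to expand $\Expn{N^\gsub(H,W_n^k(\varepsilon))}$ as a sum over ordered $k$-tuples of distinct vertices whose degrees lie in $[\varepsilon,1/\varepsilon]\sqrt{\mu n}$, and for each tuple to compute the conditional probability that $H$ is present by inclusion--exclusion on the events that edges of $H$ are absent, approximating each such joint absence probability with Lemma~\ref{lem:condprob}. The product structure of the approximation will collapse the inclusion--exclusion sum back into a single product, after which I extract a uniform multiplicative error.

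Fix a tuple $(i_1,\dots,i_k)\in W_n^k(\varepsilon)$ and enumerate the edges of $H$ on this tuple as $e_1,\dots,e_m$, where $m=|E_H|$ is a constant depending only on $H$. By inclusion--exclusion on the events $\{X_{e_\ell}=1\}$,
\begin{equation*}
\Probn{H\text{ present on }(i_1,\dots,i_k)}=\sum_{S\subseteq[m]}(-1)^{|S|}\Probn{X_{e_\ell}=0\;\forall\ell\in S}.
\end{equation*}
Iterating the chain rule and applying the second estimate of Lemma~\ref{lem:condprob} to each conditional factor---its hypothesis $D_uD_v\leq n/\varepsilon'$ is satisfied uniformly on $W_n^k(\varepsilon)$ with $\varepsilon'=\varepsilon^2/\mu$, and the degrees are far below $n^{1/(\tau-1)}$ since $\tau<3$---yields for every $S$,
\begin{equation*}
\Probn{X_{e_\ell}=0\;\forall\ell\in S}=\prod_{\ell\in S}\me^{-D_{u_\ell}D_{v_\ell}/L_n}\bigl(1+\op(n^{-(\tau-2)/(\tau-1)})\bigr),
\end{equation*}
the error absorbing the $|S|$-fold accumulation since $m$ is constant. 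Using $\sum_{S}(-1)^{|S|}\prod_{\ell\in S}x_\ell=\prod_\ell(1-x_\ell)$ with $x_\ell=\me^{-D_{u_\ell}D_{v_\ell}/L_n}$ then collapses the inclusion--exclusion into
\begin{equation*}
\Probn{H\text{ present on }(i_1,\dots,i_k)}=\prod_{\{u,v\}\in E_H}\bigl(1-\me^{-D_uD_v/L_n}\bigr)\bigl(1+\op(1)\bigr).
\end{equation*}
Because each factor $1-\me^{-D_uD_v/L_n}$ lies in a compact subinterval of $(0,1)$ determined only by $\varepsilon$ and $\mu$, the leading product is bounded below by a positive constant uniformly over $W_n^k(\varepsilon)$, so the $2^m$ additive inclusion--exclusion errors merge into a single multiplicative $1+\op(1)$ factor. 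Summing over tuples and pulling this uniform factor outside the sum produces~\eqref{eq:condex}.

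\textbf{Main obstacle.} The delicate point is uniformity of the $\op$ bound in Lemma~\ref{lem:condprob} across all $k$-tuples in $W_n^k(\varepsilon)$. This requires revisiting the sequential-exposure argument underlying that lemma and checking that the random corrections $\tilde D/D-1$ and $\tilde L_n/L_n-1$ can be controlled by a single deterministic sequence $\delta_n\to 0$ whenever all relevant degrees are confined to $[\varepsilon,1/\varepsilon]\sqrt{\mu n}$---which is available because such degrees stay in a compact order-$\sqrt{n}$ window. Once this uniformity is secured, the uniform lower bound on $\prod_{\{u,v\}\in E_H}(1-\me^{-D_uD_v/L_n})$ converts the $2^m$ additive errors into the single multiplicative $1+\op(1)$ stated in the lemma, and the rest is algebra.
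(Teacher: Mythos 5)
Your proposal is correct and follows essentially the same route as the paper: the paper's proof invokes its display~\eqref{eq:phsub2}, which is exactly your inclusion--exclusion over subsets of absent edges combined with Lemma~\ref{lem:condprob} for the joint absence probabilities, and then uses the uniform positivity of each factor $1-\me^{-D_iD_j/L_n}$ on $W_n^k(\varepsilon)$ to turn the additive errors into a single multiplicative $1+\op(1)$. The uniformity issue you flag is real but resolves exactly as you indicate, since all degrees lie in $[\varepsilon,1/\varepsilon]\sqrt{\mu n}$ and $L_n=\mu n(1+o(1))$ on $J_n$.
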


\begin{proof}
Let $\boldsymbol{v}=(v_1,\ldots,v_k)$ and $\ECMnD|_{\boldsymbol{v}}$ again be the induced subgraph of $\ECMnD$ on $\boldsymbol{v}$. We first derive a more detailed expression for the probability that a subgraph is present on $\boldsymbol{v}$ than~\eqref{eq:phsub2} which holds when $\boldsymbol{v}\in W_n^k(\varepsilon)$. Because $\boldsymbol{v}\in W_n^k(\varepsilon)$, we may use~\eqref{eq:psubdetail} for all edge probabilities to obtain
	\begin{equation}
		\prob_n(X_{v_{i_1},v_{j_1}}=\cdots = X_{v_{i_m},v_{j_m}}=0) = \prod_{l=1}^m\prob_n(X_{v_{i_l},v_{j_l}}=0)(1+O(n^{(\tau-2)/(\tau-1)})).
	\end{equation}
When $D_{v_i},D_{v_j}\in[\varepsilon\sqrt{n},\sqrt{n}/\varepsilon]$, $\Probn{X_{v_i,v_j}=0}=\Theta(1)$ and $\Probn{X_{v_i,v_j}=1}=\Theta(1)$.
Therefore, similarly to~\eqref{eq:phsub2}, for $\boldsymbol{v}\in W_n^k(\varepsilon)$,
	\begin{equation}\label{eq:psubsqrt}
	\begin{aligned}[b]
	&\Probn{\ECMnD|_{\boldsymbol{v}}\supseteq \Ecal_H}\\
	& = 1-\sum_{l=1}^m\prob_n(X_{v_{i_l},v_{j_l}}=0)+\sum_{l\neq p}\prob_n(X_{v_{i_l},v_{j_l}}=X_{v_{i_p},v_{j_p}}=0) \\
	&\quad -\sum_{\mathclap{l\neq p\neq r}}\prob_n(X_{v_{i_l},v_{j_l}}=X_{v_{j_p},v_{i_p}}=X_{v_{i_r},v_{j_r}}=0)+\cdots \\
	&\quad + (-1)^{m}\prob_n(X_{v_{i_1},v_{j_1}}=\cdots = X_{v_{i_m},v_{j_m}}=0)\\
	& =(1+o(1))\prod_{l=1}^{m}\left(1-\Probn{X_{v_{i_l},v_{j_l}}=0}\right).
	\end{aligned}
	\end{equation}
Thus, the conditonal expected value satisfies
	\begin{align}
	\Expn{N^\gsub(H,W_n^k(\varepsilon))}& = \sum_{\boldsymbol{v}\in W_n^k(\varepsilon)}\Probn{\ECMnD|_{\boldsymbol{v}}\supseteq \Ecal_H}\nonumber\\
	&=(1+o(1))\sum_{\boldsymbol{v}\in W_n^k(\varepsilon)}\prod_{l=1}^{m}\left(1-\Probn{X_{v_{i_l},v_{i_l}}=0}\right),
	\end{align}
Because $D_{v_i}D_{v_j}=O(n)$ and $L_n=\mu n(1+o(1))$ under $J_n$, by~\eqref{eq:pijsmall},
	\begin{equation}\label{eq:pij1}
	\Probn{X_{v_i,v_j}=1}=1-\me^{-D_{v_i}D_{v_j}/L_n}+\bigO{\frac{D_{v_i}^2D_{v_j}}{L_n^2}}= (1+o(1))\left(1-\me^{-D_{v_i}D_{v_j}/L_n}\right).
	\end{equation}
This results in
	\begin{equation}
	\Expn{N^\gsub(H,W_n^k(\varepsilon))}=(1+o(1))\sum_{\boldsymbol{v}\in W_n^k(\varepsilon)}\prod_{\{i,j\}\in \Ecal_H}(1-\me^{-D_{v_i}D_{v_j}/L_n}).
	\end{equation}
\end{proof}

\subsection{Convergence of conditional expectation}
We now study the asymptotic behavior of the expected number of subgraphs using Lemma~\ref{lem:condexsub}:

\begin{lemma}[Convergence of conditional expectation of $\sqrt{n}$ subgraphs]\label{lem:convsub}
	Let $H$ be a subgraph such that~\eqref{eq:maxeqsub} has a unique maximizer, and the maximum is attained at 0. Then,\cs{
	\begin{align}
	\frac{\Expn{N^\gsub(H,W_n^k(\varepsilon))}}{n^{\frac{k}{2}(3-\tau)}}=&  (1+\op(1))c^k\mu^{-\frac{k}{2}(\tau-1)}\int_{\varepsilon}^{1/\varepsilon}\!\!\cdots \int_{\varepsilon}^{1/\varepsilon}(x_1\cdots x_k)^{-\tau}\nonumber\\
	&  \times \prod_{\mathclap{\{i,j\}\in \Ecal_{H}}}(1-\me^{-x_ix_j})\dd x_1\cdots \dd x_k +f_n(\varepsilon),
	\end{align}
	for some function $f_n(\varepsilon)$ such that, for any $\delta>0$, 
	\begin{equation}\lim_{\varepsilon\searrow 0}\limsup_{n\to\infty}\Prob{f_n(\varepsilon)>\delta\mid J_n}=0.\end{equation}}
\end{lemma}

\begin{proof} Let $\abs{\Ecal_H}=m$ and denote the edges of $H$ by $\{i_1,j_1\},\ldots,\{i_m,j_m\}$. Define
	\begin{equation}\label{eq:g}
	g(t_1,\ldots,t_k):=\prod_{\{i,j\}\in \Ecal_H}(1-\me^{-t_ut_v}).
	\end{equation}
Using the Taylor expansion of $1-\me^{-xy}$ on $[\varepsilon,1/\varepsilon]^2$ results in
	\begin{equation}
	1-\me^{-xy}= \sum_{i=1}^s\frac{(xy)^i}{i!}(-1)^i+\bigO{\frac{\varepsilon^{-s}}{(s+1)!}}.
	\end{equation}
Since $g$ is bounded on $F=[\varepsilon,1/\varepsilon]^{k}$, we can find $s_1,\ldots,s_{m}$ and $\eta(t_1,\dots,t_k)$ such that $|\eta(t_1,\dots,t_k)|\leq \varepsilon^{k(\tau-1)+1}$ such that 
	\begin{align}\label{eq:taylorg}
	g(t_1,\ldots,t_k)&= \sum_{p_1=1}^{s_1}\cdots \sum_{p_{m}=1}^{s_{m}}\bigg((-1)^{p_1}\frac{t_{u_1}^{p_1}t_{v_1}^{p_1}}{p_1!}
	\cdots (-1)^{p_{m}}\frac{t_{u_{m}}^{p_{m}}t_{v_{m}}^{p_{m}}}{p_{m}!}\bigg)+\eta(t_1,\dots,t_k) \nonumber\\
	&=\sum_{p_1=1}^{s_1}\cdots \sum_{p_{m}=1}^{s_{m}}\left(\frac{(-1)^{p_1+\cdots +p_m}}{p_1!\cdots p_m!}t_{1}^{\gamma_1}t_{2}^{\gamma_2}\cdots t_{k}^{\gamma_k}\right)+\eta(t_1,\dots,t_k),
	\end{align}
where
	\begin{equation}
	\gamma_j:=\gamma_j(p_1,\ldots,p_m)=\sum_{l}p_l\ind{i_l=j \text{ or }j_l=j}.
	\end{equation}
Let $\Mn$ denote the random measure
	\begin{equation}
	\Mn([a,b])=(\mu n)^{\frac 12(\tau-1)}n^{-1}\sum_{\cj{v\in[n]}\ind{D_v\in \sqrt{\mu n}[a,b]}}.
	\end{equation}
The number of vertices with degrees in a certain interval $[a,b]$ is binomially distributed. By~\eqref{D-tail}, we thus get $(\mu n)^{\frac 12 (\tau-1)}\Prob{D_1\in \sqrt{n}[a,b]}\plim\lambda([a,b])$, where
	\begin{equation}
	\begin{aligned}[b]
	\lambda([a,b]):=c \int_{a}^{b}x^{-\tau}\dd x.
	\end{aligned}
	\end{equation}
Hence, by the weak law of large numbers, as $n\to\infty$,
	\begin{equation}\label{eq:Mn}
	\begin{aligned}[b]
	\Mn([a,b])&
	\plim\lambda([a,b]).
	\end{aligned}
	\end{equation}
Let $\Nn$ denote the product measure $\Mn\times \Mn\times\cdots \times \Mn$ ($k$ times). Then~\eqref{eq:taylorg} together with Lemma~\ref{lem:condexsub} yields
	\begin{equation}\label{eq:convexg}
	\begin{aligned}[b]
	& \frac{\Expn{N^\gsub(H,W_n^k(\varepsilon))}}{n^{\frac{k}{2}(3-\tau)}\mu^{\frac{k}{2}(1-\tau)}} 
	=\int_F g(t_1,\ldots,t_k)\dd \Nn(t_1,\ldots,t_k)\\
	& =\int_F \sum_{p_1=1}^{s_1}\cdots \sum_{p_{m}=1}^{s_{m}}\Bigg(\left(\frac{(-1)^{p_1+\cdots +p_m}}{p_1!\cdots p_m!}t_{1}^{\gamma_1}t_{2}^{\gamma_2}\cdots t_{k}^{\gamma_k}\right) 
	+\eta(t_1,\dots,t_k) \Bigg)\dd \Nn(t_1,\ldots,t_k)\\
	& = \sum_{p_1=1}^{s_1}\cdots \sum_{p_{m}=1}^{s_{m}}\frac{(-1)^{p_1+\cdots +p_m}}{p_1!\cdots p_m!}\int_{\varepsilon}^{1/\varepsilon}t_{1}^{\gamma_1}\dd \Mn(t_1) 
	\cdots \int_{\varepsilon}^{1/\varepsilon}t_{k}^{\gamma_k}\dd \Mn(t_k)+f_n(\varepsilon).
	\end{aligned}
	\end{equation}
Here 
	\begin{align}
	f_n(\varepsilon) & = \int_F \sum_{p_1=1}^{s_1}\cdots \sum_{p_{m}=1}^{s_{m}}\eta(t_1,\dots,t_k) \dd \Nn(t_1,\ldots,t_k)\nonumber\\
	& \leq \int_F \sum_{p_1=1}^{s_1}\cdots \sum_{p_{m}=1}^{s_{m}}\varepsilon^{k(\tau-1)+1} \dd \Nn(t_1,\ldots,t_k)\nonumber\\
	& = \varepsilon^{k(\tau-1)+1}s_1\cdots s_m \Mn([\varepsilon,1/\varepsilon])^k \nonumber\\
	& = \varepsilon^{k(\tau-1)+1} \bigOp{\lambda([\varepsilon,1/\varepsilon])^k }\nonumber\\
	&=  \varepsilon^{k(\tau-1)+1}(\varepsilon^{1-\tau}-\varepsilon^{\tau-1})^k\bigOp{1}=\bigOp{\varepsilon},
	\end{align}
which shows that, for any $\delta>0$, 
	\begin{equation}
	\lim_{\varepsilon\searrow 0}\limsup_{n\to\infty}\Prob{f_n(\varepsilon)>\delta\mid J_n}=0.
	\end{equation}
	
As in~\cite[Eq. (55)]{stegehuis2017b}, for any $\gamma$,
	\begin{equation}
	\int_{\varepsilon}^{1/\varepsilon}x^{\gamma}\dd \Mn(x)\plim  	\int_{\varepsilon}^{1/\varepsilon}x^{\gamma}\dd \lambda(x).
	\end{equation}
Combining this with~\eqref{eq:convexg} results in
	\begin{equation}
	\begin{aligned}[b]
	\allowdisplaybreaks
	&\frac{\Expn{N^\gsub(H,W_n^k(\varepsilon))}}{n^{\frac{k}{2}(3-\tau)}\mu^{\frac{k}{2}(1-\tau)}}\\
	& =
	(1+\op(1))\sum_{p_1=1}^{s_1}\cdots \sum_{p_{m}=1}^{s_{m}}\frac{(-1)^{p_1+\cdots +p_m}}{p_1!\cdots p_m!}
	\int_{\varepsilon}^{1/\varepsilon}t_{1}^{\alpha_1}\dd \lambda(t_1) \cdots \int_{\varepsilon}^{1/\varepsilon}t_{k}^{\alpha_k}\dd \lambda(t_k)+f_n(\varepsilon)\\
	& =(1+\op(1))\int_F \sum_{p_1=1}^{s_1}\cdots \sum_{p_{m}=1}^{s_{m}}\frac{(-1)^{p_1+\cdots +p_m}}{p_1!\cdots p_m!}t_{1}^{\alpha_1} \cdots t_{k}^{\alpha_k}
	\dd \lambda(t_1)\cdots \dd \lambda(t_k)+f_n(\varepsilon)\\
	& =(1+\op(1))\int_F g(t_1,\ldots,t_k)\dd \lambda(t_1)\cdots \dd \lambda(t_k)+f_n(\varepsilon).
	\end{aligned}
	\end{equation}
Then, by~\eqref{eq:Mn},
	\begin{align}
	\frac{\Expn{N^\gsub(H,W_n^k(\varepsilon))}}{n^{\frac{k}{2}(3-\tau)}}& =(1+\op(1)) c^k \mu^{-\frac{k}{2}(\tau-1)}\int_{\varepsilon}^{1/\varepsilon}
	\cdots \int_{\varepsilon}^{1/\varepsilon}(t_1\cdots t_k)^{-\tau }  \nonumber\\
	& \quad \times g(t_1,\ldots,t_k)\dd t_1\cdots \dd t_k +f_n(\varepsilon), 
	\end{align}
which proves the claim.
\end{proof}

\subsection{Conditional variance}
We now study the conditional variance of the number of subgraphs in the quenched setting for the degrees. The following lemma shows that the conditional variance of the number of subgraphs is small compared to its expectation:
\begin{lemma}[Conditional variance for subgraphs]\label{lem:varsub}
	Let $H$ be a subgraph such that~\eqref{eq:maxeqsub} has a unique maximum attained at 0. Then, on the event $J_n$ defined in~\eqref{eq:Jn},
	\begin{equation}
	\frac{\Varn{N^\gsub(H,W_n^k(\varepsilon))}}{\Expn{N^\gsub(H,W_n^k(\varepsilon))}^2}\plim 0.
	\end{equation}
\end{lemma}
\begin{proof}
	By Lemma~\ref{lem:convsub}, 
	\begin{equation}
	\Expn{N^\gsub(H,W_n^k(\varepsilon))}^2=\Theta_{\sss{\prob}}(n^{(3-\tau)k}),
	\end{equation}
	Thus, we need to prove that the variance is small compared to $n^{(3-\tau)k}$. Denote $\boldsymbol{v}=(v_1,\ldots,v_k)$ and ${\boldsymbol{u}}=(u_1,\ldots,u_k)$ and, for ease of notation, we denote $G=\ECMnD$. 
	We write the variance as
	\begin{align}\label{eq:varmotif}
	\Varn{N^\gsub(H,W_n^k(\varepsilon))}&= \sum_{\boldsymbol{v}\in W_n^k(\varepsilon)}\sum_{\boldsymbol{u}\in W_n^k(\varepsilon)}
	\Big(\Probn{G|_{\boldsymbol{v}}\supseteq \Ecal_H,G|_{\boldsymbol{u}}\supseteq \Ecal_H}\nonumber\\
	& \quad -\Probn{G|_{\boldsymbol{v}}\supseteq \Ecal_H}\Probn{G|_{\boldsymbol{u}}\supseteq \Ecal_H}\Big).
	\end{align}
	This splits into various cases, depending on the overlap of $\boldsymbol{v}$ and $\boldsymbol{u}$. When $\boldsymbol{v}$ and $\boldsymbol{u}$ do not overlap, similarly to~\eqref{eq:psubsqrt},
	\begin{equation}\label{eq:varbound}
	\begin{aligned}[b]
	&\sum_{\boldsymbol{v}\in W_n^k(\varepsilon)}\sum_{\boldsymbol{u}\in W_n^k(\varepsilon)}\big(\Probn{G|_{\boldsymbol{v}}\supseteq \Ecal_H,G|_{\boldsymbol{u}}\supseteq \Ecal_H} -\Probn{G|_{\boldsymbol{v}}\supseteq \Ecal_H}\Probn{G|_{\boldsymbol{u}}\supseteq \Ecal_H}\big)\\
	& = \sum_{\boldsymbol{v}\in W_n^k(\varepsilon)} \sum_{\boldsymbol{u}\in W_n^k(\varepsilon)}\!\!\Big((1+o(1))\prod_{l=1}^{m}\big(1-\prob_n\big(X_{v_{i_l},v_{j_l}}=0\big)\big) \big(1-\prob_n\big(X_{u_{i_l},u_{j_l}}=0\big)\big)\\
	& \quad -(1+o(1))\prod_{l=1}^{m}\big(1-\prob_n\big(X_{v_{i_l},v_{j_l}}=0\big)\big) \big(1-\prob_n\big(X_{u_{i_l},u_{j_l}}=0\big)\big)\Big)\\
	& = \Expn{N^\gsub(H,W_n^k(\varepsilon))}^2o(1).
	\end{aligned}
	\end{equation}
The other contributions are when $\boldsymbol{v}$ and $\boldsymbol{u}$ overlap. In this situation, we use the bound $\Probn{X_{u,v}=1}\leq1$.
When $\boldsymbol{v}$ and $\boldsymbol{u}$ overlap on $s\geq 1$ vertices, we bound the contribution to~\eqref{eq:varmotif} as
	\begin{equation}\label{eq:varsqrt}
	\begin{aligned}[b]
	\sum_{\mathclap{\boldsymbol{v},\boldsymbol{u}\in W_n^k(\varepsilon)\colon \abs{\boldsymbol{v}\cup\boldsymbol{u}}=2k-s}} \ \ \Probn{G|_{\boldsymbol{v}}\supseteq \Ecal_H,G|_{\boldsymbol{u}}\supseteq \Ecal_H}& \leq\abs{ \{i\colon D_i\in \sqrt{\mu n}[\varepsilon,1/\varepsilon]\}}^{2k-s}\\
	& =\bigOp{n^\frac{(3-\tau)(2k-s)}{2}},
	\end{aligned}
	\end{equation}
	which is $o(n^{(3-\tau)k})$, as required. 
\end{proof}

\begin{proof}[Proof of Lemma~\ref{lem:convNH}]
	We start by proving part (i). By Lemma~\ref{lem:varsub} and Chebyshev's inequality, conditionally on the degrees
	\begin{equation}
		N^\gsub(H,W_n^k(\varepsilon))=\Expn{N^\gsub(H,W_n^k(\varepsilon)}(1+\op(1)).
	\end{equation}
	Combining this with Lemma~\ref{lem:convsub} proves Lemma~\ref{lem:convNH}(i). 
	Lemma~\ref{lem:convNH}(ii) is a direct consequence of Lemma~\ref{lem:S3int} in the next section, when we take $|S_3^*|=k$. Here, we remark that the proof of Lemma~\ref{lem:S3int} is entirely self-contained, so no circularity in our argument arises.
\end{proof}

\section{Major contribution to general subgraphs: proof of Theorem~\ref{thm:motifs}(i)}
\label{sec:proofsec2}
In this section we prove Theorem~\ref{thm:motifs}(i) for subgraphs. We start by giving an overview of the proof. We restrict this overview to the expected value of $N^\gsub\big(H,M_n^{(\boldsymbol{\alpha}^\gsub)}\left(\varepsilon_n\right)\big)$.

To compute $\expec\big[N^\gsub\big(H,M_n^{(\boldsymbol{\alpha}^\gsub)}\left(\varepsilon_n\right)\big)\big],$ we need to count the expected number of copies of $H$ on vertices $v_1, \ldots, v_k\in[n]$ for which $D_{v_i}\approx n^{\alpha_i^\gsub}$. This means that we sum the probabilities that $D_{v_i}=k_i$ for all $i\in [k]$ over all $k_i$ that are of the order $n^{\alpha_i^\gsub}$, multiplied by the probabilities that $\{v_i,v_j\}$ is an edge in $\CMnD$ for all $\{i,j\}\in \Ecal_H$, conditionally on the degrees. We rescale the arising sum over $k_i$, and instead integrate over $x_{i}=k_i n^{-\alpha_i^\gsub}$.
\smallskip

After rescaling, we are left with a $k$-fold integral over the variables $x_i$ for all $i\in [k]$ of which we aim to show that it satisfies the appropriate bounds. Here, we will crucially rely on the uniqueness of the optimization problem in \eqref{eq:maxeqsub}. While we are `merely' left with a $k$-fold integral over relatively simple functions, due to the somewhat implicit information that \eqref{eq:maxeqsub} has a unique solution, proving the finiteness of the integral is quite challenging. Indeed, we will need to resort to comparisons over different partitions, and \cj{use} that they provide a smaller value of the functional in \eqref{eq:maxeqsub} to establish the finiteness of the integral.

\smallskip
Let us give some more details about the nature of the integral, and on the organisation of the proof. To simplify notation, we write $S_i^*=S^\gsub_i$ for $i=1,2,3$ for the optimal partition $\cal{P}$ in \eqref{eq:maxeqsub}. Recall that, by Lemma \ref{lem:condexsub} and on the event $J_n$, 
	\eqn{
	\label{edge-prob-ECM}
	\prob_n(\text{$\{v_i,v_j\}$ is an edge in $\CMnD$})\leq \min(D_{v_i}D_{v_j}/(\mu n), 1),
	}
and these events are close to being independent for different edges. We bound the minimum in~\eqref{edge-prob-ECM} by $D_{v_i}D_{v_j}/(\mu n)$ for $i,j\in S_1^*$, for $i$ or $j$ in $V_1$ and for $i\in S_1^*$, $j\in S_3^*$ or vice versa. We bound the minimum by 1 for $i,j\in S_2^*$ and $i\in S_2^*$, $j\in S_3^*$ or vice versa. This means that in the integral over the rescaled variables $x_{i}$, a factor $x_i^{\zeta_i}$ appears for a certain $\zeta_i\geq 0$ (for a precise definition of $\zeta_i$, see \eqref{eq:zeta}). Further, the nice aspect of this bound is that the integrals over $x_i$ for $i\in [k]$ {\em factorize} into integrals over $x_i$ for $i\in S_3^*$ and $i\in S_1^*\cup S_2^*$. This allows us to study these integrals {\em separately}. It turns out that our proof of the finiteness of these integrals depends sensitively on the optimization problem in \eqref{eq:maxeqsub} \cj{having a} {\em unique} \cj{solution}. In turn, this explains why some of these integrals are quite hard to bound, as the only ingredient we have is that the optimization problem in \eqref{eq:maxeqsub} \cj{has a} unique \cj{solution}.
\smallskip

The remainder of the proof is now organised as follows. In Lemma \ref{lem:dmotif}, we derive bounds on the additional powers $\zeta_i$ of $x_i$ in the rescaled integral, which will prove crucial in bounding the arising integrals. In Lemma \ref{lem:S3int}, we derive a bound on the integrals over $x_i$ for $i\in S_3^*$, and in Lemma \ref{lem:S1S2int}, we bound the integrals over $x_i$ for $i\in S_1^*\cup S_2^*$. After stating Lemmas \ref{lem:S3int} and \ref{lem:S1S2int}, we complete the proof of Theorem~\ref{thm:motifs}(i). Then we give the (rather involved) proofs of Lemmas \ref{lem:S3int} and \ref{lem:S1S2int}.
\smallskip

Before giving the details of the argument, we introduce some further notation.
For any $W\subseteq V_H$, we denote by $d_{i,W}$ the number of edges from vertex $i$ to vertices in $W$.
Let $H$ be a connected subgraph, such that the optimum of~\eqref{eq:maxeqsub} is unique, and let ${\mathcal{P}}=(S_1^*,S_2^*,S_3^*)$ be the optimal partition. Define
	\begin{equation}\label{eq:zeta}
	\zeta_i=
	\begin{cases}
	1 & \text{if }d_i=1,\\
	d_{i,S_1^*}+d_{i,S_3^*}+d_{i,V_1} & \text{if }i\in S_1^*,\\
	d_{i,V_1} & \text{if }i\in S_2^*,\\
	d_{i,S_1^*}+d_{i,V_1} & \text{if }i\in S_3^*.
	\end{cases}
	\end{equation}
The following lemma states several properties of the number of edges between vertices in the different optimizing sets:
\begin{lemma}[Bounds on the additional powers of rescaled variables]
\label{lem:dmotif}
	Let $H$ be a connected subgraph, such that the optimum of~\eqref{eq:maxeqsub} is unique, and let ${{\mathcal{P}}=(S_1^*,S_2^*,S_3^*)}$ be the optimal partition. Then
	\begin{enumerate}[label={\upshape(\roman*)}]
		\item $\zeta_i\leq 1$ for $i\in S_1^*$;
		\item $d_{i,S_1^*}+\zeta_i\geq 2$ for $i\in S_2^*$;
		\item $\zeta_i\leq 1$ and $d_{i,S_3^*}+\zeta_i\geq 2$ for $i\in S_3^*$.
	\end{enumerate}
\end{lemma}

\begin{proof} Suppose first that $i\in S_1^*$. Now consider the partition $\hat{S}_1=S_1^*\setminus \{i\}$, $\hat{S}_2=S_2^*$, $S_3=S_3^*\cup \{i\}$. Then, $E_{\hat{S}_1}=E_{S_1^*}-d_{i,S_1^*}$  and $E_{\hat{S}_1,\hat{S}_3}=E_{S_1^*,S_3^*}+d_{i,S_1^*}-d_{i,S_3^*}$. Furthermore, $E_{\hat{S}_1,V_1}=E_{S^*_1,V_1}-d_{i,V_1}$ and $E_{\hat{S}_2,V_1}=E_{S_2^*,V_1}$.  Because the partition into $S_1^*,S_2^*$ and $S_3^*$ achieves the unique optimum of~\eqref{eq:maxeqsub},
	\begin{equation}
	\begin{aligned}[b]
	&|S_1^*|-|S_2^*|-\frac{2E_{S_1^*}-E_{S_1^*,S_3^*}+E_{S_2^*,V_1}-E_{S_1^*,V_1}}{\tau-1}\\
	&>|S_1^*|-1-|S_2^*|-\frac{2E_{S_1^*}-E_{S_1^*,S_3^*}-d_{i,S_1^*}-d_{i,S_3^*}+E_{S_2^*,V_1}-E_{S_1^*,V_1}+d_{i,V_1}}{\tau-1},
	\end{aligned}
	\end{equation}
which reduces to
	\begin{equation}
	d_{i,S_1^*}+d_{i,S_3^*}+d_{i,V_1}=\zeta_i<\tau-1.
	\end{equation}
Using that $\tau\in(2,3)$ then yields $d_{i,S_1^*}+d_{i,S_3^*}+d_{i,V_1}\leq 1$. 
	
Similar arguments give the other inequalities. For example, for $i\in S_3^*$, considering the partition where $i$ is moved to $S_1^*$ gives the inequality $d_{i,S_3^*}+d_{i,S_1^*}+d_{i,V_1}\geq 2$, and considering the partition where $i$ is moved to $S_2^*$ results in the inequality $d_{i,S_1^*}+d_{i,V_1}\leq 1$, so that $\zeta_i\leq 1$.
\end{proof}

We now show that two integrals related to the solution of the optimization problem~\eqref{eq:maxeqsub} are finite, using Lemma~\ref{lem:dmotif}. These integrals are the key ingredient in proving Theorem~\ref{thm:motifs}(i) for subgraphs.
\begin{lemma}[{Subgraph integrals over $S_3^*$}]
\label{lem:S3int}
	Suppose that the maximum in~\eqref{eq:maxeqsub} is uniquely attained by ${\mathcal{P}}=(S_1^*,S_2^*,S_3^*)$ with $|S_3^*|=s>0$, and say $S_3^*=[s]$. Then
	\begin{equation}\label{eq:S3int}
	\int_{0}^{\infty}\cdots \int_{0}^\infty \prod_{i \in [s]}x_i^{-\tau+\zeta_i}\prod_{\{i,j\}\in \Ecal_{S_3^*}}\min(x_ix_j,1)\dd x_s\cdots\dd x_1<\infty.
	\end{equation} 
\end{lemma}
\smallskip

The proof of Lemma \ref{lem:S3int} is deferred to after the proof of Theorem~\ref{thm:motifs}(i). We continue with the integrals over $S_1^*\cup S_2^*$:

\begin{lemma}[{Subgraph integrals over $S_1^*\cup S_2^*$}]
\label{lem:S1S2int}
	Suppose the optimal solution to~\eqref{eq:maxeqsub} is unique, and attained by ${\mathcal{P}}=(S_1^*,S_2^*,S_3^*)$. Say that $S_2^*=[t_2]$ and $S_1^*=[t_2+t_1]\setminus [t_2]$. Then, 
	for every $a>0$,
	\begin{equation}
	\label{eq:S1S2int}
	\int_{0}^{a}\cdots \int_0^a\int_0^\infty\cdots\int_0^\infty \prod_{j\in[t_1+t_2]}x_j^{-\tau+\zeta_j} \ \prod_{\mathclap{\{i,j\}\in \Ecal_{S_1^*,S_2^*}}}\min(x_ix_j,1)\dd x_{t_1+t_2}\cdots \dd x_1<\infty .
	\end{equation}
\end{lemma}
\smallskip

The proof of Lemma \ref{lem:S1S2int} is deferred to after the proof of Theorem~\ref{thm:motifs}(i). Now we are ready to complete the proof of Theorem~\textup{\ref{thm:motifs}(i)} for subgraphs:

\begin{proof}[Proof of Theorem~\textup{\ref{thm:motifs}(i)}]
Because $D_{\max}=\bigOps(n^{1/(\tau-1)})$, for any $\eta_n\to 0$,  $D_{\max}\leq n^{1/(\tau-1)}/\eta_n$ with high probability. Define 
	\begin{equation}
	\gamma_i^u(n)=\begin{cases}
	n^{1/(\tau-1)}\cs{/\eta_n}& \text{if }i\in S_2^*,\\
	n^{\alpha_i^\gsub}/\varepsilon_n & \text{else,}
	\end{cases}
	\end{equation}
with $\alpha_i^\gsub$ as in~\eqref{eq:alphasub}, and denote
	\begin{equation}
	\gamma_i^l(n)=\begin{cases}
	1& \text{if }i\in V_1,\\
	\varepsilon_n n^{\alpha_i^\gsub}& \text{else.}
	\end{cases}
	\end{equation}
We then show that the expected number of subgraphs where the degree of at least one vertex $i$ satisfies $D_i\notin[\gamma_i^l(n),\gamma_i^u(n)]$ is small, similarly to the proof of Theorem~\ref{thm:sqrtsub} in Section \ref{sec:proof2}.

With loss of generality, we assume that vertex $1\in V_H$ satisfies $1\in V_1$. We count the expected number of $\boldsymbol{v}=(v_1, \ldots, v_k)$ for which the edge $\{v_i,v_j\}$ is present in $\CMnD$ for every $\{i,j\}\in \Ecal_H$. We first study the expected number of copies of $H$ where vertex $V_1$ has degree $D_{v_1}\in[1,\gamma_1^l(n))$ and all other vertices satisfy $D_{v_i}\in[\gamma_i^l(n),\gamma_i^u(n)]$, by integrating the probability that subgraph $H$ is formed over the range where vertex $v_1$ has degree $D_{v_1}\in[1,\gamma_1^l(n))$ and all other vertices satisfy $D_{v_i}\in[\gamma_i^l(n),\gamma_i^u(n)]$. Using that the connection probabilities can be bounded by ${M_1\min(D_{v_i}D_{v_j}/n,1)}$ for some $M_1>0$ (recall Lemma \ref{lem:condexsub}, and in particular \eqref{edge-prob-ECM}), and the degree distribution can be bounded as $\Prob{D=k}\leq M_2k^{-\tau}$ for some $M_2>0$ by \eqref{D-tail}, we bound the expected number of such copies of $H$ by
	\begin{equation}
	\label{eq:Exp1small}
	\begin{aligned}[b]
	&\sum_{\boldsymbol{v}}\Exp{I^\gsub(H, \boldsymbol{v})\ind{D_{v_1}<\gamma^l_1(n),D_{v_i}\in [\gamma_i^l(n),\gamma_i^u(n)] \ \forall i>1}}\\
	& \leq Kn^k\int_{1}^{\gamma_1^l(n)}\int_{\gamma_2^l(n)}^{\gamma_2^u(n)}\cdots \int_{\gamma_k^l(n)}^{\gamma_k^u(n)}(x_1\cdots x_k)^{-\tau}
	\prod_{\mathclap{\{i,j\}\in \Ecal_H}}\min\left(\frac{x_ix_j}{n},1\right)\dd x_k\cdots\dd x_1,
	\end{aligned}
	\end{equation}
for some $K>0$, and where we recall that $I^\gsub(H, \boldsymbol{v})=\ind{\ECMnD|_{\boldsymbol{v}}\supseteq \Ecal_H}$.  This integral equals zero when vertex 1 is in $V_1$, since then $[1,\gamma_1^l(n))=\varnothing$. 
Suppose {that} vertex 1 is in $S_2^*$. W.l.o.g.\ assume that $S_2^*={[t_2]}$, $S_1^*={[t_1+t_2]\setminus [t_2]}$ and $S_3^*={[t_1+t_2+t_3]\setminus [t_1+t_2]}$. 
We bound the minimum in~\eqref{eq:Exp1small} by 
	\begin{itemize}
	\item[(a)] $x_ix_j/n$ for $i,j\in S_1^*$;
	\item[(b)] $x_ix_j/n$ for $i$ or $j$ in $V_1$; 
	\item[(c)] $x_ix_j/n$ for $i\in S_1^*$, $j\in S_3^*$ or vice versa; and
	 \item[(d)] 1 for $i,j\in S_2^*$ and $i\in S_2^*$, $j\in S_3^*$ or vice versa.
	 \end{itemize}

Applying the change of variables $y_i=x_i/n^{\alpha_i^\gsub}$ results, for some $\tilde{K}>0$,
in the bound
	\begin{equation}\label{eq:expnhsmall}
	\begin{aligned}[b]
	&{\sum_{\boldsymbol{v}}\Exp{I^\gsub(H, \boldsymbol{v})\ind{D_{v_1}<\gamma^l_1(n),D_{v_i}\in [\gamma_i^l(n),\gamma_i^u(n)] \ \forall i>1}}}\leq \tilde{K} n^{|S_1^*|(2-\tau)+|S_3^*|(1-\tau)/2-|S_2^*|}\\
	& \quad \times n^kn^{\frac{\tau-3}{\tau-1}E_{S_1^*}+\frac{\tau-3}{2(\tau-1)}E_{S_1^*,S_3^*}-\frac{1}{\tau-1}E_{S_1^*,V_1}-\frac{1}{2}E_{S_3^*,V_1}-\frac{\tau-2}{\tau-1}E_{S_2^*,V_1}}\nonumber\\
	& \quad \times  \int_{0}^{\varepsilon_n}\int_{0}^{\cs{1/\eta_n}}\cdots\int_{0}^{\cs{1/\eta_n}}\int_{0}^{\infty}\cdots \int_{0}^{\infty}\prod_{i\in V_H\setminus V_1}y_i^{-\tau+\zeta_i}\\
	& \quad  \times\prod_{\mathclap{\{i,j\}\in \Ecal_{S_3^*}\cup E_{S_1^*,S_2^*}}}\min(y_iy_j,1)\dd y_{t_1+t_2+t_3}\cdots \dd y_{1} \prod_{j \in V_1}\int_{1}^{\infty} y_j^{1-\tau}\dd y_j,
	\end{aligned}
	\end{equation}
where the integrals from 0 to $1/\eta_n$ correspond to vertices in $S_2^*$ and the integrals from 0 to $\infty$ to vertices in $S_1^*$ and $S_3^*$. Since $\tau\in(2,3)$, the integrals corresponding to vertices in $V_1$ are finite. By the analysis from~\eqref{eq:maxtemp} to~\eqref{eq:maxcontrscaling}, 
	\begin{align}
	&|S_1^*|(2-\tau)+|S_3^*|(1-\tau)/2-|S_2^*|+k+\frac{\tau-3}{\tau-1}E_{S_1^*}+\frac{\tau-3}{2(\tau-1)}E_{S_1^*,S_3^*}\\
	&\qquad\qquad -\frac{1}{\tau-1}E_{S_1^*,V_1}
	-\frac{1}{2}E_{S_3^*,V_1}-\frac{\tau-2}{\tau-1}E_{S_2^*,V_1}\nonumber\\
	&\qquad= \frac{3-\tau}{2}(k_{2+}+B^\gsub(H))+k_1/2.\nonumber
	\end{align}
The integrals over $y_i\in V_H\setminus V_1$ can be split into
	\begin{equation}
	\label{eq:ints2s3}
	\begin{aligned}[b]
	& \int_{0}^{\varepsilon_n}\int_{0}^{\cs{1/\eta_n}}\cdots\int_{0}^{\cs{1/\eta_n}}\int_{0}^{\infty}\cdots \int_{0}^{\infty} \ \prod_{\mathclap{i\in S_1^*\cup S_2^*}} \ y_i^{-\tau+\zeta_i}\prod_{\mathclap{\{i,j\}\in \Ecal_{S_1^*,S_2^*}}}\min(y_iy_j,1)\dd y_{t_1+t_2}\cdots \dd y_{1}\\
	& \quad \times\int_{0}^{\infty}\cdots \int_{0}^{\infty}\prod_{i\in S_3^*}y_i^{-\tau+\zeta_i}\prod_{\mathclap{\{i,j\}\in \Ecal_{S_3^*}}} \ \min(y_iy_j,1)\dd y_{t_1+t_2+t_3}\cdots \dd y_{t_1+t_2+1}.
	\end{aligned}
	\end{equation}
	By Lemma~\ref{lem:S3int} the set of integrals on the second line of~\eqref{eq:ints2s3} is finite. Lemma~\ref{lem:S1S2int} shows that the set of integrals on the first line of~\eqref{eq:ints2s3} tends to zero for $\eta_n$ fixed and $\varepsilon_n\to 0$. Thus, choosing $\eta_n\to 0$ sufficiently slowly compared to $\varepsilon_n$ yields
	\begin{align}
	 \int_{0}^{\varepsilon_n}\!&\int_{0}^{{1/\eta_n}}\!\!\cdots\int_{0}^{{1/\eta_n}}\!\!\int_{0}^{\infty}\!\!\cdots \!\int_{0}^{\infty} \ \prod_{\mathclap{i\in S_1^*\cup S_2^*}} \ y_i^{-\tau+\zeta_i}\prod_{\mathclap{\{i,j\}\in \Ecal_{S_1^*,S_2^*}}}\min(y_iy_j,1)\dd y_{t_1+t_2}\cdots \dd y_{1}\nonumber\\
	& = o(1).
	\end{align}
	Therefore,
	\eqan{
	&{\sum_{\boldsymbol{v}}\Exp{I^\gsub(H, \boldsymbol{v})\ind{D_{v_1}<\gamma^l_1(n),D_{v_i}\in [\gamma_i^l(n),\gamma_i^u(n)] \ \forall i>1}}}\nonumber\\
	&\qquad=o\left(n^{\frac{3-\tau}{2}(k_{2+}+B^\gsub(H))+k_1/2}\right),
	}
when vertex 1 satisfies $1\in S_2^*$. Similarly, we can show that the expected contribution from $D_{v_1}<\gamma_1^l(n)$ satisfies the same bound when vertex 1 is in $S_1^*$ or $S_3^*$. The expected number of subgraphs where $D_{v_1}>\gamma_1^u(n)$ if vertex 1 is in $S_1^*$, $S_3^*$ or $V_1$ can be bounded similarly, as well as the expected contribution where multiple vertices have $D_{v_i}\notin [\gamma_i^l(n),\gamma_i^u(n)]$. 
	
Denote
		\begin{equation}
		\Gamma_n(\varepsilon_n,\eta_n) = \{(v_1,\dots,v_k)\colon D_{v_i}\in[\gamma_{v_i}^l(n),\gamma_{v_i}^u(n)] \},
		\end{equation}	
and define $\bar{\Gamma}_n(\varepsilon_n,\eta_n)$ as its complement. Denote the number of subgraphs with vertices in $\bar{\Gamma}_n(\varepsilon_n,\eta_n)$ by $N^\gsub(H,\bar{\Gamma}_n(\varepsilon_n,\eta_n))$. Since $D_{\max}\leq n^{1/(\tau-1)}/\eta_n$ with high probability, $\Gamma_n(\varepsilon_n,\eta_n)={M}_n^{(\boldsymbol{\alpha}^\gsub)}$ with high probability. Therefore, with high probability,
		\begin{equation}
		N^\gsub\Big(H,\bar{M}_n^{(\boldsymbol{\alpha}^\gsub)}\left(\varepsilon_n\right)\Big) = N^\gsub\Big(H,\bar{\Gamma}_n(\varepsilon_n,\eta_n)\Big),
		\end{equation}
where $N^\gsub\Big(H,\bar{M}_n^{(\boldsymbol{\alpha}^\gsub))}\left(\varepsilon_n\right)\big)$ denotes the number of copies of $H$ on vertices not in $M_n^{(\boldsymbol{\alpha}^\gsub)}\left(\varepsilon_n\right)$. 
By the Markov inequality,
	\begin{equation}
	N^\gsub\Big(H,\bar{\Gamma}_n(\varepsilon_n,\eta_n)\Big)=\op\left(n^{\frac{3-\tau}{2}(k_{2+}+B^\gsub(H))+k_1/2}\right).
	\end{equation}

	Combining this with the fact that by Theorem~\ref{thm:motifs}(ii) as proved in Section \ref{sec:maxcont}, for fixed $\varepsilon>0$, 
	\begin{align} 
	N^\gsub(H)&= N^\gsub(H,M_n^{(\boldsymbol{\alpha}^\gsub)}(\varepsilon))+N^\gsub(H,\bar{M}_n^{(\boldsymbol{\alpha}^\gsub)}(\varepsilon))\nonumber\\
	& =\bigOps(n^{\frac{3-\tau}{2}(k_{2+}+B^\gsub(H))+k_1/2})
	\end{align}
	shows that
	\begin{equation}
	\frac{N^\gsub\Big(H,M_n^{(\boldsymbol{\alpha}^\gsub)}\left(\varepsilon_n\right)\big)}{N^\gsub(H)}\plim 1,
	\end{equation}
as required. This completes the proof of Theorem~\ref{thm:motifs}(i).
\end{proof}
\smallskip

We close this section by proving the integral Lemmas \ref{lem:S3int} and \ref{lem:S1S2int}:

\begin{proof}[Proof of Lemma \ref{lem:S3int}] Recall that $S_3^*=[s]$. W.l.o.g.\ we may assume that $x_1<x_2<\cdots <x_s$. Let $U=[t]$ be such that $x_i<1$ precisely when $i\in[t]$. Here $U=\varnothing$ when $t=0$.

The integral~\eqref{eq:S3int} consists of multiple regions that will be characterized by $U$, and we will deal with all of them in the sequel. The first region is where $U=\varnothing$, so that $x_1, \ldots, x_s\geq 1$. Since $-\tau+\zeta_i<-1$ by Lemma~\ref{lem:dmotif}(iii), this integral can be bounded by the full integral over $[1,\infty)$ for all variables, which is bounded by
	\begin{equation}
	\label{eq:intxlarge}
	\int_{1}^{\infty}\cdots \int_{1}^{\infty}\prod_{j\in[s]}x_j^{-\tau+\zeta_j}\dd x_1\cdots \dd x_s<\infty.
	\end{equation}
The second region is where $U=[s]$, so that $x_1,\ldots,x_s\in [0,1]$. Since by Lemma~\ref{lem:dmotif}, any vertex in $S_3^*$ satisfies $\zeta_i+d_{i,S_3^*}\geq 2$, this integral can be bounded as
	\begin{equation}\label{eq:intxsmall}
	\begin{aligned}[b]
	&\int_{0}^{1}\cdots \int_{0}^{1}\prod_{j\in[s]}x_j^{-\tau+\zeta_j}\prod_{\{i,j\}\in \Ecal_{S_3^*}}x_ix_j\dd x_1\cdots \dd x_s \\
	&{\qquad =\int_{0}^{1}\cdots \int_{0}^{1}\prod_{j\in[s]}x_j^{-\tau+\zeta_j+d_{j,S_3^*}}\dd x_1\cdots \dd x_s}\\
	& \qquad \leq \int_{0}^{1}\cdots \int_{0}^{1}(x_1\cdots x_s)^{2-\tau}\dd x_1\cdots \dd x_s<\infty.
	\end{aligned}
	\end{equation}
	
The other regions arise when $U\neq \varnothing$ and $U\neq [t]$. For these cases, the integral runs from 1 to $\infty$ for $i\in U$, and from 0 to $1$ for $i\in\bar{U}=S_3^*\setminus U$. In such a region, $\min(x_ix_j,1)=x_ix_j$ when $i,j\notin U$, and $\min(x_ix_j,1)=1$ when $i,j\in U$. Then, as we assumed that $x_1<x_2<\cdots <x_s$, the contribution to~\eqref{eq:S3int} from the region described by $U$ can be bounded by
	\begin{equation}
	\label{eq:intS}
	\int_{1}^{\infty}\int_{x_1}^\infty\cdots \int_{x_{t-1}}^{\infty}\prod_{j \in[t]}x_j^{-\tau+\zeta_j}\prod_{i=t+1}^s h(i,\boldsymbol{x})\dd x_t\cdots \dd x_1,
	\end{equation}
where $\boldsymbol{x}=(x_i)_{i\in[t]}$ and
	\begin{equation}\label{eq:h}
	h(i,\boldsymbol{x})= \int_0^1x_{i}^{-\tau+\zeta_i+d_{i,\bar{U}}}\prod_{\mathclap{j\in U\colon \{i,j\}\in \Ecal_{S_3^*}}} \ \min(x_ix_j,1)\dd x_i,
	\end{equation}
for $i\in {U=[s]\setminus [t]}$.

The integral in $h(i,\boldsymbol{x})$ consists of multiple regions, depending on whether $x_ix_j<1$ or not. Suppose vertex $i\in\bar{U}$ is connected in $H$ to vertices $j_1,j_2,\ldots,j_l\in U$, where $j_1<j_2<\cdots<j_l$ so that also $1<x_{j_1}<x_{j_2}<\cdots<x_{j_l}$ and $l+d_{i,\bar{U}}=d_{i,S_3^*}$. Then, by splitting the integral depending on how many $j_l'$s are such that $x_ix_{j_l}\leq 1$, we obtain
	\begin{align}
	\label{eq:intu}
	h(i,\boldsymbol{x}) &= \int_{0}^{1}x_i^{-\tau+\zeta_i+d_{i,\bar{U}}}\min(x_ix_{j_1},1)\min(x_ix_{j_2},1)\cdots\min(x_ix_{j_l},1)\dd x_i\nonumber\\
	&= \int_{1/x_{j_1}}^{1}x_i^{-\tau+\zeta_i+d_{i,\bar{U}}}\dd x_i+\cdots +x_{j_1}\cdots x_{j_{l-1}}\int_{1/x_{j_l}}^{1/x_{j_{l-1}}}\! x_i^{-\tau+\zeta_i+l-1+d_{i,\bar{U}}}\dd x_i
	\nonumber \\
	&\qquad +x_{j_1}\cdots x_{j_l}\int_{0}^{1/x_{j_l}}\! x_i^{-\tau+\zeta_i+l+d_{i,\bar{U}}}\dd x_i.
	\end{align}
Since $\zeta_i+d_{i,\bar{U}}+l-\tau=\zeta_i+d_{i,S_3^*}-\tau>-1$ by Lemma~\ref{lem:dmotif}(iii), the last integral is finite.

Computing these integrals yields
	\begin{align}
	h(i,\boldsymbol{x}) & {= C_{0}+C_1x_{j_1}^{\tau-\zeta_i-d_{i,\bar{U}}-1}+\cdots +C_{l-1}x_{j_1}\cdots x_{j_{l-2}}x_{j_{l-1}}^{\tau-\zeta_i-l-d_{i,\bar{U}}+1}} \nonumber\\
	& \quad {+C_{l} x_{j_1}x_{j_2}\cdots x_{j_{l-1}}x_{j_l}^{\tau-\zeta_i-l-d_{i,\bar{U}}}}\nonumber\\
	& {= :  C_0h_0(i,\boldsymbol{x}) +C_1 h_1(i,\boldsymbol{x}) +\dots+C_{l}h_{l}(i,\boldsymbol{x})},
	\end{align}	
for some constants $C_0,\dots,C_{l}$. These terms (except for the first term) are all products of powers of $x_{j_1},\dots,x_{j_l}$, such that the sum of these powers is $\tau-\zeta_i-d_{i,\bar{U}}-1$. Furthermore, the exponents of $x_{j_1},\dots,x_{j_b}$ equal 1 for some $b\in[l]$, and the exponents of $x_{j_{b+2}},\dots,x_{j_l}$ equal zero, and
	\eqn{
	\frac{h_{p+1}(i,\boldsymbol{x})}{h_{p}(i,\boldsymbol{x})}
	=x_{j_p}\frac{x_{j_{p+1}}^{\tau-\zeta_i-(p+1)}}{x_{j_{p}}^{\tau-\zeta_i-p+1}}
	=\Big(\frac{x_{j_{p+1}}}{x_{j_p}}\Big)^{\tau-\zeta_i-(p+1)},
	}
which is at most 1 for $p\leq \tau-\zeta_i$, and smaller than 1 for $p>\tau-\zeta_i$. Thus, $p^*=p^*_i={\rm argmax}_{p} h_p(i,\boldsymbol{x})=\lfloor \tau-\zeta_i\rfloor$.
Therefore, there exists a $K>0$ such that
	\begin{align}
	\label{eq:hstar}
	h(i,\boldsymbol{x})\leq K h_{p^*_i}(i,\boldsymbol{x}).
	\end{align}
In particular, $p^*_i=\lfloor \tau-\zeta_i\rfloor\geq 1$ by Lemma~\ref{lem:dmotif}(iii).

Then, for some $\tilde{K}>0$,
		\begin{align}
		\label{eq:intSh1}
		&\int_{1}^{\infty}\int_{x_{1}}^{\infty}\cdots \int_{x_{t-1}}^{\infty}\prod_{j \in[t]}x_j^{-\tau+\zeta_j}\prod_{i=t+1}^sh(i,\boldsymbol{x})\dd x_t\cdots \dd x_1\nonumber\\
		& \leq \tilde{K} \int_{1}^{\infty}\int_{x_{1}}^{\infty}\cdots \int_{x_{t-1}}^{\infty}\prod_{j \in[t]}x_j^{-\tau+\zeta_j}\prod_{i=t+1}^sh_{p^*_i}(i,\boldsymbol{x})\dd x_t\cdots \dd x_1.
		\end{align}
The above steps effectively perform the integrals over $x_i$ for $i\in \bar{U}$, and we are left with the integrals over $x_j$ for $j\in U$. It is here that we will rely on the fact that the optimization problem in \eqref{eq:maxeqsub} \cj{has a {\em unique} solution}. We start by rewriting the integral in \eqref{eq:intSh1} so that we can effectively use the uniqueness of \eqref{eq:maxeqsub}, for which we need to make the dependence on the various $x_j$ for $j\in U$ explicit. We start by introducing some notation to simplify this analysis.
\smallskip

Let $T_i{=\{j_q\colon q\in [p^*_i]\}}\subseteq U$ denote the set of neighbors of $i$ that appear in $h_{p^*_i}(i,\boldsymbol{x})$.
For all $j\in U$, let 
	\begin{align}
	\label{eq:Q}
		Q_j = \{i\in \bar{U}\colon \{i,j\}\in \Ecal_{S_3^*},j_{p_i^*}\geq j\}
	\end{align}
denote the set of neighbors $i\in\bar{U}$ of $j\in U$ such that $x_j$ appears in $h_{p^*_i}(i,\boldsymbol{x})$ (note that $i<j$ for all $i\in \bar{U}, j\in U$). Then,
	\begin{align}
	\label{eq:intcontrf}
	&{
	\prod_{j =1}^tx_j^{-\tau+\zeta_j}\prod_{i=t+1}^sh_{p^*_i}(i,\boldsymbol{x})}
	\\
	&= \tilde{K}
	\prod_{j=1}^{t}x_j^{-\tau+\zeta_j+|Q_j|}\prod_{i=t+1}^{s}x_{j_{p^*_i}}^{\tau-1-\zeta_i-d_{i,\bar{U}}-p^*_i,}\nonumber
	\end{align}
for some constant $\tilde{K}>0$. We now simplify the above integral.

Let $W_j=\{i\in \bar{U}\colon {x_{j_{p^*_i}}}=j\}$ for $j\in[t]$, so that $W_j$ denotes the set of neighbors $i$ of $j$ in $\bar{U}$ such that the factor {$x_j^{\tau-\zeta_j-p_i^*-d_{j,\bar{U}}}$} appears in ${h_{j_{p^*_i}}(i,\boldsymbol{x})}.$ Furthermore, let $\hat{W}_j=(V_1\cup S_1^*\cup [j]\cup \bar{U})\setminus W_j$. 
Then, by~\eqref{eq:zeta} {and the fact that $p_i^*=d_{i,[j_{p_i^*}]}$,} 
	\eqn{
	\sum_{i\in W_j}\zeta_i+d_{i,\bar{U}}+p^*_i=\sum_{i\in W_j}d_{i,V_1}+d_{i,S_1^*}+d_{i,\bar{U}}+p^*_i= 2E_{W_j}+E_{W_j,\hat{W}_j},
	}
where 
	\eqn{
	{E_{W_j}=\big|\big\{\{i,j\}\in \Ecal_{S_3^*}\colon i,j\in W_j\big\}}\big|
	}
denotes the number of edges inside $W_j$ and $E_{W_j,\hat{W}_j}$ denotes the number of edges in $S_3^*$ between $W_j$ and $\hat{W}_j$. As a result,~\eqref{eq:intSh1} becomes
	\begin{equation}\label{eq:intW}
	\begin{aligned}[b]
	\tilde{K}\int_{1}^{\infty} \int_{x_1}^{\infty}\cdots \int_{x_{t-1}}^{\infty}\prod_{j=1}^{t}x_j^{-\tau+\zeta_j+{|Q_j|}+(\tau-1)\abs{W_j}-2E_{W_j}-E_{W_j,\hat{W}_j}}\dd x_{t}\cdots \dd x_1.
	\end{aligned}
	\end{equation} 

We aim to perform the integrals one by one, starting with the integral over $x_t$, followed by $x_{t-1}$, etc. For this, we crucially use the uniqueness of \eqref{eq:maxeqsub} to show that 
	\begin{equation}
	\label{exp-smaller-min1}
	-\tau+\zeta_t+{|Q_t|}+(\tau-1)\abs{W_t}-2E_{W_t}-E_{W_t,\hat{W}_t}<-1,
	\end{equation}
so that the integral in~\eqref{eq:intW} over $x_t$ is finite. Indeed, note that 
	\begin{equation}
	Q_t=\{i\in\bar{U}\colon \{i,t\}\in\mathcal{E}_{S_3^*},j_{p^*_i}=t\}
	=\{i\in W_t\colon \{i,t\}\in\mathcal{E}_{S_3^*}\},
	\end{equation}
because $t$ is the maximal index in $U$, so that ${|Q_t|}=d_{t,W_t}$. Also, $\hat{W}_t=(V_1\cup S_1^*\cup S_3^*)\setminus W_t$ because $[t]\cup \bar{U}=S_3^*$.
\smallskip
	
Define $\hat{S}_2=\hat{S}_2^*\cup \{ t \}$, $\hat{S}_1=\hat{S}_1^*\cup {W}_t$ and $\hat{S}_3=S_3^*\setminus(W_t\cup \{t\})$. This gives 
	\begin{align}
	E_{\hat{S}_1}-E_{S_1^*}&=E_{W_t}+E_{W_t,S_1^*} \label{eq:comparesets1},\\
	E_{\hat{S}_1,\hat{S}_3}-E_{S_1^*,S_3^*}
	& =E_{W_t,S_3^*}-E_{W_t}-E_{W_t,S_1^*}-{|Q_t|}-d_{t,S_1^*},\\
	E_{\hat{S}_1,V_1}-E_{S_1^*,V_1}&=E_{W_t,V_1},\\
	E_{\hat{S}_2,V_1}-E_{S_2^*,V_1}&=d_{t,V_1}.\label{eq:comparesets4}		
	\end{align}
Because~\eqref{eq:maxeqsub} is uniquely optimized by $S_1^*$, $S_2^*$ and $S_3^*$, 
	\begin{equation}\label{eq:SShat}
	\begin{aligned}[b]
	&|\hat{S}_1|-|\hat{S}_2|-\frac{2E_{\hat{S}_1}+E_{\hat{S}_1,\hat{S}_3}+E_{\hat{S}_1,V_1}-E_{\hat{S}_2,V_1}}{\tau-1}\\
	&\quad  <|S_1^*|-|S_2^*|-\frac{2E_{S_1^*}+E_{S_1^*,S_3^*}+E_{S_1^*,V_1}-E_{S_2^*,V_1}}{\tau-1}.
	\end{aligned}
	\end{equation}
Using~\eqref{eq:comparesets1}-\eqref{eq:comparesets4}, this reduces to
	\begin{equation}\label{eq:maxcor}
	\abs{W_t}-1-\frac{2E_{W_t}+E_{W_t,\hat{W}_t}-|Q_t|-d_{t,S_1^*}-d_{t,V_1}}{\tau-1}<0,
	\end{equation}
which is equivalent to
	\begin{equation}
	-\tau+(\tau-1)\abs{W_t}+|Q_t|+d_{t,S_1^*}+d_{t,V_1}-2E_{W_t}-E_{W_t,\hat{W}_t}<-1,
	\end{equation}
which is \eqref{exp-smaller-min1}. Since $\zeta_t=d_{t,V_1}+d_{t,S_1^*}$ by \eqref{eq:zeta}, the inner integral of~\eqref{eq:intW} is finite. As $W_t$ and $W_{t-1}$ are disjoint, we obtain 
that the integral over $x_t$ can be evaluated as
	\begin{align}\label{eq:xtm1}
	& \int_{x_{t-1}}^{\infty}\prod_{j=t-1}^{t}x_j^{-\tau+\zeta_j+|Q_j|+(\tau-1)\abs{W_j}-2E_{W_j}-E_{W_j,\hat{W}_j}}\dd x_{t}\\
	& = 
	K x_{t-1}^{1-2\tau+\zeta_{t-1}+\zeta_t+{|Q_{t-1}|}+{|Q_{t}|}+(\tau-1)\abs{W_t\cup W_{t-1}}-2E_{W_t\cup W_{t-1}}-E_{W_t\cup W_{t-1},\widehat{W_t\cup W}_{t-1}}},\nonumber
	\end{align}
for some $K>0$, where $\widehat{W_t\cup W}_{t-1}=V_1\cup S^*_1\cup S_3^*\setminus(W_t\cup W_{t-1})$.
\medskip

We next repeat the above procedure to evaluate the integral over $x_{t-1}$. Choosing $\hat{S}_2=S_2^*\cup \{  t,t-1\}$, $\hat{S}_1=S_1^*\cup W_t\cup W_{t-1}$ and $\hat{S}_3={{S}_3^*}\setminus(W_t\cup W_{t-1}\cup \{t,t-1\})$, we can again use~\eqref{eq:SShat} to prove that the power of $x_{t-1}$ in~\eqref{eq:xtm1} is smaller than -1, so that integral~\eqref{eq:xtm1} over $x_{t-1}$ from $x_{t-2}$ to $\infty$ as in~\eqref{eq:intW} results in a power of $x_{t-2}$. We continue this process until we arrive at the integral over $x_1$ and show that this final integral is finite. 
%
\medskip

In general, fix $b\in [t-1]$. We let $Z_b=W_t\cup W_{t-1}\cup\cdots \cup W_{t-b}$ and $\hat{Z}=V_1\cup S_1^*\cup S_3^*\setminus Z_b$. Choosing $\hat{S}_2^{\sss(b)}=S_2^*\cup ([t]\setminus [t-b-1])$, $\hat{S}_1^{\sss(b)}=S_1^*\cup Z_b$ and $\hat{S}_3^{\sss(b)}={{S}_3^*}\setminus(Z_a\cup ([t]\setminus [t-b-1]))$ gives
	\begin{align}
	E_{\hat{S}_1^{\sss(b)}}-E_{S_1^*}&=E_{Z_a}+E_{Z_b,S_1^*} ,\\
	E_{\hat{S}_1^{\sss(b)},\hat{S}_3}-E_{S_1^*,S_3^*}
	& =E_{Z_b,S_3^*}-E_{Z_b}-E_{Z_b,S_1^*}-{|Q_t|}-\cdots -|Q_{t-b}|,\nonumber\\
	& \quad -d_{t,S_1^*}-\cdots-d_{t-b,S_1^*}\\
	E_{\hat{S}_1^{\sss(b)},V_1}-E_{S_1^*,V_1}&=E_{Z_b,V_1},\\
	E_{\hat{S}_2^{\sss(b)},V_1}-E_{S_2^*,V_1}&=d_{t,V_1}+d_{t-1,V_1}+\cdots +d_{t-b,V_1}.	
	\end{align}
Then,~\eqref{eq:SShat} reduces to
	\begin{align}
	\abs{Z_b}-b-1-\frac{2E_{Z_b}+E_{Z_b,\hat{Z}_b}-{|Q_t|}-\cdots -|Q_{t-b}|-\zeta_{t}-\cdots -\zeta_{t-b}}{\tau-1}<0,
	\end{align}
which is equivalent to
	\eqan{
	&-(b+1)\tau+b+(\tau-1)\abs{Z_b}+|Q_t|+\cdots +|Q_{t-b}|+\zeta_{t}\\
	&\qquad +\cdots +\zeta_{t-b}-2E_{Z_a}-E_{Z_b,\hat{Z}_b}<-1.\nonumber
}
This is precisely the exponent that appears in the variable $x_{t-b}$ when integrating for $x_{t-b}$ from $x_{t-b-1}$ to $\infty$ (as in~\eqref{eq:xtm1} for $b=1$). Thus indeed, evaluating the integrals in~\eqref{eq:intW} one by one does not result in diverging integrals at $\infty$ as their exponents are smaller than -1. 
Therefore~\eqref{eq:intW} is also finite, so that the claim in \eqref{eq:S3int} follows.
\end{proof}

\begin{proof}[Proof of Lemma \ref{lem:S1S2int}] We first argue that we may assume that $a=1$. Indeed, the integral with $a<1$ is upper bounded by that with $a=1$, while for $a>1$, we can do a change of variables and use that $\min(bx_ix_j,1)\leq b \min(x_ix_j,1)$ for all $b>1$. Thus, from now on, we will assume that $a=1$.
\smallskip

The proof that this integral is finite has a similar structure as the proof of Lemma~\ref{lem:S3int}, and we will be more concise here to avoid repetitions. Recall that $S_2^*=[t_2]$ and $S_1^*=[t_2+t_1]\setminus [t_2]$. Now, it will be convenient to order the $x_i$ for $i\in [t_2]$ such that $x_1>x_2>\cdots>x_{t_2}$, which we can do w.l.o.g. We first rewrite the integral as
	\begin{equation}
	\begin{aligned}[b]
	\int_0^{1}\cdots \int_{0}^{1}\prod_{j\in[t_2]}x_j^{-\tau+\zeta_j}\prod_{i=t_2+1}^{t_1+t_2}\tilde{h}(i,\boldsymbol{x})\dd x_{t_2}\cdots\dd x_1,
	\end{aligned}
	\end{equation}
where $\boldsymbol{x}=(x_j)_{j\in[t_2]}$ and, for ${i\in [t_1+t_2]\setminus [t_2]}$,
	\begin{equation}
	\tilde{h}(i,\boldsymbol{x})=\int_{0}^{\infty}x_i^{-\tau+\zeta_i}\prod_{j\in [t_2]\colon \{i,j\}\in \Ecal_{S_1^*,S_2^*}}\min(x_ix_j,1)\dd x_i.
	\end{equation}
Similarly to~\eqref{eq:intu}, suppose that vertex ${i\in S_1^*=[t_2+t_1]\setminus [t_2]}$ has vertices $j_1,j_2,\ldots,j_l$ as neighbors in ${S_2^*=[t_2]}$, where {$j_1<j_2<\cdots<j_l$, so that now $1>x_{j_1}>x_{j_2}>\cdots>x_{j_l}$. Note that $l=d_{i,S_2^*}$.}{} Then,
	\begin{align}\label{eq:hbound2}
	\tilde{h}(i,\boldsymbol{x})& = \int_{0}^{\infty}x_i^{-\tau+\zeta_i}\min(x_ix_{j_1},1)\min(x_ix_{j_2},1)\cdots\min(x_ix_{j_l},1)\dd x_i\nonumber\\
	& = \int_{1/x_{j_l}}^{\infty}x_i^{-\tau+\zeta_i}\dd x_i+\cdots+\int_{1/x_{j_2}}^{1/x_{j_3}}x_i^{-\tau+\zeta_i+l-1}x_{j_2}\cdots x_{j_l}\dd x_i\nonumber\\
	& \quad +\int_{0}^{1/x_{j_1}}x_i^{-\tau+\zeta_i+l}x_{j_1}\cdots x_{j_l}\dd x_i.
	\end{align}
Because $\zeta_i+l=\zeta_i+d_{i,S_2^*}=d_i\geq 2$ by \eqref{eq:zeta}, and $\zeta_i\leq 1$ by Lemma~\ref{lem:dmotif}(i), the first and the last integrals are finite. 
Computing the integrals yields that for some $C_1,\dots, C_l$,
		\begin{align}
		\tilde{h}(i,\boldsymbol{x})& =  C_lx_{j_l}^{\tau-\zeta_i-1}+\dots +C_2x_{j_2}^{\tau-\zeta_i-l+1}x_{j_3}\cdots x_{j_l}+C_1x_{j_1}^{\tau-\zeta_i-l}x_{j_2}\cdots x_{j_l}\nonumber\\
		& =: C_l\tilde{h}_l(i,\boldsymbol{x})+ C_{l-1}\tilde{h}_{l-1}(i,\boldsymbol{x})+\dots +  C_1\tilde{h}_1(i,\boldsymbol{x}).
		\end{align}
Similarly to the argument leading to~\eqref{eq:hstar}, for all $i\in[t_1+t_2]\setminus [t_2],$ there exists $p^*_i$ such that, for all $1>x_{v_1}>x_{v_2}>\cdots>x_{v_l}$,
	\begin{equation}
	\tilde{h}(i,\boldsymbol{x}) \leq K \tilde{h}_{p^*_i}(i,\boldsymbol{x}),
	\end{equation}
for some $K>0$. 
	Thus,
	\begin{align}
	&\int_0^{1}\int_{0}^{x_1}\cdots \int_{0}^{x_{t_2-1}}\prod_{j\in[t_2]}x_j^{-\tau+\zeta_j}\prod_{i=t_2+1}^{t_1+t_2}\tilde{h}(i,\boldsymbol{x})\dd x_{t_2}\cdots \dd x_1\nonumber\\
	& \leq K \int_0^{1}\int_{0}^{x_1}\!\!\cdots\!\! \int_{0}^{x_{t_2-1}}\!\!\prod_{j\in[t_2]}x_j^{-\tau+\zeta_j}\prod_{i=t_2+1}^{t_1+t_2}\tilde{h}_{p^*_i}(i,\boldsymbol{x})\dd x_{t_2}\cdots \dd x_1.
	\end{align}
Let $T_i{=\{j_q\colon q\geq p^*_i\}}$, so that $|T_i| \leq l$, denote the set of neighbors of $i$ whose terms appear in $h_{p^*_i}(i,\boldsymbol{x})$. Since $\zeta_i+l=\zeta_i+d_{i,S_2^*}=d_i\geq 2$ by \eqref{eq:zeta}, and $\zeta_i\leq 1$ by Lemma~\ref{lem:dmotif}(i), we have that $l\geq 1$, and therefore $|T_i|\geq 1$ for all $i{\in [t_2]}$. 
For $j\in S_2^*$, let
	\begin{equation}
	\label{eq:Ql2}
	Q_j=\{i\in S_1^*\colon \{i,j\}\in \Ecal_H, j_{p_i^*}\leq j\}
	\end{equation}
be the set of indices $i$ such that such that $x_j$ appears in $\tilde{h}_{p^*_i}(i,\boldsymbol{x})$.
	Then,
	\begin{equation}\label{eq:intbound2}
	\begin{aligned}[b]
	&
	\prod_{j\in[t_2]}x_j^{-\tau+\zeta_j}\prod_{i=t_2+1}^{t_1+t_2}\tilde{h}_{p^*_i}(i,\boldsymbol{x})\\
	& \leq \tilde{K} 
	\prod_{j\in[t_2]}x_j^{-\tau+\zeta_j+|Q_j|}\prod_{i=t_2+1}^{t_1+t_2}(1/x_{{j_{p^*_i}}})^{\tau-1-\zeta_i-{(l-p^*_i+1)}},
	\end{aligned}
	\end{equation}
for some $\tilde{K}>0$. 

Define $W_j=\{i\in S_1^*\colon {j_{p^*_i}}=j\}$ for $j\in S_2^*$ and let $\bar{W}_j=V_H \setminus (W_j\cup [j-1])$. 
Using that $\zeta_i=d_{i,V_1}+d_{i,S_1^*}+d_{i,S_3^*}$ for $i\in S_1^*$ by~\eqref{eq:zeta}, and that $l-p_i^*+1=d_{i,S_2^*\setminus [j_{p_i^*}-1]}$, ~\eqref{eq:intbound2} reduces to
	\begin{equation}\label{eq:intK}
	\tilde{K} 
	\prod_{j\in[t_2]}x_j^{-\tau+\zeta_j+|Q_j|+(\tau-1)|W_j|-2E_{W_j}-E_{W_j,\bar{W}_j}}.
	\end{equation} 
We set $\hat{S}_1=S_1^*\setminus W_{t_2}$, $\hat{S}_2=S_2^*\setminus \{t_2\}$ and $\hat{S}_3=S_3^*\cup W_{t_2}\cup \{t_2\}$. Notice that 
	\begin{align}
	E_{S_1^*}-E_{\hat{S}_1}&=E_{W_{t_2}}+E_{W_{t_2},S_1^*\setminus W_{t_2}},\\ E_{S_1^*,S_3^*}-E_{\hat{S}_1,\hat{S}_3}
	&=E_{W_{t_2},S_3^*}-d_{{t_2},S_1^*\setminus W_{t_2}}-E_{W_{t_2},S_1^*\setminus W_{t_2}},\\ 
	E_{S_1^*,V_1}-E_{\hat{S}_1,V_1}&=E_{W_{t_2},V_1}\\
	E_{S_2^*,V_1}-E_{\hat{S}_2,V_1}&={d_{t_2,V_1}}.
	\end{align} 
Because the optimal solution to~\eqref{eq:maxeqsub} is unique, we obtain using~\eqref{eq:SShat} that
	\begin{align}\label{eq:taubound1}
	&-\tau+(\tau-1)|W_{t_2}| -2E_{W_{t_2}}-E_{W_{t_2},S_1^*\setminus W_{t_2}}\nonumber\\
	& -E_{W_{t_2},S_3^*}-E_{W_{t_2},V_1}+d_{t_2,S_1^*\setminus W_{t_2}}+d_{t_2,V_1}>-1.
	\end{align}
Note that $\bar{W}_{t_2}=V_1\cup S_1^*\cup S_3^*\cup\{t_2\}\setminus W_{t_2}$ since $S_2^*=[t_2]$. Therefore, 
		\eqn{
		E_{W_{t_2},\bar{W}_{t_2}}=E_{W_{t_2},S_1^*\setminus W_{t_2}}+E_{W_{t_2},S_3^*}+d_{t_2,W_{t_2}}+E_{W_{t_2},V_1}.
		}
Using~\eqref{eq:taubound1} and that by~\eqref{eq:zeta} $\zeta_{t_2}=d_{t_2,V_1}$ then shows that
	\begin{equation}
	-\tau+(\tau-1)|W_{t_2}| -2E_{W_{t_2}}-E_{W_{t_2},\bar{W}_{t_2}}+E_{W_{t_2},S_2^*}+d_{t_2,S_1^*\setminus W_{t_2}}+\zeta_{t_2}>-1.
	\end{equation}
We then use that $d_{t_2,S_1^*\setminus W_{t_2}}+d_{t_2,W_{t_2}}=d_{t_2,S_1^*}$ to obtain
	\begin{equation}
	\label{exponent-xt2}
	-\tau+(\tau-1)|W_{t_2}| -2E_{W_{t_2}}-E_{W_{t_2},\bar{W}_{t_2}}+d_{t_2,S_1^*}+\zeta_{t_2}>-1.
	\end{equation}
 Finally, by~\eqref{eq:Ql2}, $|Q_{t_2}|=d_{t_2,S_1^*}$ as $t_2$ is the largest index in $S_1^*$.

This shows that the integral of~\eqref{eq:intK} over $x_{t_2}\in [0,x_{t_2-1})$ equals a power of $x_{t_2-1}$. A similar argument, setting $\hat{S}_1=S_1^*\setminus (W_{t_2}\cup W_{t_2-1})$ and $\hat{S}_2=S_2^*\setminus \{t_2,t_2-1\}$ shows that the integral of ~\eqref{eq:intK} over $x_{t_2-1}\in [0,x_{t_2-2})$ equals a power of $x_{t_2-2}$, and we can proceed to show that the outer integral of~\eqref{eq:intK} is finite. We conclude that~\eqref{eq:S1S2int} is finite.
\end{proof}


\section{Induced subgraphs}\label{sec:graphlets}
We now describe how to adapt the analysis of subgraphs to induced subgraphs.
For induced subgraphs we can define a similar optimization problem as~\eqref{eq:maxeqbeta}. When $\alpha_i+\alpha_j< 1$,~\eqref{eq:pijsmall} results in
	\begin{equation}
	\Probn{X_{v_i,v_j}=0}=\me^{-\Theta(n^{\alpha_i+\alpha_j-1})}(1+o(1))=1+o(1),
	\end{equation}
whereas for $\alpha_i+\alpha_j>1$,~\eqref{eq:pijlarge2} yields
	\begin{equation}
	\Probn{X_{v_i,v_j}=0}=o(1),
	\end{equation}
and for $\alpha_i+\alpha_j=1$~\eqref{eq:pijsmall} yields $\Probn{X_{v_i,v_j}=0}=\Theta(1)$. 
Similar to~\eqref{eq:phsub2}, we can write the probability that $H$ occurs as an induced subgraph on $\boldsymbol{v}=(v_1,\cdots,v_k)$ as
	\begin{equation}\label{eq:Ginduced}
	\begin{aligned}[b]
	&\Probn{\ECMnD|_{\boldsymbol{v}}=\Ecal_H} = \Theta_{\sss{\prob}}\bigg(\prod_{\{i,j\}\in \Ecal_H\colon \alpha_{i}+\alpha_{j}<1}\!\!\!\!\!\! n^{\alpha_{i}+\alpha_{j}-1}\prod_{\mathclap{\{i,j\}\notin \Ecal_H\colon 	\alpha_i+\alpha_j>1}} \me^{-n^{\alpha_i+\alpha_j-1}/2}\bigg).
	\end{aligned}
	\end{equation}
Similarly to~\eqref{eq:phsub2}, edges with $\alpha_i+\alpha_j=1$ do not contribute to the order of magnitude of~\eqref{eq:Ginduced}.
Thus, the probability that $H$ is an induced subgraph on $\boldsymbol{v}$ is stretched exponentially small in $n$ when two vertices $i$ and $j$ with $\alpha_i+\alpha_j>1$ are not connected in $H$. Then the corresponding optimization problem to~\eqref{eq:maxalph} for induced subgraphs becomes
	\begin{equation}\label{eq:maxeqalphaind}
	\begin{aligned}[b]
	& \max (1-\tau)\sum_{i}\alpha_i +\sum_{\{i,j\}\in \Ecal_H\colon \alpha_i+\alpha_j<1}\alpha_i+\alpha_j-1,\\
	&\qquad\text{s.t. }\alpha_i+\alpha_j\leq 1 \quad \forall \{i,j\}\notin \Ecal_H.
	\end{aligned}
	\end{equation} 

The following lemma shows that this optimization problem attains its optimum for very specific values of $\alpha$ (similarly to Lemma~\ref{lem:maxmotif} for subgraphs):
\begin{lemma}[Maximum contribution to induced subgraphs]\label{lem:maxgraphlet}
	Let $H$ be a connected graph on $k$ vertices. If the solution to~\eqref{eq:maxeqbetaind} is unique, then the optimal solution satisfies $\alpha_i\in\{0,\tfrac{\tau-2}{\tau-1},\tfrac{1}{2},\tfrac{1}{\tau-1}\}$ for all $i$. If it is not unique, then there exist at least 2 optimal solutions with $\alpha_i\in\{0,\tfrac{\tau-2}{\tau-1},\tfrac{1}{2},\tfrac{1}{\tau-1}\}$ for all $i$. In any optimal solution, $\alpha_i=0$ if and only if vertex $i$ has degree one in $H$.
\end{lemma}
\begin{proof}
	This proof is similar to the proof of Lemma~\ref{lem:maxmotif}. First, we again define $\beta_i=\alpha_i-\tfrac{1}{2}$, so that~\eqref{eq:maxeqalphaind} becomes
	\begin{equation}\label{eq:maxeqbetaind}
	\begin{aligned}[b]
	& \max \frac{1-\tau}{2}k+(1-\tau)\sum_{i}\beta_i +\sum_{\{i,j\}\in \Ecal_H\colon \beta_i+\beta_j<0}\beta_i+\beta_j,\\
	&\qquad \text{s.t. }\beta_i+\beta_j\leq 0 \quad \forall \{i,j\}\notin \Ecal_H.
	\end{aligned}
	\end{equation} 
	The proof of Step 1 from Lemma~\ref{lem:maxmotif} then also holds for induced subgraphs. Now we prove that if the optimal solution to~\eqref{eq:maxeqbetaind} is unique, it satisfies $\beta_i\in\{-\tfrac 12, \tfrac{\tau-3}{2(\tau-1)},0,\tfrac{3-\tau}{2(\tau-1)}\}$ for all $i$. We take $\tilde{\beta}$ as in~\eqref{eq:tildebeta}, and assume that $\tilde{\beta}<\tfrac{3-\tau}{2(\tau-1)}$. The contribution of the vertices with $\abs{\beta_i}=\tilde{\beta}$ is as in~\eqref{eq:Nbeta}. By increasing $\tilde{\beta}$ or by decreasing it to zero, the constraints on $\beta_i+\beta_j$ are still satisfied for all $\{i,j\}$. Thus, we can use the same argument as in Lemma~\ref{lem:maxmotif} to conclude that $\beta_i\in\{\tfrac{\tau-3}{2(\tau-1)},0,\tfrac{3-\tau}{2(\tau-1)}\}$ for all $i$ with $d_i\geq 2$. A similar argument as in Step 3 of Lemma~\ref{lem:maxmotif} shows that if the solution to~\eqref{eq:maxeqbetaind} is not unique, it can be transformed into two optimal solutions that satisfy $\beta_i\in\{-\tfrac 12, \tfrac{\tau-3}{2(\tau-1)},0,\tfrac{3-\tau}{2(\tau-1)}\}$ for all $i$ with degree at least 2. 
\end{proof}

Following the same lines as the proof of Theorem~\ref{thm:motifs}(ii) for subgraphs, Theorem~\ref{thm:motifs}(ii) for induced subgraphs follows, where we now use Lemma~\ref{lem:maxgraphlet} instead of~\ref{lem:maxmotif}.
We now state an equivalent lemma to Lemma~\ref{lem:convNH} for induced subgraphs:
\begin{lemma}[Convergence of major contribution to induced subgraphs]\label{lem:convNHind}
	Let $H$ be a connected graph on $k>2$ vertices such that~\eqref{eq:maxeqind} is uniquely optimized by $S_3^*=V_{H}$ with $B^\gind(H)=0$. Then,
	\begin{enumerate}[label={\upshape(\roman*)}]
		\item 	
		the number of induced subgraphs with vertices in $W_n^k(\varepsilon)$ satisfies\cs{
		\begin{align}
		\frac{N^\gind(H,W_n^k(\varepsilon))}{n^{\frac{k}{2}(3-\tau)}} = &  (1+\op(1))c^k\mu^{-\frac{k}{2}(\tau-1)} \int_{\varepsilon}^{1/\varepsilon}\!\!\cdots 
		\int_{\varepsilon}^{1/\varepsilon}(x_1\cdots x_k)^{-\tau}\nonumber\\
		&  \times \prod_{\mathclap{\{i,j\}\in \Ecal_H}}(1-\me^{-x_ix_j})\prod_{\mathclap{\{i,j\}\notin \Ecal_H}}\me^{-x_ix_j}\dd x_1\cdots \dd x_k +f_n(\varepsilon),
		\end{align}
		for some function $f_n(\varepsilon)$  such that for any $\delta>0$, 
		\begin{equation}
		\lim_{\varepsilon\searrow 0}\limsup_{n\to\infty}\Prob{f_n(\varepsilon)>\delta\mid J_n}=0.
		\end{equation}}
		\item
		$A^\gind(H)$ defined in~\eqref{eq:Aind} satisfies $A^\gind(H)<\infty$.
	\end{enumerate}
\end{lemma}
The proof of Theorem~\ref{thm:sqrtsub} for induced subgraphs is similar to the proof of Theorem~\ref{thm:sqrtsub} for subgraphs, using Lemma~\ref{lem:convNHind} instead of Lemma~\ref{lem:convNH}. The  proof of Lemma~\ref{lem:convNHind}(i) in turn follows from straightforward extensions of Lemmas~\ref{lem:condexsub}, \ref{lem:convsub} and~\ref{lem:varsub} to induced subgraphs, now also using that the probability that an edge $\{i,j\}\notin \Ecal_H$ is not present in the subgraph can be approximated by $\exp({-D_{v_i}D_{v_j}/L_n})$. Lemma~\ref{lem:convNHind}(ii) is an application of the following equivalent lemma to Lemma~\ref{lem:S3int} for $S_3^*=V_H$:

\begin{lemma}[{Induced subgraph integrals over $S_3^*$}]
\label{lem:S3intind} 
Suppose that the maximum in~\eqref{eq:maxeqind} is uniquely attained by ${\mathcal{P}}=(S_1^*,S_2^*,S_3^*)$ with $|S_3^*|=s>0$, and say that $S_3^*=[s]$.
Then
	\begin{equation}
	\label{eq:S3intind}
	\int_{0}^{\infty}\cdots \int_{0}^\infty \prod_{i \in [s]}x_i^{-\tau+\zeta_i}\prod_{\mathclap{\{i,j\}\in \Ecal_{S_3^*}}}\min(x_ix_j,1)\prod_{\mathclap{\{i,j\}\notin \Ecal_{S_3^*}}}\me^{-x_ix_j}\dd x_s\cdots\dd x_1<\infty.
	\end{equation} 
\end{lemma}

\begin{proof} The proof follows that of Lemma~\ref{lem:S3int}, where now we need to rely on the uniqueness of \eqref{eq:maxeqind} instead of that of \eqref{eq:maxeqsub}, and we obtain extra factors $\me^{-x_ix_j}$ for all $\{i,j\}\notin \Ecal_H$. We will therefore be more brief, and focus on the differences compared to the proof of Lemma~\ref{lem:S3int}.

This integral is finite if 
	\begin{equation}\label{eq:S3intind2}
	\int_{0}^{\infty}\cdots \int_{0}^\infty \prod_{\mathclap{i \in [s]}}x_i^{-\tau+\zeta_i} \ \prod_{\mathclap{\{i,j\}\in \Ecal_{S_3^*}}} \ \min(x_ix_j,1)
	\prod_{\mathclap{\{i,j\}\notin \Ecal_{S_3^*}}} \ \ind{x_ix_j<1}\dd x_s\cdots\dd x_1<\infty,
	\end{equation}
since if 
	\begin{equation}
	\int_{a}^{b}\int_{0}^{1/x_1}x_1^{\gamma_1}x_2^{\gamma_2}\me^{-x_1x_2}\dd x_2\dd x_1<\infty,
	\end{equation} 
then also
	\begin{equation}
	\int_{a}^{b}\int_{1/x_1}^{\infty}x_1^{\gamma_1}x_2^{\gamma_2}\me^{-x_1x_2}\dd x_2\dd x_1<\infty.
	\end{equation}
We can show similarly to~\eqref{eq:intxlarge} and~\eqref{eq:intxsmall} that the integral is finite when all integrands are larger than one, or when all are smaller than one. We compute the contribution to~\eqref{eq:S3intind2} where the integrand runs from 1 to $\infty$ for vertices in some nonempty set $U$, and from 0 to 1 for vertices in $\bar{U}=S_3^*\setminus{U}$. W.l.o.g., assume that $U={[t]}$ for some $1\leq t< s$. Define, for $i\in \bar{U}$,
	\begin{equation}
	\hat{h}(i,\boldsymbol{x})=\int_{0}^{1}x_i^{-\tau+\zeta_i+d_{i,\bar{U}}}\prod_{\mathclap{j\in U\colon \{i,j\}\in \Ecal_{S_3^*}}} \ \min(x_ix_j,1)\prod_{j\in U\colon \{i,j\}\notin \Ecal_{S_3^*}}\ind{x_ix_j<1}\dd x_i.
	\end{equation}
Then~\eqref{eq:S3intind} results in
	\begin{equation}\label{eq:intSind}
	\int_{1}^{\infty}\cdots \int_{1}^{\infty}\prod_{p \in [t]}x_p^{-\tau+\zeta_j}\ \prod_{\mathclap{i,j\in U\colon \{i,j\}\notin \Ecal_{S_3^*}}} \ \ind{x_ix_j<1}
	\prod_{i=t+1}^k\hat{h}(i,\boldsymbol{x})\dd x_t\cdots \dd x_1.
	\end{equation}
When the induced subgraph of $H$ formed by the vertices of $U$ is not a complete graph, this integral equals zero, as $\mathbbm{1}_{\{x_ix_j<1\}}=0$ when $i,j\in U$. Thus, we assume that the induced subgraph of $H$ formed by the vertices of $U$ is a complete graph so that $\{\{i,j\}\in U\colon \{i,j\}\notin \Ecal_{H}\}=\varnothing$.
	
We first bound the region of~\eqref{eq:S3intind} where $1<x_1<\cdots<x_t$. When $i\in\bar{U}$ is connected to all vertices in $U$, $\hat{h}(i,\boldsymbol{x})$ equals $h(i,\boldsymbol{x})$ defined in~\eqref{eq:h}, which can be bounded by~\eqref{eq:hstar}. Otherwise, define
	\begin{equation}\label{eq:ai}
	a_i=\max\{j\in[t]\colon \{i,j\}\notin \Ecal_H\}.
	\end{equation}
Thus, $i$ is connected to vertices ${[t]\setminus [a_i]}$ and we can write $\hat{h}(i,\boldsymbol{x})$ as
	\begin{align}\label{eq:hia}
	\hat{h}(i,\boldsymbol{x})& = \int_{0}^{1/x_t}x_i^{-\tau+\zeta_i+d_{i,S_3^*}}\dd x_i\cdot x_t\cdots x_{a_i+1}\prod_{j \in [a_i]:\{i,j\}\in \Ecal_H}x_j\nonumber\\
	& + \int_{1/x_t}^{1/x_{t-1}}x_i^{-\tau-1+\zeta_i+d_{i,S_3^*}}\dd x_i\cdot x_{t-1}\cdots x_{a_i+1}\prod_{j \in [a_i]:\{i,j\}\in \Ecal_H}x_j	+\cdots  \nonumber\\
	& + \int_{1/x_{a_i+1}}^{1/x_{a_i}}x_i^{-\tau+\zeta_i+d_{i,S_3^*}-t+a_i}\dd x_i\prod_{j \in [a_i]:\{i,j\}\in \Ecal_H}x_j.
	\end{align}
By a similar argument as in Lemma~\ref{lem:dmotif}, $\zeta_i+d_{i,S_3^*}\geq 2$ for $i\in S_3^*$ so that the first integral is finite. Thus, for some constants $C_t,\dots,C_{a_i},$
	\begin{align}
	\hat{h}(i,\boldsymbol{x}) =\!\!\prod_{j \in [a_i]\colon 
	\{i,j\}\in \Ecal_H}\!\!\!\!& x_j \Big( C_tx_t^{\tau-\zeta_i-d_{i,S_3^*}}x_{t-1}\cdots x_{a_i+1}+ C_{t-1}x_{t-1}^{\tau-\zeta_i-d_{i,S_3^*}+1}x_{t-2}\cdots x_{a_i+1}\nonumber\\
	& +\cdots+  C_{a_i}x_{a_i}^{\tau-\zeta_i-d_{i,S_3^*}+t-a_i} \Big)\nonumber\\
	& =: \hat{h}_t(i,\boldsymbol{x})+\cdots+\hat{h}_{a_i}(i,\boldsymbol{x}).
	\end{align}
As in~\eqref{eq:hstar}, for every $i$ we can find a $p^*_i$ such that, for all $1>x_1>\cdots>x_t$,
	\begin{equation}
	\hat{h}(i,\boldsymbol{x})\leq Kh_{p^*_i}(i,\boldsymbol{x})
	\end{equation}
for some $K>0$. Again, let $T_i$ denote the set of neighbors of vertex $i$ appearing in $h_{p^*_i}(i,\boldsymbol{x})$, and $Q_j$ as in~\eqref{eq:Q}, and let ${W_j=\{i\in\bar{U}\colon j_{p^*_i}=j\}}$.
Then,
	\begin{equation}
	\begin{aligned}[b]
	&\int_{1}^{\infty}\cdots \int_{x_{t-1}}^{\infty}\prod_{j \in [t]}x_j^{-\tau+\zeta_j} \ind{x_ix_j<1}\prod_{i=t+1}^k\hat{h}(i,\boldsymbol{x})\dd x_t\cdots \dd x_1\\
	&\leq \tilde{K}\int_{1}^{\infty}\cdots \int_{x_{t-1}}^{\infty}\prod_{j \in [t]}x_j^{-\tau+\zeta_j}\prod_{i=t+1}^k\hat{h}_{p^*_i}(i,\boldsymbol{x})\dd x_t\cdots \dd x_1\\
	& \leq \int_{1}^{\infty} \int_{x_1}^{\infty}\cdots \int_{x_{t-1}}^{\infty}\prod_{j \in [t]}x_j^{-\tau+\zeta_j+|Q_j|+(\tau-1)\abs{W_j}-2E_{W_j}-E_{W_j,\hat{W}_j}}\dd x_{t}\cdots \dd x_1
	\end{aligned}
	\end{equation}
for some $\tilde{K}>0$, where $\hat{W}_j=(V_1\cup S_1^*\cup [j]\cup \bar{U})\setminus W_j$. We can now show that the integral over $x_t$ is finite in a similar manner as in Lemma~\ref{lem:S3int}. 
\medskip
	
Indeed, we will now again use the uniqueness of the solution of the optimization problem in \eqref{eq:maxeqind} to again prove that this integral is finite. For this, we 
define $\hat{S}_1=S_1^*\cup W_t$, $\hat{S}_2=S_2^*\cup\{t\}$ and $\hat{S}_3=S_3^*\setminus (W_t\cup\{t\})$. Because $t\in S_3^*$, by constraint~\eqref{eq:maxeqind}, $t$ is connected to all other vertices in $\hat{S}_2$, so that the vertices of $\hat{S}_2$ still form a complete graph. Furthermore, $t\in U$, so that $t$ is connected to all other vertices in $U$, since the vertices of $U$ formed a complete graph. Also, when $i\in \bar{U}$ is not connected to $t$, then $i\in W_t$ by~\eqref{eq:hia} and the definition of $a_i$ in~\eqref{eq:ai}. Thus, $t$ is connected to all vertices in $U\cup \bar{U}\setminus (W_t\cup \{t\})=\hat{S}_3$.
	
We conclude that $\hat{S}_1$, $\hat{S}_2$ and $\hat{S}_3$ still satisfy the constraint in~\eqref{eq:maxeqind}, and we may proceed as in Lemma~\ref{lem:S3int} using ~\eqref{eq:SShat} to show that the integral over $x_t$ finite. Iterating this proves Lemma~\ref{lem:S3intind}.
\end{proof}  

The following lemma is the counterpart of Lemma~\ref{lem:S1S2int} for induced subgraphs:
\begin{lemma}[{Induced subgraph integrals over $S_1^*\cup S_2^*$}]
\label{lem:S1S2intind}
Suppose the optimal solution to~\eqref{eq:maxeqind} is unique, and attained by ${\mathcal{P}}=(S_1^*,S_2^*,S_3^*)$. Say that $S_2^*=[t_2]$ and $S_1^*=[t_2+t_1]\setminus [t_2]$. Then, for every $a>0$,
	\begin{equation}\label{eq:S1S2intind}
	\begin{aligned}[b]
	\int_{0}^{a}\cdots \int_0^a\int_0^\infty\cdots\int_0^\infty& \prod_{\mathclap{j\in[t_1+t_2]}}x_j^{-\tau+\zeta_j} \ \prod_{\mathclap{\{i,j\}\in \Ecal_{S_1^*,S_2^*}}}\min(x_ix_j,1) \\
	&  \times \prod_{\mathclap{\{i,j\}\notin \Ecal_{S_1^*,S_2^*}}}\me^{-x_ix_j}\dd x_{t_1+t_2}\cdots \dd x_1<\infty .
	\end{aligned}
	\end{equation}
\end{lemma}

\begin{proof} This lemma can be proven along similar lines as Lemma~\ref{lem:S1S2int}, with similar adjustments as the adjustments to prove Lemma~\ref{lem:S3intind} for induced subgraphs from its counterpart for subgraphs, Lemma~\ref{lem:S3int}.
\end{proof}
From these lemmas, the proof of Theorem~\ref{thm:motifs}(i) for induced subgraphs follows along the same lines as the proof of Theorem~\ref{thm:motifs}(i) for subgraphs.


\bibliographystyle{imsart-number}
\DeclareRobustCommand{\VAN}[3]{#3}

\end{document}